%% file: main.tex
\documentclass[pdflatex,sn-aps]{sn-jnl}

\usepackage{graphicx}%
\usepackage{multirow}%
\usepackage{amsmath,amssymb,amsfonts}%
\usepackage{amsthm}%
\usepackage{mathrsfs}%
\usepackage[title]{appendix}%
\usepackage{xcolor}%
\usepackage{textcomp}%
\usepackage{manyfoot}%
\usepackage{booktabs}%
\usepackage{algorithm}%
\usepackage{algorithmic}%
\usepackage{listings}%

\usepackage[english]{babel}
\usepackage{float}
\usepackage{graphicx}
\usepackage{subcaption}
\usepackage{enumerate}
\usepackage{booktabs}
\usepackage{dsfont}
\usepackage{comment}
\usepackage{bm}
\usepackage{microtype}

\usepackage{todonotes}
\usepackage{tablefootnote}

\usepackage{tikz}
\usepackage{pgfplots}
\pgfplotsset{minor tick style={draw=none}}
\usepackage{caption}
\usepackage{float}

\allowdisplaybreaks[3]

\newtheorem{lemma}{Lemma}[section]
\newtheorem{definition}{Definition}[section]

\newtheorem{theorem}{Theorem}[section]

\newtheorem{proposition}{Proposition}[section]
\newtheorem{remark}{Remark}[section]

\newtheorem{assumption}{Assumption}[section]

\numberwithin{equation}{section}

\def\R{{\mathbb{R}}}
\def\B{{\mathbb{B}}}

\def\E{{\mathbb{E}}}

\def\argmin{\mathop{\rm arg\,min}}
\def\Argmin{\mathop{\rm Arg\,min}}

\def\C1loc{{C^{1,1}_{\rm loc}(\R^d)}}
\def\C1{{C^{1,1}(\R^d)}}
\def\suppf{{\frak s}}

\definecolor{cb-black}      {RGB}{  0,   0,   0}
\definecolor{cb-blue-green} {RGB}{  0,  073,  073}
\definecolor{cb-green-sea}  {RGB}{  0, 146, 146}
\definecolor{cb-rose}       {RGB}{255, 109, 182}
\definecolor{cb-salmon-pink}{RGB}{255, 182, 119}
\definecolor{cb-purple}     {RGB}{ 73,   0, 146}
\definecolor{cb-blue}       {RGB}{ 0, 109, 219}
\definecolor{cb-lilac}      {RGB}{182, 109, 255}
\definecolor{cb-blue-sky}   {RGB}{109, 182, 255}
\definecolor{cb-blue-light} {RGB}{182, 219, 255}
\definecolor{cb-burgundy}   {RGB}{146,   0,   0}
\definecolor{cb-brown}      {RGB}{146,  73,   0}
\definecolor{cb-clay}       {RGB}{219, 209,   0}
\definecolor{cb-green-lime} {RGB}{ 36, 255,  36}
\definecolor{cb-yellow}     {RGB}{255, 255, 109}

\begin{document}

\title[A Gaussian smoothing-based zeroth-order method]{A Gaussian smoothing-based zeroth-order method for Goldstein second-order stationarity}


\author[1]{Ming Lei}\email{leim@cuit.edu.cn}

\author[2]{Ting Kei Pong}\email{tk.pong@polyu.edu.hk}

\author[2]{Man-Chung Yue}\email{manchung.yue@polyu.edu.hk}

\author*[3]{Liaoyuan Zeng}\email{zengly@zjut.edu.cn}

\author[2]{Hao Zhang}\email{haaoo.zhang@connect.polyu.hk}

\affil[1]{Chengdu University of Information Technology, Chengdu, People's Republic of China}

\affil[2]{The Hong Kong Polytechnic University, Hong Kong, People’s Republic of China}

\affil[3]{Zhejiang University of Technology, Hangzhou, People’s Republic of China}

\abstract{We introduce a new generalized Hessian, called the Goldstein second-order $\delta$-subdifferential, and an associated notion of $(\epsilon_1,\epsilon_2,\delta)$-second-order stationary point for continuously differentiable functions with locally Lipschitz gradients. We propose a zeroth-order algorithm based on cubic regularization and Gaussian smoothing with homotopy to find such approximate second-order stationary points for Lipschitz differentiable functions, and derive the iteration complexity under a mild coercivity-type assumption on the objective function.}

\keywords{Zeroth-order method, second-order Goldstein stationarity, Gaussian smoothing, Gaussian homotopy method, complexity}



\maketitle

\input{Second_order_GS_Vsigma.tex}

\noindent {\bf Acknowledgements.} The works of the second and fifth authors are supported partly
by a Research Scheme of the Research Grants Council of Hong Kong SAR, China (project T22-504/21R) and a PolyU internal grant 4-ZZV9. Man-Chung Yue is partially supported by the Hong Kong Research Grants Council under the GRF project~17309423. Liaoyuan Zeng was supported in part by Zhejiang Provincial Natural Science Foundation of China under Grant No.
LMS25A010004 and the National Natural Science Foundation of China under
Grant No.
12201389.

\medskip
\noindent {\bf Data availability.} There are no data associated with this paper.

\medskip
\noindent {\bf Competing interests.} The second author is a member of the editorial board of this journal.

\end{document}

%% file: Second_order_GS_Vsigma.tex
\section{Introduction}		
Zeroth-order optimization deals with optimization problems in which the objective function can only be queried through a zeroth-order oracle that returns its value at a prescribed point. Such problems are encountered in many contemporary applications, including the training and fine-tuning of neural networks~\cite{MalladiMeZO23, ChenZOO17}, simulation-based optimization~\cite{ConnSheinbergVicente09,AmaranSimOpt16}, and policy optimization in reinforcement learning~\cite{SalimansES17}. For a more comprehensive treatment of classical and recent developments in this area, we refer the readers to the monograph \cite{ConnSheinbergVicente09} and the survey \cite{LarsonMenickellyiWild19}.

Zeroth-order methods have well-developed complexity guarantees for obtaining first-order stationary points when the objective function is smooth \cite{ConnSheinbergVicente09,LarsonMenickellyiWild19, JongeneelYueKuhn24}. However, many contemporary applications give rise to nonsmooth objectives, including the training of neural networks with non-differentiable activation functions~\cite{MalladiMeZO23}, $\ell_1$-regularized formulations arising in sparse learning and signal processing~\cite{HuangGuHuoChenHuang19}, and $H_\infty$ control problems~\cite{GuoKeivanDullerudSeilerHu23}. This has motivated a growing line of work on zeroth-order methods for nonsmooth, locally Lipschitz objectives. Nesterov and Spokoiny~\cite{NesterovGS} developed the first zeroth-order method with complexity guarantee for nonsmooth Lipschitz objectives based on Gaussian smoothing (GS), and derived complexity for obtaining a point at which the GS function has small gradient. Since the GS gradient does not directly correspond to a generalized derivative of the original objective, subsequent works \cite{LinZHengJordan22,Pong2024,XiaLinDeng25} have instead focused on deriving complexity guarantees with respect to the notion of Goldstein $(\epsilon,\delta)$-stationarity, which is defined in terms of the Goldstein $\delta$-subdifferential \cite{Goldstein1977} of the objective itself.

Most existing zeroth-order methods for nonconvex optimization come with only first-order guarantees. First-order stationarity is, however, often insufficient: for many smooth nonconvex problems arising in modern applications, such as phase retrieval \cite{LiuYueSo17,SunQuWright18}, low-rank matrix recovery \cite{Bhojanapalli16,GeLeeMa16,YueZhouSo19} and dictionary learning \cite{SunQuWright17}, it has been shown that all second-order stationary points are global minimizers, while first-order stationary points need not be. Very recently, analogous landscape results have been established for some nonsmooth nonconvex problems as well \cite{GuanSo24,McRae25}. These developments have motivated a rich line of work on first- and second-order methods for finding approximate second-order stationary points of smooth \cite{NesPol06,CartisGouldToint12,JinNetrapalliGeKakadeJordan17} and nonsmooth \cite{DavisDiazDru22,HuangZhu23,DavisDruJiang26} nonconvex objectives; here, we need to point out that the notions of approximate second-order stationarity adopted in \cite{DavisDiazDru22,HuangZhu23,DavisDruJiang26} and the corresponding first- and second-order algorithms rely on additional structural assumptions such as weak convexity, partial smoothness or composite structure.
Complexity guarantees for finding approximate second-order stationary points have also been obtained for zeroth-order methods \cite{Bala2022,VlatakisGkaragkounisFlokasPiliouras19,RenTangLi23}, but only under the assumption that the objective is twice continuously differentiable with Lipschitz Hessian. To the best of our knowledge, no analogous result is available for nonsmooth objectives, and not even for functions that are differentiable but not twice differentiable. This raises a natural question: is there a meaningful notion of approximate second-order stationarity for nonsmooth (or, less ambitiously, non-twice-differentiable) nonconvex problems, and can one develop a zeroth-order method with a complexity guarantee for finding such points?

Two challenges need to be addressed when answering this question. First, in the absence of additional structures, formulating a meaningful notion of approximate second-order stationarity in the absence of twice differentiability is not obvious. The classical condition $\nabla^2 f(\bar x) \succeq 0$ has to be replaced by a suitable analog involving a generalized Hessian. Moreover, even with such an analog at hand, checking the second-order condition is plausibly intractable, as evidenced by hardness results in the related setting of smooth constrained optimization~\cite{MurtyKabadi87,NouiehedLeeRazaviyayn18}. One needs a tractable relaxation analogous to how the Goldstein $\delta$-subdifferential serves as a tractable surrogate for the Clarke subdifferential in the first-order setting. Second, the class of nonsmooth functions to be considered must be chosen so that the resulting notion of second-order stationarity is well-behaved. Recall that locally Lipschitz functions are the natural class for the first-order Goldstein framework, because their Clarke subderivatives are sublinear and locally bounded above, ensuring that the Clarke subdifferential is a nonempty, compact, convex set. It is therefore natural to seek a class of functions whose generalized second-order derivative possesses the same structural properties. By \cite[Theorem~1]{Ioffe02}, the generalized second-order derivative of $f$ (in the sense of \cite[Definition~1.1]{Cominetti90}; see also \cite[Definition~1]{Ioffe02}) is bi-sublinear and locally bounded above precisely when $f$ is continuously differentiable with locally Lipschitz gradient, i.e., $f \in C^{1,1}(\R^d)$. This characterization singles out $C^{1,1}(\R^d)$ as the first natural class of (differentiable but not necessarily twice differentiable) functions for us to develop a Goldstein-style notion of second-order stationarity. 

In view of the above discussion, in this paper we introduce a new notion of approximate second-order stationary point for functions in $C^{1,1}(\R^d)$, and develop and analyze a zeroth-order algorithm for finding such a point. Our contributions are summarized as follows.

\begin{itemize}
\item We introduce a new notion of generalized Hessian, which we call the Goldstein second-order $\delta$-subdifferential and denote by $\partial^2_{G,\delta} f$, and the associated notion of $(\epsilon_1, \epsilon_2, \delta)$-second-order stationary point. The construction is a natural second-order analog of the Goldstein $\delta$-subdifferential and the Goldstein $(\epsilon, \delta)$-stationary point, and gives rise to a tractable surrogate for the classical second-order optimality condition on $C^{1,1}(\R^d)$.

\item We establish a connection between the Hessian of the Gaussian smoothing $f_\sigma$ of $f$ and the Goldstein second-order $\delta$-subdifferential of $f$ when $f$ is Lipschitz differentiable. Specifically, we show that $\nabla^2 f_\sigma$ is well-defined whenever $\nabla f$ is Lipschitz, and that for sufficiently small $\sigma$ (with an explicit upper bound depending on $\epsilon$ and $\delta$), the minimum eigenvalue of $\nabla^2 f_\sigma$ provides a meaningful lower bound for $\inf_{\|h\|=1}\suppf(h,\partial^2_{G,\delta}f(\cdot)h)$; here, ${\frak s}(\cdot,\Omega)$ is the support function of the set $\Omega$. We should note that our representation formula for $\nabla^2 f_\sigma$ only requires $f$ to have a Lipschitz continuous gradient, weakening the Lipschitz Hessian assumption used in \cite{Bala2022}.

\item We propose a zeroth-order algorithm based on Gaussian smoothing and cubic regularization for finding an $(\epsilon_1, \epsilon_2, \delta)$-second-order stationary point of a Lipschitz differentiable function $f$. Our algorithm features {\em controlled increasing} sample sizes for approximating $\nabla f_\sigma$ and $\nabla^2 f_\sigma$ at each iteration, and a vanishing sequence $\{\sigma_k\}$ of smoothing parameters, making our approach an instance of homotopy method. Unlike the cubic-regularization-based zeroth-order method in \cite{Bala2022}, our algorithm does not require users to specify a tolerance in advance.
We analyze the iteration complexity of our algorithm, and derive the complexity guarantee upon assuming a mild coercivity-type condition on $f$.
\end{itemize}

\section{Notation and preliminaries}
Throughout this paper, we let $\mathbb{R}^{d}$ denote the Euclidean space of dimension $d$, and $\mathbb{N}$ denote the set of all natural numbers. For an $x\in \R^d$, we let $\|x\|$ denote its Euclidean norm, and use $\mathbb{B}(x,r)$ to denote the closed ball with center $x$ and radius $r\geq 0$. We use $\B_r$ to denote $\mathbb{B}(0,r)$. We let $I$ denote the $d\times d$ identity matrix, and its $j$-th column is denoted by $e_j$. Finally, for a matrix $M\in \mathbb{R}^{d\times d}$, we let $\|M\|$ denote its operator norm and $\|M\|_F := \sqrt{{\rm trace}(M^TM)}$.

We let $\C1$ denote the class of functions that are continuously differentiable with locally Lipschitz gradients.
For an $f\in \C1$, its generalized Hessian at $x\in \mathbb{R}^d$ is defined as
\begin{align}\label{def_Hess}
 \!\!\!\!\partial^2_Hf(x) \!:=\! {\rm conv}\{M: \exists x^{k}\!\rightarrow\! x \!\text{ s.t. $f$ is twice differentiable at }x^{k} \,\&\, \nabla^{2}f(x^{k})\!\rightarrow\! M  \}.
\end{align}
One can show that  the set $\partial_{H}^{2}f(x)$ is compact, and its elements are symmetric matrices thanks to \cite[Theorem~8.12.2]{JDieudonne69}. Moreover, from \cite[Section 1]{Pa96} (see also \cite[Section 2]{Hu84}), we know that
\begin{equation}\label{Equality}
\partial_{H}^{2}f(x) = \partial(\nabla f)(x) \ \ \ \forall x\in \mathbb{R}^d,
\end{equation}
where $\partial(\nabla f)(x)$ is the Clarke Jacobian of $\nabla f$ at $x$. In particular, we see from \cite[Proposition~2.6.2]{Clarke1983} that $x\mapsto \partial_H^2f(x)$ is upper semicontinuous in the sense of \cite[Proposition~2.6.2(c)]{Clarke1983}, and is locally uniformly bounded.
We say that $\bar x$ is a (first-order) stationary point of $f$ if $\nabla f(\bar x) = 0$, and we say that $\bar x$ is a second-order stationary point if
\begin{equation}\label{2nd-order_stationary}
\nabla f(\bar x) = 0\ \ {\rm and}\ \ \inf_{\|h\|=1}\suppf(h,\partial^2_Hf(\bar x)h) \ge 0;
\end{equation}
here, $\partial^2_Hf(\bar x)h := \{Mh:\; M\in \partial^2_Hf(\bar x)\}$ and ${\frak s}(\cdot,\Omega)$ is the support function of the set $\Omega$. It can be shown that any local minimizer of $f$ is a second-order stationary point; see \cite[Theorem 3.1]{Hu84}.

We next recall the definition of Gaussian smoothing (GS).
	\begin{definition}[{\cite[Section 2]{NesterovGS}}]\label{def_GSfunction}
		Let $\sigma > 0$. For a measurable function $f:\R^d\to \R$, we define its GS function as
		\[
		f_\sigma(x) = \mathbb{E}_{u\sim\mathcal{N}(0,I)}[f(x + \sigma u)],
		\]
		where $\mathcal{N}(0,I)$ denotes the $d$-dimensional standard Gaussian distribution. 	
	\end{definition}

The expectation in Definition~\ref{def_GSfunction} may not be well defined for general measurable function, but it is routine to show that such an expectation is well defined when $f$ has Lipschitz gradient. More importantly, the GS of a Lipschitz differentiable function is continuously differentiable and admits a ``derivative-free" expectation formula, as we recall in the next lemma.
\begin{lemma}[{\cite[Eq.~(21)]{NesterovGS}},{\cite[Theorem~3.3]{Pong2024}}]\label{lem053101}
Let $f$ be Lipschitz differentiable and $\sigma > 0$. Then its GS function $f_\sigma$ given in Definition \ref{def_GSfunction} is well defined. Moreover, the gradient of $f_\sigma$ is given by
	\begin{equation}\label{GSgradient}
		\nabla f_\sigma(x) = \frac1\sigma\mathbb{E}_{u\sim\mathcal{N}(0,I)}[f(x + \sigma u)u]
	\end{equation}
and is well defined and continuous.
\end{lemma}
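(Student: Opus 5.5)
We need to establish three things: $f_\sigma$ is well defined, $\nabla f_\sigma$ is given by \eqref{GSgradient}, and that gradient is continuous. The plan rests on the quadratic growth bound available for a Lipschitz differentiable function combined with standard Gaussian integration by parts. Throughout, let $L$ be the Lipschitz constant of $\nabla f$.

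\emph{Step 1: well-definedness.} From the descent lemma we have
\[
|f(y)| \le |f(x)| + \|\nabla f(x)\|\,\|y-x\| + \tfrac{L}{2}\|y-x\|^2 .
\]
Taking $y = x+\sigma u$ gives $|f(x+\sigma u)| \le a + b\|u\| + c\|u\|^2$ with constants depending on $x$ and $\sigma$. The right-hand side is integrable against the standard Gaussian density $\phi(u) = (2\pi)^{-d/2}e^{-\|u\|^2/2}$. The same bound times $\|u\|$ is also integrable, so both $f_\sigma(x)$ and the expectation in \eqref{GSgradient} are finite.

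\emph{Step 2: the gradient formula.} First change variables $y = x+\sigma u$:
\[
f_\sigma(x) = \sigma^{-d}\int f(y)\,\phi\big((y-x)/\sigma\big)\,dy .
\]
Now $x$ appears only in the smooth Gaussian kernel, whose $x$-gradient is $\sigma^{-d}\phi((y-x)/\sigma)(y-x)/\sigma^2$. To differentiate under the integral sign, restrict $x$ to a ball $\B(x_0,1)$. On that ball the kernel's gradient is dominated by $C(1+\|y\|)\,e^{-\|y-x_0\|^2/(4\sigma^2)}$. This uses the elementary bound $\|y-x\|^2 \ge \tfrac12\|y-x_0\|^2 - \|x-x_0\|^2$. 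Multiplying by the quadratic growth of $|f(y)|$ still gives an integrable majorant. The dominated convergence theorem, applied through the mean value theorem, then yields
\[
\nabla f_\sigma(x) = \sigma^{-d}\int f(y)\,\phi\big((y-x)/\sigma\big)\,\frac{y-x}{\sigma^2}\,dy ,
\]
and changing variables back gives \eqref{GSgradient}.

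\emph{Step 3: continuity.} The integrand $x \mapsto f(y)\phi((y-x)/\sigma)(y-x)/\sigma^2$ is continuous in $x$ for each fixed $y$. It is dominated on $\B(x_0,1)$ by the same integrable majorant as in Step 2. Dominated convergence therefore makes $\nabla f_\sigma$ continuous at $x_0$.

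An alternative route to continuity is to note that $\nabla f_\sigma(x) = \E[\nabla f(x+\sigma u)]$, which follows by differentiating directly and using the Lipschitz continuity of $\nabla f$. This expression is even Lipschitz in $x$ with constant $L$. Gaussian integration by parts, $\E[g(u)u] = \E[\nabla g(u)]$ for $g(u) = f(x+\sigma u)$, then identifies it with \eqref{GSgradient}.

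The main obstacle is building the uniform integrable majorant in Step 2. This is where the quadratic growth bound from the Lipschitz gradient is essential, since for a general measurable $f$ the expectation may fail to exist. Once that domination bound is written down, the remaining steps are routine.
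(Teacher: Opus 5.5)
Your proof is correct and follows essentially the same route as the cited sources. The paper gives no proof of its own, deferring to \cite[Eq.~(21)]{NesterovGS} and \cite{Pong2024}, where the formula comes from substituting $y=x+\sigma u$ and differentiating the Gaussian kernel under the integral, just as in your Step~2; your quadratic-growth majorant is also the same bound the paper uses in the proof of Theorem~\ref{twice_diff_lip}. Your alternative route, via $\nabla f_\sigma(x)=\mathbb{E}[\nabla f(x+\sigma u)]$ and Gaussian integration by parts, is valid as well and mirrors the dominated-convergence argument the paper uses for the second equality in \eqref{GSHessian_Lip}.
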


We end this section with two lemmas concerning random variables that will be used in our complexity analysis in Section~\ref{sec:alg}. The next lemma generalizes \cite[Lemma~4.4]{Pong2024}, which corresponds to the case when $\theta = 2$.
\begin{lemma}\label{lem082701}
Let $\theta>0$, $x$ be a $d$-dimensional random vector and $g$ be a nonnegative lower semicontinuous function. Assume that
\begin{align}\label{conditionss}
 \mathbb{E}_{x}[\|x\|^{\theta}]\le\beta_{c}\ \ \text{and}\ \ \mathbb{E}_{x}\left[\frac{g(x)}{1+\|x\|^{n}}\right]\le\alpha_{c}
\end{align}
for some integer $n\ge1$ and nonnegative numbers $\alpha_{c}$ and $\beta_{c}$. Then
\begin{align*}
 \mathbb{E}_{x}\left[g(x)^{\frac{1}{2\lceil n/\theta\rceil}}\right]\le\left(1+\sqrt{\beta_{c}}\right)(2\alpha_{c})^{\frac{1}{2\lceil n/\theta\rceil}}.
\end{align*}
\end{lemma}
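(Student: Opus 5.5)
Set $m := \lceil n/\theta\rceil$ and $p := \frac{1}{2m}$. The plan is to split $g^p$ through the weight $1+\|x\|^n$ and then apply Cauchy--Schwarz:
\[
g(x)^{p} = \left(\frac{g(x)}{1+\|x\|^n}\right)^{p}\bigl(1+\|x\|^n\bigr)^{p},
\]
so that
\[
\mathbb{E}[g^{p}] \le \left(\mathbb{E}\left[\left(\frac{g}{1+\|x\|^n}\right)^{2p}\right]\right)^{1/2}\left(\mathbb{E}\bigl[(1+\|x\|^n)^{2p}\bigr]\right)^{1/2}.
\]
Lower semicontinuity and nonnegativity of $g$ only serve to make $g(x)$ a measurable nonnegative random variable, so all expectations are defined in $[0,\infty]$.

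\textbf{First factor.} Since $2p = 1/m\le 1$, the map $t\mapsto t^{1/m}$ is concave on $[0,\infty)$. Jensen's inequality then gives
\[
\mathbb{E}\left[\left(\frac{g}{1+\|x\|^n}\right)^{1/m}\right]\le \alpha_c^{1/m}.
\]
Taking square roots bounds the first factor by $\alpha_c^{p}$.

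\textbf{Second factor.} Because $1/m\le 1$, subadditivity of $t\mapsto t^{1/m}$ gives $(1+\|x\|^n)^{1/m}\le 1+\|x\|^{n/m}$. Since $m\ge n/\theta$, we have $n/m\le\theta$. Hence $\|x\|^{n/m}\le 1+\|x\|^\theta$: this is trivial when $\|x\|\le1$, and when $\|x\|>1$ it follows from $n/m\le\theta$. This yields only the crude bound $\mathbb{E}[(1+\|x\|^n)^{1/m}]\le 2+\beta_c$. To get the sharper constant, I instead use Jensen again on $\mathbb{E}[\|x\|^{n/m}]\le (\mathbb{E}\|x\|^\theta)^{n/(m\theta)}$, valid since $n/(m\theta)\le1$, which gives $\mathbb{E}[\|x\|^{n/m}]\le\beta_c^{n/(m\theta)}$. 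That is at most $\max(1,\beta_c)\le 1+\beta_c$, so the second factor is at most $\sqrt{2+\beta_c}$.

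\textbf{Matching the stated constant.} The target is $(1+\sqrt{\beta_c})\,2^{p}\alpha_c^{p}$, and I have the bound $\alpha_c^{p}\sqrt{2+\beta_c}$. Since $2^{p}\ge1$, it suffices to have $\sqrt{2+\beta_c}\le 2^{p}(1+\sqrt{\beta_c})$. This is clear when $m=1$, since then $2^{p}=\sqrt2$ and $\sqrt{2+\beta_c}\le\sqrt2(1+\sqrt{\beta_c})$. It fails for large $m$ with $\beta_c$ small, so the splitting must be adjusted, and this is the main obstacle.

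\textbf{Refined split.} To fix this, I avoid Cauchy--Schwarz and split on events instead. On $\{\|x\|\le1\}$ we have $1+\|x\|^n\le2$, so $g^{p}\le \bigl(2\,\tfrac{g}{1+\|x\|^n}\bigr)^{p}$, and Jensen (since $p\le1$) bounds this part by $(2\alpha_c)^{p}$. On $\{\|x\|>1\}$ we have $1+\|x\|^n\le 2\|x\|^n$, so
\[
g^{p}\le (2\,h)^{p}\,\|x\|^{n/(2m)},\qquad h:=\frac{g}{1+\|x\|^n}.
\]
Applying Cauchy--Schwarz to $(2h)^{p}\cdot\|x\|^{n/(2m)}$ gives at most $\sqrt{\mathbb{E}[(2h)^{1/m}]}\cdot\sqrt{\mathbb{E}[\|x\|^{n/m}\mathbf 1_{\|x\|>1}]}$. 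Since $\|x\|^{n/m}\le\|x\|^\theta$ when $\|x\|>1$, this is at most $(2\alpha_c)^{p}\sqrt{\beta_c}$. Summing the two regions yields $(1+\sqrt{\beta_c})(2\alpha_c)^{p}$, which is exactly the claim. The restriction to $\|x\|>1$ is therefore the key step, since it is what allows the $\theta$-moment to dominate $\|x\|^{n/m}$ without an additive constant.
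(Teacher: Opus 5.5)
Your refined split is correct and yields exactly $(1+\sqrt{\beta_c})(2\alpha_c)^{1/(2m)}$ with $m=\lceil n/\theta\rceil$, but you organize the argument differently from the paper. The paper never splits into events. It uses the single weight $1+\|x\|^{\theta/2}$, writing $g^{1/(2m)}=\frac{g^{1/(2m)}}{1+\|x\|^{\theta/2}}\cdot(1+\|x\|^{\theta/2})$, and applies Cauchy--Schwarz. The weight factor is controlled by expanding the square: $\mathbb{E}[(1+\|x\|^{\theta/2})^2]\le 1+2\sqrt{\beta_c}+\beta_c=(1+\sqrt{\beta_c})^2$. The other factor is controlled by the power-mean (Jensen) step from the second to the $2m$-th moment, together with the pointwise bound $1+\|x\|^n\le 2(1+\|x\|^{\theta m})\le 2(1+\|x\|^{\theta/2})^{2m}$. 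In your version, the two summands of $1+\sqrt{\beta_c}$ come from the events $\{\|x\|\le 1\}$ and $\{\|x\|>1\}$ rather than from expanding a square. You use the comparison $n/m\le\theta$ only where $\|x\|>1$, where it is immediate, and you do not need the binomial-type inequality $(1+a)^{2m}\ge 1+a^{2m}$. As a byproduct, you get a slightly stronger bound in which $\beta_c$ is replaced by the tail moment $\mathbb{E}[\|x\|^{n/m}\mathbf 1_{\{\|x\|>1\}}]$. The paper's route, by contrast, is a single chain of inequalities with no case distinction. The underlying tools are the same in both: Jensen for the concave power $t\mapsto t^{1/m}$ and Cauchy--Schwarz. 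In a write-up, drop the first global Cauchy--Schwarz attempt entirely. Note also that the Jensen ``refinement'' in your Second factor paragraph does not improve on the crude bound $\sqrt{2+\beta_c}$; this is moot once you switch to the split.
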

\begin{proof}
Notice that
\begin{align}\label{eq082601}
&\left(\mathbb{E}_{x}\Bigg[\Bigg(\frac{g(x)^{\frac{1}{2\lceil n/\theta\rceil}}}{1+\|x\|^{\theta/2}}\Bigg)^{2}\Bigg]\right)^{1/2}
\overset{\rm (a)}\le\Bigg(\mathbb{E}_{x}\Bigg[\Bigg(\frac{g(x)^{\frac{1}{2\lceil n/\theta\rceil}}}{1+\|x\|^{\theta/2}}\Bigg)^{2\lceil n/\theta\rceil}\Bigg]\Bigg)^\frac1{2\lceil n/\theta\rceil}\notag\\
& \ \ \ \ = \Bigg(\mathbb{E}_{x}\Bigg[\frac{g(x)}{(1+\|x\|^{\theta/2})^{2\lceil n/\theta\rceil}}\Bigg]\Bigg)^\frac1{2\lceil n/\theta\rceil}
\!\!\overset{\rm (b)}\le\! \Bigg(2 \mathbb{E}_{x}\Bigg[\frac{g(x)}{1+\|x\|^{n}}\Bigg]\Bigg)^\frac1{2\lceil n/\theta\rceil}
\!\!\!\le\!(2\alpha_{c})^{\frac{1}{2\lceil n/\theta\rceil}},
\end{align}
where (a) follows from Jensen's inequality,
(b) holds since $1+\|x\|^{n}\le2(1+\|x\|^{\theta\lceil n/\theta\rceil})\le2(1+\|x\|^{\theta/2})^{2\lceil n/\theta\rceil}$, and the last inequality follows from \eqref{conditionss}.
On the other hand, note that $\mathbb{E}_{x}[\|x\|^{\theta}]\le\beta_{c}$ implies $\mathbb{E}_{x}[\|x\|^{\theta/2}]\le\sqrt{\beta_{c}}$ and hence
$\mathbb{E}_{x}[(1+\|x\|^{\theta/2})^{2}]\le(1+\sqrt{\beta_{c}})^{2}$.
Combining this with \eqref{eq082601} gives
\begin{align*}
 (1+\sqrt{\beta_{c}})^2(2\alpha_{c})^{\frac{1}{\lceil n/\theta\rceil}}
 \!\ge\!
\mathbb{E}_{x}[(1+\|x\|^{\theta/2})^{2}]\cdot\mathbb{E}_{x}\Bigg[\Bigg(\frac{g(x)^{\frac{1}{2\lceil n/\theta\rceil}}}{1+\|x\|^{\theta/2}}\Bigg)^{2}\Bigg]
\!\ge\!
 \Bigg(\mathbb{E}_{x}\bigg[g(x)^{\frac{1}{2\lceil n/\theta\rceil}}\bigg]\Bigg)^2.
\end{align*}
\end{proof}

The next lemma is  a version of Rosenthal's inequality, which was also used in \cite{Bala2022} to analyze the complexity of their zeroth-order methods.
\begin{lemma}[{\cite[Theorem 2.1]{Rio2009},\cite[Theorem 8]{Bala2022}} ]\label{4moment}
Let $X_{j}, j=1,\ldots,n$ be independent mean-zero random variables. Then, we have $\mathbb{E}[|\sum_{j=1}^{n}X_{j}|^{4}]\leq[3\sum_{j=1}^{n}(\mathbb{E}[|X_{j}|^{4}])^{1/2}]^{2}$.
\end{lemma}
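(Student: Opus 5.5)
The statement is Rosenthal-type and is quoted from \cite{Rio2009,Bala2022}, so one option is simply to invoke it. A self-contained argument is still short, and I would proceed as follows. Set $S=\sum_{j=1}^n X_j$ and expand $\mathbb{E}[S^4]=\sum_{i,j,k,l}\mathbb{E}[X_iX_jX_kX_l]$. By independence and the mean-zero property, any term in which some index appears exactly once vanishes. Only two kinds of terms survive: those with $i=j=k=l$, and those in which the indices form two distinct pairs. Counting the pairings gives
\begin{equation*}
\mathbb{E}[S^4]=\sum_{j=1}^n\mathbb{E}[X_j^4]+3\sum_{i\neq j}\mathbb{E}[X_i^2]\,\mathbb{E}[X_j^2].
\end{equation*}
For this I would assume $\mathbb{E}[X_j^4]<\infty$ for every $j$; otherwise the right-hand side is $+\infty$ and there is nothing to prove.

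Next, write $a_j:=(\mathbb{E}[X_j^4])^{1/2}$. Jensen's inequality (equivalently, Lyapunov's inequality) gives $\mathbb{E}[X_j^2]\le a_j$. The cross terms are then bounded by $3\sum_{i\neq j}a_ia_j$, and the diagonal terms equal $\sum_j a_j^2$. Hence
\begin{equation*}
\mathbb{E}[S^4]\le \sum_j a_j^2+3\sum_{i\ne j}a_ia_j\le 3\Big(\sum_j a_j\Big)^2\le 9\Big(\sum_j a_j\Big)^2=\Big[3\sum_{j=1}^n(\mathbb{E}[|X_j|^4])^{1/2}\Big]^2 .
\end{equation*}
So the claimed constant $3$ inside the square is in fact generous.

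The only step needing care is the combinatorial expansion: checking that exactly three pairings contribute for each unordered pair $\{i,j\}$ with $i\ne j$, and that all mixed terms with a singleton index vanish. Integrability of each product follows from H\"older's inequality given finite fourth moments. Nothing beyond these elementary facts is required, so I expect no genuine obstacle.
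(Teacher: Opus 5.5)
Your proof is correct, but it is not what the paper does: the paper gives no argument and simply imports the inequality from \cite{Rio2009} and \cite{Bala2022}. The cited result is the case $p=4$ of a Burkholder--Rosenthal type bound $(\mathbb{E}|\sum_j X_j|^p)^{2/p}\le (p-1)\sum_j(\mathbb{E}|X_j|^p)^{2/p}$. That bound holds for every $p\ge 2$ and even for martingale difference sequences, and the factor $3$ in the statement is just $p-1$.

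Your direct expansion of $\mathbb{E}[S^4]$ needs full independence, both to factor the mixed moments and to discard terms with a singleton index. It also uses that $4$ is an even integer, so it does not extend to general $p$ or to dependent summands. In exchange it is elementary and self-contained. It also gives the sharper bound $\mathbb{E}[S^4]\le 3(\sum_j a_j)^2$ instead of $9(\sum_j a_j)^2$.

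The lemma is used only in step (a) of the proof of Lemma~\ref{lem2025080501}. There the summands $(\nabla^2 f_{\sigma_k}(x^k)-\widehat H_i^k)_{p,q}$ are conditionally i.i.d. and mean zero given the history. So your version, applied under $\mathbb{E}_k$, suffices for the paper's purposes. It would even reduce the constant $D_3$ by a factor of $\sqrt{3}$.
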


\section{Goldstein second-order $\delta$-subdifferential}

For any $f \in \C1$, any local minimizer $\bar x$ is a second-order stationary point in the sense of \eqref{2nd-order_stationary}. It is thus tempting to develop algorithms for finding points that approximately satisfy \eqref{2nd-order_stationary}, e.g., finding an $\bar x$ such that for some $\epsilon_1 > 0$ and $\epsilon_2>0$,
\begin{equation}\label{notion_2nd-order_stationary}
\|\nabla f(\bar x)\|\le \epsilon_1\ \ {\rm and}\ \ \inf_{\|h\|=1}\suppf(h,\partial^2_Hf(\bar x)h) \ge -\epsilon_2.
\end{equation}
However, given that checking the second-order stationarity for smooth constrained optimization problems is known to be intractable~\cite{MurtyKabadi87,NouiehedLeeRazaviyayn18}, it is unclear whether one can find such an $\bar x$ satisfying \eqref{notion_2nd-order_stationary} in polynomial time. 
Motivated by the success of Goldstein subdifferential as a tractable surrogate in the first-order setting, we consider the following ``relaxation" of the set $\partial^2_Hf(x)$ for defining a notion of approximate second-order stationarity. Our construction involves taking convex hull and considering sets $\partial^2_Hf(y)$ for $y$ close to $x$, which is analogous to the construction of the (first-order) Goldstein $\delta$-subdifferential.

\begin{definition}\label{2nd subdifferential}
 Let $f\in \C1$ and $\delta > 0$. The Goldstein second-order $\delta$-subdifferential of $f$ at $x\in \R^d$ is defined as
  \begin{align*}
  \partial_{G,\delta}^{2}f(x):=\mathrm{conv}\left(\bigcup_{y\in\mathbb{B}(x,\delta)}\partial_{H}^{2}f(y)\right).
  \end{align*}
\end{definition}

\begin{remark}\label{remark3}
In view of \eqref{def_Hess}, \eqref{Equality}, \cite[Proposition~2.6.2]{Clarke1983} and \cite[Theorem~8.12.2]{JDieudonne69}, one can show that $\partial_{G,\delta}^{2}f(x)$ is a compact convex set consisting of symmetric matrices.
\end{remark}

We are now ready to define the following notion of approximate second-order stationarity.
\begin{definition}
  Let $f\in \C1$ and let $\epsilon_1\ge 0$, $\epsilon_2\ge 0$ and $\delta > 0$. We say that $\bar x$ is an $(\epsilon_1,\epsilon_2,\delta)$-second-order stationary point of $f$ if
  \[
  \|\nabla f(\bar x)\|\le \epsilon_1\ \ {\rm and}\ \ \inf_{\|h\|=1}\suppf(h,\partial^2_{G,\delta}f(\bar x)h) \ge -\epsilon_2.
  \]
\end{definition}
In view of \cite[Theorem 3.1]{Hu84}, one can readily show that any local minimizer of an $f\in C^{1,1}(\R^d)$ is $(0,0,\delta)$-second-order stationary for any $\delta > 0$. 

Our next task is to design a zeroth-order algorithm to find $(\epsilon_1,\epsilon_2,\delta)$-second-order stationary points numerically, given $\epsilon_1>0$, $\epsilon_2>0$ and $\delta>0$.
Because of their strong theoretical guarantees, GS-based algorithms are among the most popular zeroth-order optimization algorithms. We will thus develop a new GS-based zeroth-order algorithm, and we aim at minimizing {\em Lipschitz differentiable} functions as in many previous works on GS-based algorithms (see, e.g., \cite{NesterovGS,LinZHengJordan22}) for succinctness. To this end, an important intermediate step is to investigate how the approximate second-order stationary points of the Gaussian smoothing $f_\sigma$ of $f$ are related to $(\epsilon_1,\epsilon_2,\delta)$-second-order stationary points. We start with some representation formulae for $\nabla^2 f_\sigma$. We would like to point out that the first equality in \eqref{GSHessian_Lip} was presented in \cite[Eq.~(4.3)]{Bala2022} under the additional assumption that $f$ has Lipschitz Hessian. We provide a detailed proof that only requires Lipschitz differentiability of $f$ for the convenience of the readers.

\begin{theorem} \label{twice_diff_lip}
    Suppose that $f: \mathbb{R}^d \rightarrow \mathbb{R}$ has Lipschitz continuous gradient with constant $L>0$, and $\sigma >0$. Then the Hessian of $f_{\sigma}$ is given by
\begin{align}
    \nabla^{2}f_{\sigma}(x)\!=\!\frac{1}{\sigma^{2}}\,\mathbb{E}_{u\sim\mathcal{N}(0,I)}\bigl[f(x+\sigma u)(uu^{T}-I)\bigr]
      \!=\!\frac{1}{\sigma}\,\mathbb{E}_{u\sim\mathcal{N}(0,I)}\bigl[u\nabla f(x+\sigma u)^{T}\bigr] \label{GSHessian_Lip}
\end{align}
and is well-defined and continuous.
\end{theorem}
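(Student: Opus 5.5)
The plan is to obtain $\nabla^2 f_\sigma$ by differentiating, in $x$, a representation of $\nabla f_\sigma$ that contains $\nabla f$ rather than $f$.

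\textbf{Step 1: a gradient form of $\nabla f_\sigma$.} Since $\nabla f$ is Lipschitz, $|f(x+\sigma u)|$ and $\|\nabla f(x+\sigma u)\|$ grow at most quadratically and linearly in $\|u\|$, uniformly for $x$ in bounded sets. Dominated convergence therefore lets us differentiate $f_\sigma(x)=\mathbb{E}[f(x+\sigma u)]$ under the integral sign, which gives
\[
\nabla f_\sigma(x)=\mathbb{E}_{u}[\nabla f(x+\sigma u)].
\]
(This agrees with Lemma~\ref{lem053101} via Gaussian integration by parts.) Next, change variables $y=x+\sigma u$ to write
\[
\nabla f_\sigma(x)=(2\pi\sigma^2)^{-d/2}\int_{\R^d}\nabla f(y)\,e^{-\|y-x\|^2/(2\sigma^2)}\,dy .
\]
Now $x$ appears only in the smooth Gaussian kernel.

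\textbf{Step 2: differentiate the kernel.} Differentiate under the integral with respect to $x$. The kernel's $x$-derivative is $\frac{(y-x)}{\sigma^2}\,e^{-\|y-x\|^2/(2\sigma^2)}$. To justify the interchange, bound the integrand locally uniformly for $x\in\B(x_0,1)$ by an integrable function: use $\|\nabla f(y)\|\le \|\nabla f(0)\|+L\|y\|$, and note that $\sup_{x\in\B(x_0,1)}\|y-x\|e^{-\|y-x\|^2/(2\sigma^2)}$ is dominated by $C(1+\|y\|)e^{-(\|y-x_0\|-1)_+^2/(2\sigma^2)}$, which is integrable against $1+\|y\|$. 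Changing variables back yields
\[
\nabla^2 f_\sigma(x)=\frac1\sigma\,\mathbb{E}_u\bigl[u\,\nabla f(x+\sigma u)^T\bigr].
\]
The matrix is written with $u$ on the left, consistent with differentiating the $j$-th component $\partial_j f_\sigma$ in direction $e_i$. Applying exactly the same argument to Lemma~\ref{lem053101}'s formula $\nabla f_\sigma(x)=\sigma^{-1}(2\pi\sigma^2)^{-d/2}\int f(y)\frac{y-x}{\sigma}e^{-\|y-x\|^2/(2\sigma^2)}dy$ gives
\[
\nabla^2 f_\sigma(x)=\frac{1}{\sigma^2}\,\mathbb{E}\bigl[f(x+\sigma u)(uu^T-I)\bigr].
\]
Here the derivative of $(y-x)e^{-\|y-x\|^2/(2\sigma^2)}$ produces $-I+\frac{(y-x)(y-x)^T}{\sigma^2}$, and domination uses the quadratic growth of $f$. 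Both expressions are derivatives of the same function, so they coincide, which proves both equalities in \eqref{GSHessian_Lip}.

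\textbf{Step 3: continuity.} Continuity follows from the convolution form as well. The integrand $f(y)(\cdots)e^{-\|y-x\|^2/(2\sigma^2)}$ is continuous in $x$ and dominated by the same locally uniform integrable bound, so dominated convergence gives continuity of $x\mapsto\nabla^2 f_\sigma(x)$. Existence of the Hessian as a genuine (Fréchet) derivative of $\nabla f_\sigma$ then follows, since the partial derivatives exist and are continuous.

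The main obstacle is the rigorous justification of differentiation under the integral sign, specifically producing the locally uniform integrable majorant. The key idea is to move the $x$-dependence onto the Gaussian density via the change of variables, so that no differentiability of $\nabla f$ is ever needed; the remaining work is routine Gaussian tail estimates.
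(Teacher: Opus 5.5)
Your proof is correct, and it reaches the statement by a partly different route from the paper's.

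For the first equality, both you and the paper put all the $x$-dependence on the Gaussian kernel in the convolution form of $\nabla f_\sigma$ coming from Lemma~\ref{lem053101}. The difference is in how the limit of the difference quotients is justified. The paper uses distribution theory: quadratic growth of $f$ makes $g\mapsto\int f g$ a tempered distribution, and the kernel's difference quotients converge in Schwartz space. You instead differentiate under the integral sign with an explicit locally uniform majorant built from $e^{-(\|y-x_0\|-1)_+^2/(2\sigma^2)}$. Your argument is more elementary, and the same bound also gives continuity.

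For the second equality the mechanisms genuinely differ. The paper differentiates $\frac{1}{\sigma}\mathbb{E}[f(x+\sigma u)u_i]$ with $u$ held fixed. The difference quotient then falls on $f$ and is dominated via $\|\nabla f(0)\|+L\|x+\sigma u\|+L|t|$. You first establish $\nabla f_\sigma(x)=\mathbb{E}[\nabla f(x+\sigma u)]$ and then differentiate the kernel. This amounts to applying the Gaussian-smoothing gradient formula to each Lipschitz component $\nabla_j f$. You then identify the two expressions as Jacobians of the same map; the symmetry of $\frac{1}{\sigma^2}\mathbb{E}[f(x+\sigma u)(uu^T-I)]$ removes any $i,j$ ordering issue.

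What each route buys:
\begin{itemize}
\item The paper's route needs no intermediate identity: once Lemma~\ref{lem053101} is available, the second equality is a single dominated-convergence step.
\item Your route is self-contained and uses one kind of majorant throughout. It also yields, as a by-product, the identity $\nabla f_\sigma(x)=\mathbb{E}[\nabla f(x+\sigma u)]$. The paper uses this identity elsewhere, for example in the proof of Theorem~\ref{theorem_hessian}(i), where it is imported from prior work rather than proved.
\end{itemize}
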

\begin{proof}

Since $f$ has $L$-Lipschitz continuous gradient (and so does $-f$), we have from the Taylor's inequality that
\[
|f(x)|\le |f(0)| + \|\nabla f(0)\|\|x\| + \frac{L}{2}\|x\|^{2}\ \ \ \ \ \ \forall x\in \R^d.
\]
Then for any $r>0$, we have 
\[
\int_{\mathbb{B}_{r}}|f(x)|dx\le r^{d}\alpha(d)\cdot\left(|f(0)| + \|\nabla f(0)\|r + \frac{L}{2}r^{2}\right) =: {\frak U}(r),
\]
where $\alpha(d)$ is the volume of the $d$-dimensional unit ball $\B$. Since ${\frak U}(r)=O(r^{d+2})$ as $r\to\infty$, in view of \cite{Stein2011},\footnote{Specifically, see the last example on page 106.} we see that ${\frak F}(g):=\frac{1}{(2\pi)^\frac{d}{2}\sigma^{d}}\int_{\R^{d}}f(y)g(y)dy$ is a continuous linear functional on Schwartz space (i.e., a tempered distribution). Similarly, for each $i=1,\ldots,d$, ${\frak F}_i(g):=\frac{1}{(2\pi)^\frac{d}{2}\sigma^{d}}\int_{\R^{d}}y_if(y)g(y)dy$ is also a tempered distribution.

Based on the above facts, we now prove the first equality in \eqref{GSHessian_Lip} and the well-definedness of the expectation there. Our argument follows closely the proof of \cite[Theorem~2.3.20]{Grafakos2008}. From Lemma~\ref{lem053101}, we know that $\nabla f_{\sigma}$ is well-defined and continuous, and for any $x\in \R^d$, $t\in \R\backslash\{0\}$, and for any $i=j \in \{1,\ldots,d\}$,
\begin{align}
  &\frac{\nabla_{i}f_{\sigma}(x+te_{j})-\nabla_{i}f_{\sigma}(x)}{t}\notag\\
  &=\frac{1}{(2\pi)^\frac{d}{2}\sigma^{d+2}}\int_{\R^{d}}f(y)\frac{(y_{j}-(x_{j}+t))
  e^{-\frac{\|x+te_{j}-y\|^{2}}{2\sigma^{2}}}-(y_{j}-x_{j})e^{-\frac{\|x-y\|^{2}}{2\sigma^{2}}}}{t}dy\notag\\
  &=\frac1{\sigma^2} {\frak F}\left(\frac{(y_{j}-(x_{j}+t))e^{-\frac{\|x+te_{j}-y\|^{2}}{2\sigma^{2}}}-(y_{j}-x_{j})e^{-\frac{\|x-y\|^{2}}{2\sigma^{2}}}}{t}\right),\label{inintegral1}
\end{align}
while for any $i\neq j \in \{1,\ldots,d\}$,
\begin{align}
  &\frac{\nabla_{i}f_{\sigma}(x+te_{j})-\nabla_{i}f_{\sigma}(x)}{t}\notag\\
  &=\frac{1}{(2\pi)^\frac{d}{2}\sigma^{d+2}}\int_{\R^{d}}f(y)\frac{(y_{i}-x_{i})
  e^{-\frac{\|x+te_{j}-y\|^{2}}{2\sigma^{2}}}-(y_{i}-x_{i})e^{-\frac{\|x-y\|^{2}}{2\sigma^{2}}}}{t}dy\notag\\
  &=\frac1{\sigma^2} {\frak F}_i\left(\frac{e^{-\frac{\|x+te_{j}-y\|^{2}}{2\sigma^{2}}}- e^{-\frac{\|x-y\|^{2}}{2\sigma^{2}}}}{t}\right) - \frac{x_i}{\sigma^2}{\frak F}\left(\frac{e^{-\frac{\|x+te_{j}-y\|^{2}}{2\sigma^{2}}}-e^{-\frac{\|x-y\|^{2}}{2\sigma^{2}}}}{t}\right).\label{inintegral2}
\end{align}
Since
\begin{align*}
 &\frac{(y_{j}-(x_{j}+t))e^{-\frac{\|x+te_{j}-y\|^{2}}{2\sigma^{2}}}-(y_{j}-x_{j})e^{-\frac{\|x-y\|^{2}}{2\sigma^{2}}}}{t}\rightarrow 
 \left(\frac{(y_{j}-x_{j})^{2}}{\sigma^2}-1\right)e^{-\frac{\|x-y\|^{2}}{2\sigma^{2}}}\\
 &{\rm and}\ \ \ \ \ \frac{e^{-\frac{\|x+te_{j}-y\|^{2}}{2\sigma^{2}}}-e^{-\frac{\|x-y\|^{2}}{2\sigma^{2}}}}{t}\rightarrow 
 \frac{y_{j}-x_{j}}{\sigma^2}e^{-\frac{\|x-y\|^{2}}{2\sigma^{2}}}
\end{align*}
as $t \to 0$ in Schwartz space according to \cite[Exercise~2.3.5(a)]{Grafakos2008} and ${\frak F}$ is a tempered distribution, we conclude upon passing to the limit as $t \to 0$ in \eqref{inintegral1} and \eqref{inintegral2} that
\begin{align*}
  \nabla^{2} f_{\sigma}(x)&= \frac{1}{(2\pi)^\frac{d}{2}\sigma^{d+2}}\int_{\R^d}f(y)[(y-x)(y-x)^{T}/\sigma^{2}-I]e^{-\frac{\|x-y\|^{2}}{2\sigma^{2}}}dy\\
  &=\frac{1}{(2\pi)^\frac{d}{2}\sigma^{2}}\int_{\R^d}f(x+\sigma u)(uu^{T}-I)e^{-\frac{\|u\|^{2}}{2}}du.
\end{align*}
This proves the first equality in \eqref{GSHessian_Lip} and the well-definedness of the integral.

The continuity of $\nabla^{2} f_{\sigma}$ now follows immediately from the above integral representation and Lebesgue dominated convergence theorem.

Finally, it remains to prove the last equality in \eqref{GSHessian_Lip}. Since $f$ has $L$-Lipschitz continuous gradient, we have for any $t\neq 0$,
\begin{align}\label{ineq_bound_Lip}
 &|t|^{-1}|f(x+te_{j}+\sigma u)-f(x+\sigma u)|\le \sup_{0\le\theta\le1}\|\nabla f(x+(\theta t)e_{j}+\sigma u)\|\notag\\
 &\le\|\nabla f(0)\|+L\|x+\sigma u\|+L|t|.
\end{align}

Then we have
\begin{align*}
\nabla^{2}_{ij}f_{\sigma}(x)&=\lim_{t\rightarrow 0}\frac{\nabla_{i}f_{\sigma}(x+te_{j})-\nabla_{i}f_{\sigma}(x)}{t}\\
&=\lim_{t\rightarrow 0}\frac{1}{(2\pi)^{d/2}\sigma}\int_{\R^{d}}\frac{f(x+te_{j}+\sigma u)-f(x+\sigma u)}{t}u_{i}e^{-\frac{\|u\|^{2}}{2}}du\\
&=\frac{1}{(2\pi)^{d/2}\sigma}\int_{\R^{d}}\lim_{t\rightarrow 0}\frac{f(x+te_{j}+\sigma u)-f(x+\sigma u)}{t}u_{i}e^{-\frac{\|u\|^{2}}{2}}du\\
&=\frac{1}{(2\pi)^{d/2}\sigma}\int_{\R^{d}}\nabla_{j}f(x+\sigma u)u_{i}e^{-\frac{\|u\|^{2}}{2}}du,
\end{align*}
where the second equality follows from Lemma~\ref{lem053101}, and the third equality follows from \eqref{ineq_bound_Lip}, Lebesgue dominated convergence theorem and the fact that
\[
\int_{\mathbb{R}^{d}}(\|\nabla f(0)\|+L\|x+\sigma u\|+L)\|u\|e^{-\frac{\|u\|^{2}}{2}}du<+\infty.
\]
This completes the proof of \eqref{GSHessian_Lip}.
\end{proof}

We can now prove the following theorem relating $\nabla^2 f_\sigma(x)$ and $\partial^2_{G,\delta}f(x)$. 

\begin{theorem}[GS Hessian and Goldstein second-order $\delta$-subdifferential] \label{theorem_hessian}
    Assume that $f: \mathbb{R}^d \rightarrow \mathbb{R}$ has Lipschitz continuous gradient with constant $L >0$, and $\sigma > 0$. Then the following statements hold.
    \begin{enumerate}[{\rm (i)}]
  \item For any $x\in \R^d$ and $h\in\R^{d}$, one has
 \begin{align}\label{Hex}
  \langle h,\nabla^{2}f_{\sigma}(x)h\rangle=\langle h,\mathbb{E}_{u\sim \mathcal{N}(0,I)}[\nabla^{2}f(x+\sigma u)\cdot \mathbb{I}_{\mathfrak{D}_{\sigma}}(u)]h\rangle,
 \end{align}
 where $\mathfrak{D}_\sigma := \{u\in \R^d:\; \nabla f \mbox{ is differentiable at } x + \sigma u\}$ and $\mathbb{I}_{\mathfrak{D}_{\sigma}}$ is the characteristic function of $\mathfrak{D}_\sigma$.
  \item For any $\epsilon>0$ and $\delta>0$, it holds that
 \begin{align*}
  -2\epsilon+\lambda_{\min}[\nabla^{2}f_{\sigma}(x)]\le \inf_{\|h\|=1}\suppf(h,\partial_{G,\delta}^{2} f(x)h)
  \ \ \ \ \forall x\in \R^d\ \ {and}\ \ \forall \sigma \in (0,\bar{\sigma}],
\end{align*}
where
\begin{equation}\label{defsigma}
\begin{array}{c}
\bar{\sigma}:=\min\{1,\delta/H\},\ 
  H:=\sqrt{-d\cdot W_{-1}(-\eta_{1}^{\frac{2}{d}}/(2\pi e))},\\
  \eta_{1}:=\min\{\epsilon/(4L), (2\pi)^\frac{d}{2}-0.5\},
\end{array}
\end{equation}
and $W_{-1}$ is the negative real branch of the Lambert $W$ function,\footnote{Since $\eta_1\le (2\pi)^\frac{d}{2}-0.5$, we have $\eta_1^{2/d}/(2\pi e)< 1/e$ and hence $W_{-1}(-\eta_1^{2/d}/(2\pi e)) < -1$. Thus, $H\in (\sqrt{d},\infty)$.\label{footnote}}
 $\suppf(\cdot,\partial_{G,\delta}^{2} f(x)h)$ is the support function of the set $\partial_{G,\delta}^{2} f(x)h$.
\end{enumerate}
\end{theorem}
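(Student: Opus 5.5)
For part (i), the plan is to start from the second representation in \eqref{GSHessian_Lip}, $\nabla^2 f_\sigma(x)=\frac1\sigma\E[u\nabla f(x+\sigma u)^T]$, and integrate by parts (Gaussian integration by parts / Stein's identity) coordinatewise. Since $\nabla f$ is $L$-Lipschitz, by Rademacher's theorem it is differentiable almost everywhere. Hence $\mathfrak{D}_\sigma$ has full Gaussian measure, and each component $\nabla_j f(x+\sigma\cdot)$ is Lipschitz, hence absolutely continuous on almost every line. Applying Fubini and one-dimensional integration by parts in the variable $u_i$, using $u_ie^{-\|u\|^2/2}=-\partial_{u_i}e^{-\|u\|^2/2}$, gives
\[
\tfrac1\sigma\E[u_i\nabla_j f(x+\sigma u)]=\E[\partial_i\nabla_j f(x+\sigma u)\mathbb{I}_{\mathfrak{D}_\sigma}(u)].
\]
The boundary terms vanish by the linear growth of $\nabla f$ against the Gaussian tail, and the derivative is bounded by $L$, so everything is integrable. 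This yields the matrix identity (with $\nabla^2 f$ the derivative of $\nabla f$ where it exists), and (i) follows by taking the quadratic form. One subtlety is that the partial derivative in $u_i$ may exist on lines where full differentiability fails; but this only happens on a null set, so the indicator $\mathbb{I}_{\mathfrak{D}_\sigma}$ is harmless. The quadratic form is taken because $\nabla^2 f$ need not be symmetric pointwise, but its symmetric part is what matters, and at points of $\mathfrak{D}_\sigma$ it is symmetric anyway.

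For part (ii), fix $x$ and a unit $h$, and split the expectation in (i) into $\|u\|\le H$ and $\|u\|>H$. For $\sigma\le\delta/H$ and $\|u\|\le H$, we have $x+\sigma u\in\B(x,\delta)$. At points of $\mathfrak{D}_\sigma$, $\nabla^2 f(x+\sigma u)\in\partial^2_H f(x+\sigma u)\subseteq\partial^2_{G,\delta}f(x)$, using \eqref{def_Hess} and \eqref{Equality} with the constant sequence; this relies on $\nabla f$ being differentiable there, so that the Clarke Jacobian contains the Jacobian. Hence $\langle h,\nabla^2 f(x+\sigma u)h\rangle\le \suppf(h,\partial^2_{G,\delta}f(x)h)=:s$. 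On the complement, $|\langle h,\nabla^2 f h\rangle|\le L$. Writing $p:=\Pr(\|u\|>H)$, this gives
\[
h^T\nabla^2 f_\sigma(x)h\le (1-p)s+pL\le s+p(|s|+L)\le s+2pL,
\]
using $|s|\le L$ since every element of $\partial^2_{G,\delta}f$ has norm at most $L$. Combining with $\lambda_{\min}[\nabla^2 f_\sigma(x)]\le h^T\nabla^2 f_\sigma(x)h$ and taking the infimum over $h$, it suffices to show $2pL\le 2\epsilon$, i.e.\ $p\le\epsilon/L$. In fact I will aim for $p\le 4\eta_1\le\epsilon/L$, or a similar constant, matching $\eta_1\le\epsilon/(4L)$.

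The main obstacle is the Gaussian tail estimate tying $H$ to $\eta_1$ via Lambert $W$. I would use the Chernoff bound for the chi-square distribution,
\[
\Pr(\|u\|^2\ge t)\le \left(\tfrac{t}{d}\right)^{d/2}e^{-(t-d)/2}\qquad (t>d).
\]
Setting $t=H^2=-dW$ with $W:=W_{-1}(-\eta_1^{2/d}/(2\pi e))$, and using $We^W=-\eta_1^{2/d}/(2\pi e)$, one gets $(t/d)e^{-t/d+1}=(-W)e^{W+1}=\eta_1^{2/d}/(2\pi)$. Raising to the power $d/2$ yields $p\le\eta_1/(2\pi)^{d/2}\le\eta_1$. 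Footnote~\ref{footnote} guarantees $t>d$, so the bound applies. Then $2pL\le 2L\eta_1\le\epsilon/2\le 2\epsilon$, which finishes the proof. The condition $\sigma\le 1$ is not needed in this argument. If the constants misalign, the factor-$4$ slack absorbs them.
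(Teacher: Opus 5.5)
Your proof is correct. Part (ii) follows the paper's route with leaner bookkeeping, while part (i) takes a genuinely different one.

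\textbf{Part (i).} The paper does not start from the second formula in \eqref{GSHessian_Lip}. Instead it:
\begin{itemize}
\item writes $\langle h,\nabla^2 f_\sigma(x)h\rangle$ as the limit of $\frac{2}{t^2}[f_\sigma(x+th')-f_\sigma(x)-t\langle\nabla f_\sigma(x),h'\rangle]$ as $h'\to h$, $t\downarrow 0$;
\item moves this quotient inside the expectation using $\nabla f_\sigma(x)=\E[\nabla f(x+\sigma u)]$;
\item bounds the numerator by $\frac L2 t^2\|h'\|^2$ via the second-order mean value theorem for $C^{1,1}$ functions, together with the norm bound $L$ on $\partial^2_H f$;
\item passes to the limit by dominated convergence, using the pointwise second-order expansion at points of $\mathfrak{D}_\sigma$.
\end{itemize}
Your coordinatewise Gaussian integration by parts on the Lipschitz slices of $\nabla_j f(x+\sigma\,\cdot)$, with Fubini and Rademacher, avoids the generalized-Hessian machinery entirely. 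It also delivers the full matrix identity $\nabla^2 f_\sigma(x)=\E[\nabla^2 f(x+\sigma u)\mathbb{I}_{\mathfrak{D}_\sigma}(u)]$, up to a transpose that is harmless by symmetry. The paper's argument is tailored to the quadratic form, which is all that (ii) uses.

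\textbf{Part (ii).} Both proofs share the same skeleton:
\begin{itemize}
\item split at radius $H$;
\item on the ball, use $\sigma\le\delta/H$ and the constant-sequence inclusion $\nabla^2 f(y)\in\partial^2_H f(y)$ from \eqref{def_Hess};
\item charge the complement $L\Pr(\|u\|>H)$.
\end{itemize}
The differences are in bookkeeping. The paper normalizes the ball part by $\Pr(\mathfrak{D}_\sigma\cap\B_H)$ so that it becomes an element of $\langle\partial^2_{G,\delta}f(x)h,h\rangle$, and it imports the tail estimate from earlier work. You bound the ball part directly by $(1-p)\,\suppf(h,\partial^2_{G,\delta}f(x)h)$ using $|\suppf(h,\partial^2_{G,\delta}f(x)h)|\le L$. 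You also verify the Lambert-$W$ choice of $H$ explicitly through the chi-square Chernoff bound, obtaining $p\le\eta_1(2\pi)^{-d/2}$. This makes the argument self-contained and gives the sharper error $\epsilon/2$ in place of $2\epsilon$. It also shows that the cap $\sigma\le 1$ in $\bar\sigma$ is not used.
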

\begin{remark}[Simplified choice of $\bar\sigma$]\label{remark2025092201}
We present more explicit upper bounds for $\sigma$ in Theorem \ref{theorem_hessian}(ii). Let 
\begin{align*}
    0<\epsilon<\min\{5{L},1\}\ \ \text{and}\ \ 0<\delta<1.
\end{align*}
Now, using \eqref{defsigma} and following the discussions in \cite[Remark~3.7]{Pong2024} from \cite[Eq.~(3.28)]{Pong2024} to \cite[Eq.~(3.32)]{Pong2024} with $m = {\rm R}_1 = 0$ and ${\rm R}_2 = L$, we deduce that the inequality in Theorem \ref{theorem_hessian}(ii) holds whenever
\begin{equation}\label{2025092201}
 \sigma\le
 [\delta(4L)^{-1/d}/\sqrt{d\pi e}]\cdot\epsilon^{\frac{1}{d}}, 
\end{equation}
\end{remark}
\begin{proof}[Proof of Theorem~\ref{theorem_hessian}]
    (i) From Theorem~\ref{twice_diff_lip}, we know that $f_{\sigma}$ is twice continuously differentiable on $\mathbb{R}^{d}$.  Thus, for any fixed $x\in \R^d$ and $h\in \R^d$, one has
\begin{align}
  &\langle h,\nabla^{2}f_{\sigma}(x)h\rangle=\lim\limits_{\substack{ h'\rightarrow h \\ t\downarrow0}}\frac{f_{\sigma}(x+th')-f_{\sigma}(x)-t\langle\nabla f_{\sigma}(x),h'\rangle}{\frac{1}{2}t^{2}}\notag\\
  &=\lim\limits_{\substack{ h'\rightarrow h \\ t\downarrow0}}\frac{1}{\frac{1}{2}t^{2}}\mathbb{E}_{u\sim \mathcal{N}(0,I)}\left[f(x+th'+\sigma u) - f(x+\sigma u) -t\langle  \nabla f(x+\sigma u), h' \rangle \right]\notag\\
  &=\lim\limits_{\substack{ h'\rightarrow h \\ t\downarrow0}}\frac{1}{\frac{1}{2}t^{2}}\frac{1}{(2\pi)^{\frac{d}{2}}}\int_{{\frak D}_\sigma}\left[f(x+th'+\sigma u)-f(x+\sigma u)-t\langle\nabla f(x+\sigma u),h'\rangle\right]e^{-\frac{\|u\|^{2}}{2}}du,\label{h_dot_hassian}
\end{align}
where the second equality follows from the definition of $f_\sigma$ and \cite[Theorem~3.6(i)]{Pong2024}, and the last equality holds because of Rademacher's theorem.
Since $f$ has Lipschitz continuous gradient with constant $L$, we can obtain from \cite[Theorem 2.3]{Hu84} that for all $t > 0$, $h'\in \R^d$ and $u\in {\frak D}_\sigma$, 
there exists $\theta\in(0,1)$ such that
\begin{align*}
  f(x+th'+\sigma u)-f(x+\sigma u)-t\langle\nabla f(x+\sigma u),h'\rangle\in\frac{1}{2}t^{2}\langle\partial_{H}^{2}f(x+\sigma u+\theta th')h',h'\rangle.
\end{align*}
Then we see from \eqref{Equality} and \cite[Proposition~2.6.2(d)]{Clarke1983} that
\begin{align}
 &|f(x+th'+\sigma u)-f(x+\sigma u)-t\langle\nabla f(x+\sigma u),h'\rangle|\le\frac{L}{2}t^{2}\|h'\|^{2}.\label{in_subdiff}
\end{align}
Next, since $\nabla f$ is differentiable at $x + \sigma u$ whenever $u\in {\frak D}_\sigma$, standard argument shows that (see, e.g., the proof of \cite[Theorem~13.2(b)]{Tyrrell1998}) whenever $u\in {\frak D}_\sigma$,
\[
f(x+th'+\sigma u)-f(x+\sigma u)-t\langle\nabla f(x+\sigma u),h'\rangle = \frac12 t^2\langle h', \nabla^2 f(x+\sigma u)h'\rangle + o(t^2 \|h'\|^2),
\]
which means that
\[
\lim\limits_{\substack{ h'\rightarrow h \\ t\downarrow0}} \frac{f(x+th'+\sigma u)-f(x+\sigma u)-t\langle\nabla f(x+\sigma u),h'\rangle}{\frac12 t^2} = \langle h, \nabla^2 f(x+\sigma u)h\rangle.
\]
Using this together with \eqref{h_dot_hassian}, \eqref{in_subdiff} and Lebesgue dominated convergence theorem, we conclude that the desired result holds.

(ii) Fix any $\epsilon > 0$ and $\delta > 0$. Consider any $x\in \R^d$. Since $f$ has Lipschitz continuous gradient with constant $L > 0$, we see (from the last three displays) that
\begin{align}
  \|\nabla^{2}f(x+\sigma u)\|\le L\ \ \ \ \forall \sigma > 0 \ {\rm and}\ \forall u\in {\frak D}_\sigma.\label{eq033101}
\end{align}
We also observe for any $h$ with $\|h\|=1$ and any $M > 0$ that
\begin{align*}
 &\langle h,\mathbb{E}_{u\sim \mathcal{N}(0,I)}[\nabla^{2}f(x+\sigma u)\cdot \mathbb{I}_{\mathfrak{D}_{\sigma}}(u)]h\rangle\\
 &=\langle h,\mathbb{E}_{u\sim \mathcal{N}(0,I)}[\nabla^{2}f(x+\sigma u)\cdot \mathbb{I}_{\mathfrak{D}_{\sigma}\cap \mathbb{B}_{M}}(u)]h\rangle+\langle h,\mathbb{E}_{u\sim \mathcal{N}(0,I)}[\nabla^{2}f(x+\sigma u)\cdot \mathbb{I}_{\mathfrak{D}_{\sigma}\cap \mathbb{B}_{M}^{c}}(u)]h\rangle\\
 &=\frac{\langle h,\mathbb{E}_{u\sim \mathcal{N}(0,I)}[\nabla^{2}f(x+\sigma u)\cdot \mathbb{I}_{\mathfrak{D}_{\sigma}\cap \mathbb{B}_{M}}(u)]h\rangle}{\mathbb{E}_{u\sim \mathcal{N}(0,I)}[\mathbb{I}_{\mathfrak{D}_{\sigma}\cap\mathbb{B}_{M}}(u)]}+\langle h,\mathbb{E}_{u\sim \mathcal{N}(0,I)}[\nabla^{2}f(x+\sigma u)\cdot \mathbb{I}_{\mathfrak{D}_{\sigma}\cap \mathbb{B}_{M}^{c}}(u)]h\rangle\\
 &~~+\left(1-\frac{1}{\mathbb{E}_{u\sim \mathcal{N}(0,I)}[\mathbb{I}_{\mathfrak{D}_{\sigma}\cap\mathbb{B}_{M}}(u)]}\right)\langle h,\mathbb{E}_{u\sim \mathcal{N}(0,I)}[\nabla^{2}f(x+\sigma u)\cdot \mathbb{I}_{\mathfrak{D}_{\sigma}\cap \mathbb{B}_{M}}(u)]h\rangle.
\end{align*}
Now, using \eqref{eq033101} and following the proof of \cite[Theorem~3.6(ii)]{Pong2024} from \cite[Eq.~(3.19)]{Pong2024} to \cite[Eq.~(3.25)]{Pong2024} with $m={\rm R}_{1}=0$ and ${\rm R}_{2}=L$, we can similarly derive bounds for the second and third summands on the last two lines of the above display. Specifically, for the fixed $\epsilon$ and $\delta>0$, it holds for $M=H$ and any $\sigma \in (0,\bar\sigma]$ (see \eqref{defsigma} for the definitions of $H$ and $\bar\sigma$) that
\begin{align}
  &\langle h,\mathbb{E}_{u\sim \mathcal{N}(0,I)}[\nabla^{2}f(x+\sigma u)\cdot \mathbb{I}_{\mathfrak{D}_{\sigma}}(u)]h\rangle\notag\\
  &\le\frac{\langle h,\mathbb{E}_{u\sim \mathcal{N}(0,I)}[\nabla^{2}f(x+\sigma u)\cdot \mathbb{I}_{\mathfrak{D}_{\sigma}\cap \mathbb{B}_{M}}(u)]h\rangle}{\mathbb{E}_{u\sim \mathcal{N}(0,I)}[\mathbb{I}_{\mathfrak{D}_{\sigma}\cap\mathbb{B}_{M}}(u)]}+2\epsilon\ \ \ \ \forall h \in \{w:\|w\|=1\}.\label{eq042201}
\end{align}
Next, observe that $\sigma \in (0,\bar\sigma]$ and $M = H$ implies that $\sigma \le \delta/M$. Thus, for any $\sigma \in (0,\bar\sigma]$ and $M = H$,
we have from the definition of the Goldstein second-order $\delta$-subdifferential that
\begin{align}
 \frac{\langle h,\mathbb{E}_{u\sim \mathcal{N}(0,I)}[\nabla^{2}f(x+\sigma u)\cdot \mathbb{I}_{\mathfrak{D}_{\sigma}\cap \mathbb{B}_{M}}(u)]h\rangle}{\mathbb{E}_{u\sim \mathcal{N}(0,I)}[\mathbb{I}_{\mathfrak{D}_{\sigma}\cap\mathbb{B}_{M}}(u)]}\in\langle\partial_{G,\delta}^{2} f(x)h,h\rangle\ \ \ \ \forall h \in \{w:\|w\|=1\}.\label{eq042202}
\end{align}
Combining \eqref{eq042202}, \eqref{eq042201} and \eqref{Hex}, we can obtain the desired result.
\end{proof}

\section{GS-based method for $(\epsilon_1,\epsilon_2,\delta)$-2nd-order stationary points of Lipschitz differentiable functions}

In this section, we consider the optimization problem
\begin{equation}\label{problem}
	\begin{array}{rl}
		\displaystyle{\min_{x\in \mathbb{R}^d}} & f(x),
	\end{array}		
\end{equation}	
where $f$ has $L$-Lipschitz continuous gradient for some $L > 0$ but is not necessarily twice continuously differentiable, and $f_* := \inf f > -\infty$. We focus on the case where the function value of $f$ is available only via a blackbox or costly computational routine, and the gradient is too costly to obtain, and we will develop a zeroth-order algorithm for obtaining an $(\epsilon_1,\epsilon_2,\delta)$-second-order stationary point of $f$ in \eqref{problem}.

Motivated by Theorem~\ref{theorem_hessian} which builds a connection between the Hessian of Gaussian smoothing and the Goldstein second-order $\delta$-subdifferential, we will develop our algorithm based on Gaussian smoothing. 
Our algorithm is presented as Algorithm~\ref{algorithm 01} below. It can be viewed as a variant of \cite[Algorithm~7]{Bala2022}, which was proposed for \eqref{problem} with a twice continuously differentiable $f$ whose Hessian is Lipschitz continuous. Similarly to \cite[Algorithm~7]{Bala2022}, we adopt sample average approximations to approximate $\nabla f_\sigma$ and $\nabla^2 f_\sigma$ every iteration (leveraging \eqref{GSgradient} and \eqref{GSHessian_Lip}), and find the next iterate by solving a suitable cubic regularization subproblem.\footnote{Recall that there are many efficient methods for solving cubic regularization subproblems; see, e.g., \cite{CartisGouldToint11,Lieder20}.} However, unlike \cite[Algorithm~7]{Bala2022} (see \cite[Theorem~9]{Bala2022}), we do not fix the sample sizes for approximating $\nabla f_\sigma$ and $\nabla^2 f_\sigma$ according to a prescribed tolerance. Indeed, our algorithm does not require the users to specify a tolerance before starting the algorithm; in this regard, we follow \cite{LeiShan25} and increase our sample sizes for approximating $\nabla f_\sigma$ and $\nabla^2 f_\sigma$ every iteration. Furthermore, in contrast to the constant smoothing parameter $\sigma$ in \cite[Algorithm~7]{Bala2022}, we adopt a vanishing $\{\sigma_k\}$, making our approach an instance of homotopy methods (see, e.g., \cite{IwaWangItoTakeda22,Starnes2023}). Finally, we also need to point out that our definition of $h_k$ in \eqref{defsq} differs from the classical one used in the literature of cubic regularization methods. Specifically, we introduce an additional scaling of $(1+\|x^k\|)^3$; as we will see later in our convergence analysis, this extra scaling is introduced to account for a similar scaling that arises when we derive an upper bound for $\mathbb{E}_{k}\left[\|\nabla f_{\sigma_{k}}(x^{k})-v^{k}\|^{2}\right]$ in Lemma~\ref{lem2025080501}; note that this latter scaling in the upper bound is inevitable in general, see Remark~\ref{rem:dependence} below.

\begin{algorithm}[h]
		\caption{\!: Zeroth-order method for solving problem \eqref{problem}}
		\begin{algorithmic}
			\STATE
			{\bf Input:} Initial point $x^0\in \mathbb{R}^{d}$, real numbers $\alpha>0$, $\beta>0$, $\rho > 0$,   $\gamma\in (0,1)$. Set $\sigma_0 = \gamma$ and $\sigma_k = \gamma k^{-\rho}$ for all $k \ge 1$. Set iteration counter $k = 0$.
			
			{\bf Step 1:} Let
            \[
            N_{k}=\lceil(k+1)^{\frac{4}{3}\beta}\rceil,\
            J_{k}=\lceil(k+1)^{\frac{2}{3}\alpha}\rceil,
            \]
        and
generate $\max\{N_{k},J_k\}$ i.i.d. $u_{i}^{k}\sim \mathcal{N}(0,I)$ for $i=1,\ldots,\max\{N_k,J_k\}$, and set
\begin{align}
v^{k}&=\frac{1}{N_{k}\sigma_{k}}\sum_{i=1}^{N_{k}}(f(x^{k}+\sigma_{k} u_{i}^{k})-f(x^{k}))u_{i}^{k},\label{vkdef}\\
H^{k}&=\frac{1}{2J_{k}\sigma_{k}^{2}}\sum_{i=1}^{J_{k}}(f(x^{k}+\sigma_{k} {u}_{i}^{k})+f(x^{k}-\sigma_{k} {u}_{i}^{k})-2f(x^{k}))({u}_{i}^{k}({u}_{i}^{k})^{T}-I).\label{Hkdef}
\end{align}

{\bf Step 2:} Compute
			\begin{equation}\label{defsq}
				x^{k+1} \in \Argmin_{x\in\mathbb{R}^{d}} h_{k}(x),
			\end{equation}
where $h_{k}(x):=\langle v^{k},x-x^{k}\rangle+\frac{1}{2}\langle H^{k}(x-x^{k}),x-x^{k}\rangle+\frac{L\sqrt{d}}{6\sigma_{k}}(1+\|x^{k}\|)^{3}\|x-x^k\|^3$. Update $k\leftarrow k+1$ and go to {\bf Step 1}.
\end{algorithmic}
\label{algorithm 01}
\end{algorithm}

For the rest of this section, we will study the iteration complexity for obtaining an $(\epsilon_1,\epsilon_2,\delta)$-second-order stationary point using our algorithm. We first prove some auxiliary lemmas in Section~\ref{sec:lemma}. The complexity results are then derived in Section~\ref{sec:alg}.

\subsection{Auxiliary lemmas}\label{sec:lemma}

In this subsection, we present several auxiliary lemmas for our subsequent convergence analysis. These include some structural properties of $f_\sigma$ and some basic inequalities satisfied by the sequence $\{x^k\}$ generated by Algorithm~\ref{algorithm 01}. We start with the following descent lemma.
\begin{lemma}[Cubic descent lemma]\label{lem:cubic_descent}
    Suppose that $f$ has Lipschitz gradient with constant $L > 0$ and $\sigma >0$. Then it holds that
    \[
        f_{\sigma}(y) \le f_{\sigma}(x) +  \langle \nabla f_{\sigma}(x), y-x \rangle + \frac{1}{2}\langle \nabla^2f_{\sigma}(x)(y-x), y-x \rangle + \frac{L \sqrt{d}}{6\sigma}\|y-x\|^3\ \ \ \ \forall x,y\in \R^d.
    \]
\end{lemma}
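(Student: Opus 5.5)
The plan is to show that $\nabla^2 f_\sigma$ is Lipschitz continuous with constant $L\sqrt d/\sigma$ in operator norm. The cubic bound then follows from the standard integral form of Taylor's theorem.

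Since Theorem~\ref{twice_diff_lip} shows that $f_\sigma$ is twice continuously differentiable, for any $x,y$ we can write
\[
f_\sigma(y)-f_\sigma(x)-\langle\nabla f_\sigma(x),y-x\rangle-\tfrac12\langle\nabla^2 f_\sigma(x)(y-x),y-x\rangle
=\int_0^1(1-t)\langle[\nabla^2 f_\sigma(x+t(y-x))-\nabla^2 f_\sigma(x)](y-x),y-x\rangle\,dt .
\]
If $\|\nabla^2 f_\sigma(z)-\nabla^2 f_\sigma(x)\|\le \frac{L\sqrt d}{\sigma}\|z-x\|$, the right-hand side is at most $\frac{L\sqrt d}{\sigma}\|y-x\|^3\int_0^1(1-t)t\,dt=\frac{L\sqrt d}{6\sigma}\|y-x\|^3$, which is exactly the claim.

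For the Lipschitz estimate, use the second representation in \eqref{GSHessian_Lip}. For $z,x\in\R^d$ and unit vectors $a,b$, it gives
\[
\langle a,(\nabla^2 f_\sigma(z)-\nabla^2 f_\sigma(x))b\rangle=\frac1\sigma\mathbb{E}_u\bigl[\langle a,u\rangle\langle \nabla f(z+\sigma u)-\nabla f(x+\sigma u),b\rangle\bigr].
\]
By Cauchy--Schwarz and the $L$-Lipschitz continuity of $\nabla f$, the absolute value is at most
\[
\frac1\sigma\mathbb{E}[|\langle a,u\rangle|]\,L\|z-x\|\le\frac{L}{\sigma}\sqrt{\mathbb{E}\langle a,u\rangle^2}\,\|z-x\|=\frac{L}{\sigma}\|z-x\|.
\]
Taking the supremum over unit $a,b$ yields the operator-norm bound with constant $L/\sigma$, which is already smaller than $L\sqrt d/\sigma$ since $d\ge1$. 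The weaker constant $\sqrt d$ in the statement presumably comes from a cruder estimate such as $\mathbb{E}\|u\|\le\sqrt d$. Either way, the argument gives the inequality as stated.

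The main point needing care is the interchange of expectation and bilinear pairing, together with the integrability of $|\langle a,u\rangle|\,\|\nabla f(z+\sigma u)-\nabla f(x+\sigma u)\|$. Both are immediate from the pointwise bound $L\|z-x\|\,|\langle a,u\rangle|$, which is Gaussian-integrable. The Taylor formula itself needs only the continuity of $\nabla^2 f_\sigma$, which Theorem~\ref{twice_diff_lip} supplies.
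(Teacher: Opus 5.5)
Your proof is correct and follows essentially the paper's route: a Lipschitz bound on $\nabla^2 f_\sigma$ from the representation $\nabla^2 f_\sigma(x)=\frac{1}{\sigma}\mathbb{E}[u\nabla f(x+\sigma u)^T]$, then Taylor's theorem. The paper integrates twice, first for $\nabla f_\sigma$ and then for $f_\sigma$, while you use the $(1-t)$ kernel once; both give the factor $\frac16$.

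The one real difference is in the Lipschitz estimate. The paper bounds the operator norm by $\frac{1}{\sigma}\mathbb{E}[\|\nabla f(y+\sigma u)-\nabla f(x+\sigma u)\|\,\|u\|]\le \frac{L\sqrt d}{\sigma}\|y-x\|$. You test against unit vectors $a,b$ and use $\langle a,u\rangle\sim\mathcal{N}(0,1)$, which gives the dimension-free constant $\frac{L}{\sigma}\sqrt{2/\pi}$. So your argument shows the lemma actually holds with $\sqrt{2/\pi}$ in place of $\sqrt d$.
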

\begin{proof}
    Notice that for any $x, y \in \R^d$, we have
    \begin{align}
        \| \nabla^2 f_{\sigma} (y) - \nabla^2 f_{\sigma}(x)\|& = \frac{1}{\sigma}  \left\|\E_{u\sim\mathcal{N}(0,I)} [(\nabla f(y+\sigma u) - \nabla f(x+ \sigma u))u^T] \right\|\notag \\
        &\le \frac{L\|y-x\|}{\sigma}\E_{u\sim\mathcal{N}(0,I)} \|u\|, \label{diff_hessian}
    \end{align}
    where the last inequality holds because $f$ has Lipschitz gradient with constant $L$. Therefore, we have 
    \begin{align}
       \left\|\nabla f_{\sigma} (y) -\nabla f_{\sigma}(x) - \nabla^2 f_{\sigma}(x) (y-x)\right\| &\le \int_0^1 \| \nabla^2 f_{\sigma}(x+ \tau(y-x)) - \nabla^2 f_{\sigma}(x)\|d\tau \cdot \|y -x\| \nonumber \\
       &\le \frac{L\|y -x\|^2}{2\sigma} \E_{u\sim\mathcal{N}(0,I)}\|u\| \le \frac{L\|y -x\|^2\sqrt{d}}{2\sigma}, \nonumber
    \end{align}
    where the second inequality holds because of \eqref{diff_hessian} and the last inequality follows from \cite[Lemma~1]{NesterovGS}. Then, we have
    \begin{align}
         &\bigg| f_{\sigma}(y)-f_{\sigma}(x)-\langle\nabla f_{\sigma}(x),y-x\rangle-\frac{1}{2}\langle\nabla^{2}f_{\sigma}(x)(y-x),y-x\rangle \bigg| \nonumber \\
         &=\bigg| \int_0^1 \left(\langle \nabla f_\sigma(x + \tau(y-x)) - \nabla f_\sigma(x),y-x\rangle - \tau\langle\nabla^2 f_\sigma(x)(y-x),y-x\rangle\right) d\tau \bigg| \nonumber \\
         &\le\int_{0}^{1}\|\nabla f_{\sigma}(x+\tau(y-x))-\nabla f_{\sigma}(x)-\tau\nabla^{2}f_{\sigma}(x)(y-x)\|d\tau\cdot\|y-x\|  \nonumber  \\
         & \le \frac{L\|y-x\|^3\sqrt{d}}{2\sigma} \int_0^1 \tau^2 d\tau = 
         \frac{L\|y-x\|^3\sqrt{d}}{6\sigma}.  \nonumber    
    \end{align}
\end{proof}

The next lemma concerns the scaling that appears in the summand in \eqref{Hkdef}.
\begin{lemma}\label{lem2025102601}
Suppose that $f$ has Lipschitz gradient with constant $L > 0$ and $\sigma >0$. Then, for any $x\in \R^d$ and any $p\ge 1$, we have
 \begin{align*}
\mathbb{E}_{u\sim {\cal N}(0,I)}\left[\left|\frac{f(x+\sigma u)+f(x-\sigma u)-2f(x)}{2\sigma^{2}}\right|^{p}\right]\le \frac{L^p(d+2p)^p}{2^p}.
 \end{align*}
\end{lemma}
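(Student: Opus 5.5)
The plan is to bound the symmetric second difference pointwise in $u$ and then use a Gaussian moment bound. Since $f$ has $L$-Lipschitz gradient, the Taylor-type inequality used in the proof of Theorem~\ref{twice_diff_lip} (equivalently, \eqref{in_subdiff} with $t=\sigma$, $h'=\pm u$) gives
\[
|f(x+\sigma u)-f(x)-\sigma\langle\nabla f(x),u\rangle|\le \tfrac{L}{2}\sigma^2\|u\|^2,\qquad
|f(x-\sigma u)-f(x)+\sigma\langle\nabla f(x),u\rangle|\le \tfrac{L}{2}\sigma^2\|u\|^2 .
\]
Adding these, the linear terms cancel, so
\[
\left|\frac{f(x+\sigma u)+f(x-\sigma u)-2f(x)}{2\sigma^{2}}\right|\le \frac{L}{2}\|u\|^2\qquad\forall u\in\R^d .
\]
Note that this bound does not depend on $x$ or $\sigma$.

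Raising to the power $p$ and taking expectations reduces the claim to showing $\mathbb{E}_{u\sim\mathcal N(0,I)}[\|u\|^{2p}]\le (d+2p)^p$. I will verify this moment bound directly rather than cite \cite[Lemma~1]{NesterovGS}. The variable $\|u\|^2$ is chi-squared with $d$ degrees of freedom, so
\[
\mathbb{E}\|u\|^{2p}=2^p\,\frac{\Gamma(d/2+p)}{\Gamma(d/2)}.
\]
For integer $p$ this equals $\prod_{j=0}^{p-1}(d+2j)\le (d+2p)^p$ at once.

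For general real $p\ge1$, I write $p=m+r$ with $m=\lfloor p\rfloor$ and $r\in[0,1)$. Applying the functional recursion of $\Gamma$ for the integer part and log-convexity (Gautschi/Wendel) for the fractional part gives
\[
\frac{\Gamma(d/2+m+r)}{\Gamma(d/2+m)}\le (d/2+m)^r .
\]
Hence $\mathbb{E}\|u\|^{2p}\le \prod_{j<m}(d+2j)\cdot(d+2m)^r\le (d+2p)^p$.

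Alternatively, \cite[Lemma~1]{NesterovGS} states $\mathbb{E}\|u\|^{q}\le (q+d)^{q/2}$ for $q\ge2$; taking $q=2p$ gives the same bound directly. Multiplying by $(L/2)^p$ then yields $L^p(d+2p)^p/2^p$.

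The main obstacle is only the non-integer moment bound, and it is handled either by the log-convexity step above or by invoking Nesterov--Spokoiny.
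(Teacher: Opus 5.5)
Your proposal is correct and follows the paper's proof. It first establishes the pointwise bound $|f(x+\sigma u)+f(x-\sigma u)-2f(x)|\le L\sigma^2\|u\|^2$, which the paper obtains by writing the second difference as $\int_0^\sigma\langle\nabla f(x+tu)-\nabla f(x-tu),u\rangle\,dt$ rather than by adding two one-sided Taylor bounds (the routes are equivalent and give the same constant), and then applies $\mathbb{E}\|u\|^{2p}\le(d+2p)^p$ from \cite[Lemma~1]{NesterovGS}, for which your Gamma-function/Wendel argument is a valid self-contained substitute, including for non-integer $p$.
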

\begin{proof}
Notice that
\begin{align*}
&|f(x+\sigma u)+f(x-\sigma u)-2f(x)|=|f(x+\sigma u)-f(x)+f(x-\sigma u)-f(x)|\\
&=\left|\int_{0}^{\sigma}\langle \nabla f(x+tu),u\rangle dt+\int_{0}^{\sigma}\langle \nabla f(x-tu),-u\rangle dt\right|\\
&=\left|\int_{0}^{\sigma}\langle \nabla f(x+tu)-\nabla f(x-tu),u\rangle dt\right|\le\int_{0}^{\sigma} \|\nabla f(x+tu)-\nabla f(x-tu)\| dt\cdot \|u\|\\
&\le 2L\|u\|^2 \int_0^{\sigma} t dt = L\sigma^2\|u\|^2.
\end{align*}
This implies that
\begin{align*}
&\mathbb{E}_{u\sim {\cal N}(0,I)}\left[\left|\frac{f(x+\sigma u)+f(x-\sigma u)-2f(x)}{2\sigma^{2}}\right|^{p}\right]\\
&\le \E_{u\sim {\cal N}(0,I)} \left[ \left( \frac{L\sigma^2\|u\|^2}{2\sigma^2} \right)^p\right] = \frac{L^p}{2^p}\E_{u\sim {\cal N}(0,I)} [\|u\|^{2p}] \le \frac{L^p(d+2p)^p}{2^p},
\end{align*}
where the last inequality follows from  \cite[Lemma~1]{NesterovGS}.
\end{proof}

The next lemma is a well-known result concerning the optimality conditions for the subproblem \eqref{defsq}. We include it here for ease of reference.
\begin{lemma}[Optimality conditions for \eqref{defsq}]
  Consider \eqref{problem} and let $\{x^{k}\}$ be generated by Algorithm \ref{algorithm 01}. Then one has for all $k\ge 0$ that
  \begin{align}
    v^{k}+H^{k}(x^{k+1}-x^{k})+\frac{L\sqrt{d}}{2\sigma_{k}}(1+\|x^{k}\|)^{3}\|x^{k+1}-x^{k}\|(x^{k+1}-x^{k})=0\label{eq022301}
  \end{align}
  and
  \begin{align}
    H^{k}+\frac{L\sqrt{d}}{2\sigma_{k}}(1+\|x^{k}\|)^{3}\|x^{k+1}-x^{k}\|I\succeq0,\label{eq022302}
  \end{align}
  where $v^k$ and $H^k$ are defined in \eqref{vkdef} and \eqref{Hkdef}, respectively.
\end{lemma}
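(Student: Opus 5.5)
The plan is to reduce the statement to the classical first- and second-order optimality conditions for a global minimizer of a cubically regularized quadratic model. Fix $k$ and set $s:=x^{k+1}-x^k$ and $c_k:=\frac{L\sqrt d}{2\sigma_k}(1+\|x^k\|)^3>0$. With this notation,
\[
h_k(x^k+s)=\langle v^k,s\rangle+\tfrac12\langle H^kz,z\rangle\big|_{z=s}+\tfrac{c_k}{3}\|s\|^3 .
\]
The argument rests on two facts. First, $H^k$ is symmetric, since it is a real combination of the symmetric matrices $u_i^k(u_i^k)^T-I$. Second, $h_k$ is coercive, because the cubic term dominates the quadratic and linear terms. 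Coercivity guarantees that the argmin in \eqref{defsq} is nonempty, so $x^{k+1}$ is well defined. The two conditions \eqref{eq022301} and \eqref{eq022302} then follow from the standard characterization of global minimizers of cubic models due to Nesterov--Polyak and Cartis--Gould--Toint. I would reprove this characterization rather than just cite it, since it is short.

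\textbf{Step 1: first-order condition.} The function $s\mapsto\frac{c_k}{3}\|s\|^3$ is $C^1$ with gradient $c_k\|s\|s$; this holds at $s=0$ too. Hence $h_k$ is differentiable. Fermat's rule at the minimizer $s$ gives $v^k+H^ks+c_k\|s\|s=0$, which is exactly \eqref{eq022301}.

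\textbf{Step 2: second-order condition.} This is the main obstacle, because it is a global statement that holds even when $H^k$ is indefinite. I would use the standard trick of comparing $h_k$ at $s$ and at an arbitrary $y$, using the first-order condition to eliminate $v^k$. Substituting $v^k=-(H^k+c_k\|s\|I)s$ and doing a direct computation yields the identity
\[
h_k(x^k+y)-h_k(x^k+s)=\tfrac12\langle (H^k+c_k\|s\|I)(y-s),y-s\rangle+\tfrac{c_k}{6}(\|y\|-\|s\|)^2(\|y\|+2\|s\|).
\]
To verify it, expand both sides and use the expansion $\frac{c_k}{3}\|y\|^3-\frac{c_k}{3}\|s\|^3-c_k\|s\|\langle s,y-s\rangle$.

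Now take any $y$ with $\|y\|=\|s\|$. The cubic correction on the right vanishes, and the left side is nonnegative by optimality. Therefore
\[
\langle (H^k+c_k\|s\|I)w,w\rangle\ge0 \quad\text{for all } w=y-s \text{ with } \|y\|=\|s\| .
\]

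\textbf{Step 3: extending to all directions.} The set $\{y-s:\|y\|=\|s\|\}$ is not all of $\mathbb{R}^d$, so a covering argument is needed; this is the delicate step. If $s=0$, the first-order condition gives $v^k=0$, and minimality of $0$ for $\frac12\langle H^kz,z\rangle+\frac{c_k}{3}\|z\|^3$ forces $H^k\succeq0$ by a small-scale argument: take $z=tw$ and let $t\downarrow0$. This is \eqref{eq022302} in that case.

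If $s\ne0$, take any $w$ with $\langle w,s\rangle\neq0$ and set $y=s-\frac{2\langle w,s\rangle}{\|w\|^2}w$, the reflection of $s$ across $w^\perp$. Then $\|y\|=\|s\|$ and $y-s$ is parallel to $w$. So the quadratic form is nonnegative on the dense set $\{w:\langle w,s\rangle\ne0\}$, and by continuity it is nonnegative everywhere. This gives \eqref{eq022302}.
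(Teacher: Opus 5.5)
Your proposal is correct and is essentially the paper's argument: the paper obtains \eqref{eq022301} from Fermat's rule exactly as in your Step~1 and simply cites \cite[Proposition~1]{NesPol06} for \eqref{eq022302}, while your Steps~2--3 supply the standard proof of that cited fact (compare against points with $\|y\|=\|s\|$, reach each direction $w$ with $\langle w,s\rangle\neq 0$ by reflection, then conclude by density and continuity). One slip to fix: the cubic correction in your identity should be $\frac{c_k}{6}(\|y\|-\|s\|)^2(2\|y\|+\|s\|)$, not $(\|y\|+2\|s\|)$ (e.g.\ for $d=1$, $H^k=0$, $c_k=1$, $s=1$, $y=0$ the true gap is $\frac23$, whereas your formula gives $\frac56$), but this does not affect the proof because you only use the identity when $\|y\|=\|s\|$, where the correction term vanishes.
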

\begin{proof}
Relation \eqref{eq022301} follows from the first-order optimality condition of the subproblem \eqref{defsq}, while \eqref{eq022302} was proved in \cite[Proposition~1]{NesPol06}.
\end{proof}

The next lemma gives some bounds on the quality of the sample average approximations in \eqref{vkdef} and \eqref{Hkdef} for $\nabla f_{\sigma_k}(x^k)$ and $\nabla^2 f_{\sigma_k}(x^k)$, respectively. For its proof, we need to use the following simple bounds concerning a $u\sim {\cal N}(0,I)$:
\begin{align}\label{eq2025082504}
&\mathbb{E}_{u\sim {\cal N}(0,I)}\left[\|uu^{T}-I\|_{F}^{4}\right]=\mathbb{E}_{u\sim {\cal N}(0,I)}\left[(\|u\|^{4}-2\|u\|^{2}+d)^{2}\right]\notag\\
&=\mathbb{E}_{u\sim {\cal N}(0,I)}\left[((\|u\|^2-1)^2+d-1)^{2}\right]\overset{\rm (a)}\le\mathbb{E}_{u\sim {\cal N}(0,I)}\left[(\|u\|^{4}+d)^{2}\right]\notag\\
&=\mathbb{E}_{u\sim {\cal N}(0,I)}\left[\|u\|^{8}\right]+2d\mathbb{E}_{u\sim {\cal N}(0,I)}\left[\|u\|^{4}\right]+d^{2}\overset{\rm (b)}\le(8+d)^{4}+2d(4+d)^{2}+d^{2}\notag\\
&\le[(8+d)^{2}+d]^{2}\le(d+9)^{4},
\end{align}
and
\begin{align}\label{eq202508250422}
&\mathbb{E}_{u\sim {\cal N}(0,I)}\left[\|uu^{T}-I\|_{F}^{8}\right]=\mathbb{E}_{u\sim {\cal N}(0,I)}\left[(\|u\|^{4}-2\|u\|^{2}+d)^{4}\right]\notag\\
&=\mathbb{E}_{u\sim {\cal N}(0,I)}\left[((\|u\|^2-1)^2+d-1)^{4}\right]\overset{\rm (a)}\le\mathbb{E}_{u\sim {\cal N}(0,I)}\left[(\|u\|^{4}+d)^{4}\right]\notag\\
&\overset{\rm (c)}\le4\mathbb{E}_{u\sim {\cal N}(0,I)}\left[(\|u\|^{8}+d^2)^{2}\right]=4\mathbb{E}_{u\sim {\cal N}(0,I)}\left[\|u\|^{16}\right]+8d^2\mathbb{E}_{u\sim {\cal N}(0,I)}\left[\|u\|^{8}\right]+4d^{4}\notag\\
&\overset{\rm (b)}\le4[(16+d)^{8}+2d^2(8+d)^{4}+d^{4}]\le16(d+16)^{8},
\end{align}
where (a) holds because $d\ge 1$ and $(\|u\|^2-1)^2\le \|u\|^4 + 1$, (b) follows from \cite[Lemma~1]{NesterovGS} and (c) follows from the relation $(a+b)^2\le 2(a^2 + b^2)$ for any $a$, $b\in \mathbb{R}$.

From now on, we $\mathbb{E}_k$ to denote the conditional expectation given the entire history of the algorithm prior to the generation of the random samples at the $k$-th iteration. In particular, $x^k$, $\sigma_k$, $N_k$, and $J_k$ are fixed under $\mathbb{E}_k$, while the random samples generated at iteration $k$ are independent of the past.

\begin{lemma}[Bounds for sample average approximations]\label{lem2025080501}
Consider problem \eqref{problem}. Let $k\ge 0$ and $x^k$ be generated at the beginning of the $k$th iteration of Algorithm \ref{algorithm 01}. Let $v^k$ and $H^k$ be generated by \eqref{vkdef} and \eqref{Hkdef}, respectively. Then it holds that
\begin{align}
\mathbb{E}_{k}\left[\|\nabla f_{\sigma_{k}}(x^{k})-v^{k}\|^{2}\right] 
&\le \frac{D_1}{(k+1)^{\frac{4}{3}\beta}}\big(1+\|x^k\| \big)^2, \label{var_vk}
\end{align}
\begin{align}
\mathbb{E}_{k}\left[\|\nabla^{2} f_{\sigma_{k}}(x^{k})-H^{k}\|_{F}^{2}\right]\leq \frac{D_2}{(k+1)^{\frac{2}{3}\alpha}},\label{2ndexpectation}
\end{align}
and
\begin{align}
\mathbb{E}_{k}\left[\|\nabla^{2} f_{\sigma_{k}}(x^{k})-H^{k}\|_{F}^{3}\right]\leq \frac{D_3}{(k+1)^{\alpha}},\label{thirdexpectation}
\end{align}
where $D_1 := \frac{L^2(d+6)^3}{2}\gamma^2 + 4(d+4)D_0 $, $D_0 := \max \left\{\|\nabla f(0)\|^2, L^2 \right\}$, $D_2 := L^{2}(d+9)^{4}$ and $D_3 := 6dL^{3}(d+9)^{2}(d+16)^{4}$.
\end{lemma}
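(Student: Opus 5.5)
All three bounds will be obtained the same way. Conditionally on the history, each estimator is an average of i.i.d. unbiased summands. We bound the variance (or fourth moment, via Lemma~\ref{4moment}) of one summand, divide by the sample size, and then use Jensen/H\"older to pass to the power needed. Throughout, $x^k$ and $\sigma_k$ are fixed under $\mathbb{E}_k$.

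\textbf{Gradient bound \eqref{var_vk}.} Set $X_i := \frac{1}{\sigma_k}(f(x^k+\sigma_k u_i^k)-f(x^k))u_i^k$. Since $\mathbb{E}[u]=0$, Lemma~\ref{lem053101} gives $\mathbb{E}_k[X_i]=\nabla f_{\sigma_k}(x^k)$. By independence,
\[
\mathbb{E}_k\|\nabla f_{\sigma_k}(x^k)-v^k\|^2 \le \frac{1}{N_k}\,\mathbb{E}\|X_1\|^2 .
\]
To bound $\mathbb{E}\|X_1\|^2$, we write $f(x+\sigma u)-f(x)=\sigma\langle\nabla f(x),u\rangle + r$ with $|r|\le \frac{L}{2}\sigma^2\|u\|^2$. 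Then $(a+b)^2\le 2a^2+2b^2$ gives
\[
\mathbb{E}\|X_1\|^2\le 2\,\mathbb{E}[\langle \nabla f(x),u\rangle^2\|u\|^2] + \frac{L^2\sigma^2}{2}\mathbb{E}\|u\|^6 .
\]
The first term is $2(d+2)\|\nabla f(x)\|^2$. The second is at most $\frac{L^2\sigma^2}{2}(d+6)^3$ by \cite[Lemma~1]{NesterovGS}. Next, $\|\nabla f(x^k)\|^2\le (\|\nabla f(0)\|+L\|x^k\|)^2\le 2D_0(1+\|x^k\|)^2$, and $\sigma_k\le\gamma$. Finally, $N_k\ge (k+1)^{4\beta/3}$ and $1\le(1+\|x^k\|)^2$ yield exactly the constant $D_1$.

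\textbf{Hessian bound \eqref{2ndexpectation}.} Set
\[
Y_i:=\frac{f(x+\sigma u_i)+f(x-\sigma u_i)-2f(x)}{2\sigma^2}\,(u_iu_i^T-I),
\]
where $x=x^k$, $\sigma=\sigma_k$. Because $u$ and $-u$ have the same law and $\mathbb{E}[uu^T-I]=0$, the first equality in \eqref{GSHessian_Lip} gives $\mathbb{E}_k Y_i=\nabla^2 f_{\sigma_k}(x^k)$. The Frobenius variance of the average is then at most $\frac1{J_k}\mathbb{E}\|Y_1\|_F^2$. By Cauchy--Schwarz,
\[
\mathbb{E}\|Y_1\|_F^2\le \bigl(\mathbb{E}|s|^4\bigr)^{1/2}\bigl(\mathbb{E}\|uu^T-I\|_F^4\bigr)^{1/2},
\]
where $s$ is the scalar prefactor. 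Lemma~\ref{lem2025102601} with $p=4$ and \eqref{eq2025082504} bound this by $\frac{L^2(d+8)^2}{4}(d+9)^2\le L^2(d+9)^4=D_2$. Using $J_k\ge (k+1)^{2\alpha/3}$ finishes this part.

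\textbf{Third-moment bound \eqref{thirdexpectation}.} This is the step needing most care. Jensen gives
\[
\mathbb{E}\|\cdot\|_F^3\le(\mathbb{E}\|\cdot\|_F^4)^{3/4},
\]
so it suffices to show $\mathbb{E}_k\|\frac1{J_k}\sum_i (Y_i-\mathbb{E}Y_i)\|_F^4\lesssim J_k^{-2}$. Lemma~\ref{4moment} is scalar, so it cannot be applied to the Frobenius norm directly. I would apply it entrywise. With $Z_i^{(pq)}$ the $(p,q)$ entry of $Y_i-\mathbb{E}Y_i$,
\[
\Bigl\|\sum_i Z_i\Bigr\|_F^4=\Bigl(\sum_{p,q}\Bigl|\sum_i Z_i^{(pq)}\Bigr|^2\Bigr)^2\le d^2\sum_{p,q}\Bigl|\sum_iZ_i^{(pq)}\Bigr|^4 .
\]
Lemma~\ref{4moment} bounds each entry term by $9J_k^2\,\mathbb{E}|Z_1^{(pq)}|^4$. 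Summing over entries gives at most $9J_k^2\,\mathbb{E}\|Z_1\|_F^4$, and $\mathbb{E}\|Z_1\|_F^4\le 16\,\mathbb{E}\|Y_1\|_F^4$.

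It remains to bound $\mathbb{E}\|Y_1\|_F^4$. Cauchy--Schwarz gives
\[
\mathbb{E}\|Y_1\|_F^4\le(\mathbb{E}s^8)^{1/2}(\mathbb{E}\|uu^T-I\|_F^8)^{1/2}.
\]
Lemma~\ref{lem2025102601} with $p=8$ and \eqref{eq202508250422} control this by $\frac{L^4(d+16)^4}{16}\cdot 4(d+16)^4$. Dividing by $J_k^4$ and raising to the power $3/4$ gives a bound of the form $C\,J_k^{-3/2}\le C(k+1)^{-\alpha}$.

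The main obstacle is the bookkeeping: the dimension factors from the entrywise step and the numerical constants must be arranged to land on the stated $D_3=6dL^3(d+9)^2(d+16)^4$. That may require a sharper route, e.g. splitting the $L^4$ norm as $\mathbb{E}\|\cdot\|^2$ times a fourth-moment factor, or using the $(d+9)^2$ from \eqref{eq2025082504} for part of the estimate. I would adjust the split once the crude bound is in hand.
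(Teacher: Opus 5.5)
Your proofs of the gradient bound and of the Hessian second-moment bound are correct and follow the paper. The only difference is that you derive the second moment of the gradient summand by hand, obtaining $2(d+2)\|\nabla f(x^k)\|^2+\frac{L^2\sigma_k^2}{2}(d+6)^3$. The paper instead cites the first inequality in Nesterov--Spokoiny's Eq.~(35), which has $2(d+4)$. Either way the resulting constant is at most $D_1$.

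For the third-moment bound, write $R^k:=\nabla^2 f_{\sigma_k}(x^k)-H^k$. Your entrywise use of Lemma~\ref{4moment}, the factor $d^2$, the bound $\mathbb{E}\|Z_1\|_F^4\le 16\,\mathbb{E}\|Y_1\|_F^4$ and the bound $\mathbb{E}\|Y_1\|_F^4\le \frac{L^4(d+16)^8}{4}$ are exactly the paper's steps. Together they give
\[
\mathbb{E}_k\bigl[\|R^k\|_F^4\bigr]\le \frac{36\,d^2L^4(d+16)^8}{J_k^2}.
\]

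The gap is the final step. Raising this to the power $3/4$ gives $6\sqrt{6}\,d^{3/2}L^3(d+16)^6J_k^{-3/2}$. Since $\sqrt{6d}\,(d+16)^2>(d+9)^2$ for every $d\ge 1$, this is strictly larger than $D_3J_k^{-3/2}$. So your argument, as written, gives the right rate but does not prove \eqref{thirdexpectation} with the stated $D_3$. The constant matters, because $D_3$ feeds into $D_5$ and $D_6$ later.

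The fix is the first alternative you mentioned, and it is exactly what the paper does: use Cauchy--Schwarz instead of Jensen,
\[
\mathbb{E}_k\bigl[\|R^k\|_F^3\bigr]\le\Bigl(\mathbb{E}_k\bigl[\|R^k\|_F^2\bigr]\Bigr)^{1/2}\Bigl(\mathbb{E}_k\bigl[\|R^k\|_F^4\bigr]\Bigr)^{1/2}\le \frac{L(d+9)^2}{\sqrt{J_k}}\cdot\frac{6dL^2(d+16)^4}{J_k}.
\]
The first factor comes from your own second-moment bound $\mathbb{E}_k[\|R^k\|_F^2]\le L^2(d+9)^4/J_k$. This replaces part of the $(d+16)$-dependence with the cheaper $(d+9)^2$ and removes the extra $\sqrt{d}$. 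The right-hand side equals $D_3J_k^{-3/2}$, and $J_k^{3/2}\ge (k+1)^{\alpha}$ then gives \eqref{thirdexpectation} with exactly the stated $D_3$.
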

\begin{remark}[Dependence on $\|x^k\|^2$]\label{rem:dependence}
Note that the right hand side of \eqref{var_vk} depends on $\|x^k\|^2$. We claim that such a dependence cannot be relaxed in general. To see this, consider $f:\R\to \R$ given by $f(x)=x^{2}$. Then $f$ has $2$-Lipschitz continuous gradient and a direct computation shows
\begin{align*}
&N_k\mathbb{E}_{k}\left[(f'_{\sigma_{k}}(x^{k})-{v}^{k})^{2}\right] = N_k\mathbb{E}_{k}\left[\left(f'_{\sigma_{k}}(x^{k})-\frac1{N_k}\sum_{i=1}^{N_k}\widehat v_i^k\right)^{2}\right]\\
 &=\mathbb{E}_{u\sim {\cal N}(0,1)}\left[(f'_{\sigma_{k}}(x^{k})-\widehat{v}^{k})^{2}\right]=\mathbb{E}_{u\sim {\cal N}(0,1)}[(\widehat{v}^{k})^{2}]-(f'_{\sigma_{k}}(x^{k}))^2\\
&=\mathbb{E}_{u\sim {\cal N}(0,1)}\left[\left(\frac{f(x^{k}+\sigma_{k}u)-f(x^{k})}{\sigma_{k}}u\right)^{2}\right]-(\mathbb{E}_{u\sim {\cal N}(0,1)}[f'(x^{k}+\sigma_{k} u)])^{2}\\
&=\mathbb{E}_{u\sim {\cal N}(0,1)}\left[\left|\frac{(x^{k}+\sigma_{k}u)^{2}-(x^{k})^{2}}{\sigma_{k}}u\right|^{2}\right]-\left|\mathbb{E}_{u\sim {\cal N}(0,1)}[2(x^{k}+\sigma_{k} u)]\right|^{2}\\
&=\mathbb{E}_{u\sim {\cal N}(0,1)}\left[\left|(2x^{k}+\sigma_{k} u)u^{2}\right|^{2}\right]-4(x^{k})^{2}\\
&=\mathbb{E}_{u\sim {\cal N}(0,1)}\left[(4(x^{k})^{2}+4\sigma_{k} ux^{k}+\sigma_{k}^{2}u^{2})u^{4}\right]-4(x^{k})^{2}\\
&=4\left[\mathbb{E}_{u\sim {\cal N}(0,1)}(u^{4})-1\right](x^{k})^{2}+\sigma_{k}^{2}\mathbb{E}_{u\sim {\cal N}(0,1)}[u^{6}]=8(x^{k})^{2}+15\sigma_{k}^{2},
\end{align*}
where $\widehat{v}_i^{k}:=\sigma_{k}^{-1}[f(x^{k}+\sigma_{k} u^k_i)-f(x^{k})]u^k_i$,   $\widehat{v}^{k}:=\sigma_{k}^{-1}[f(x^{k}+\sigma_{k} u)-f(x^{k})]u$ with $u\sim\mathcal{N}(0,1)$, the second equality holds because $u_i^k$ are i.i.d. samples with $\mathbb{E}_{k}[\widehat v_i^k] = f'_{\sigma_k}(x^k)$ (see \eqref{GSgradient}), the third equality holds because $\mathbb{E}_{k}[\widehat v^k] = f'_{\sigma_k}(x^k)$  (see \eqref{GSgradient}), the interchange of differentiation and expectation is valid in the fourth equality thanks to \cite[Theorem~3.6(i)]{Pong2024}, and we used \cite[Example 21.1]{Bill2008} in the last equality. This shows that a bound depending on $\|x^k\|^2$ is inevitable in general.
\end{remark}
\begin{proof}[Proof of Lemma~\ref{lem2025080501}]
Notice that
\begin{align}\label{eq2025082501}
&\mathbb{E}_{k}\left[\|\nabla f_{\sigma_{k}}(x^{k})-v^{k}\|^{2}\right]
=\mathbb{E}_{k}\left[\left\|\nabla f_{\sigma_{k}}(x^{k})-\frac{\sum_{i=1}^{N_{k}}\widehat{v}_{i}^{k}}{N_{k}}\right\|^{2}\right]\notag\\
&=\frac{1}{(N_{k})^{2}}\mathbb{E}_{k}\left[\left\|\sum_{i=1}^{N_{k}}(\nabla f_{\sigma_{k}}(x^{k})-\widehat{v}_{i}^{k})\right\|^{2}\right]
=\frac{1}{(N_{k})^{2}}\sum_{i=1}^{N_{k}}\mathbb{E}_{k}[\|(\nabla f_{\sigma_{k}}(x^{k})-\widehat{v}_{i}^{k}\|^2]\notag\\
&=\frac{1}{N_{k}}\mathbb{E}_{u\sim {\cal N}(0,I)}[\|\nabla f_{\sigma_{k}}(x^{k})-\widehat{v}^{k}\|^{2}] =\frac{1}{N_{k}}\left[\mathbb{E}_{u\sim {\cal N}(0,I)}(\|\widehat{v}^{k}\|^{2})-\|\nabla f_{\sigma_{k}}(x^{k})\|^{2}\right]\notag\\
&\leq \frac{1}{N_{k}}\mathbb{E}_{u\sim {\cal N}(0,I)}[\|\widehat{v}^{k}\|^{2}]\le \frac{\sigma_k^2}{2N_k}L^2(d+6)^3+ \frac{2(d+4)}{N_k}\|\nabla f(x^k) \|^2,
\end{align}
where $\widehat{v}_{i}^{k}:=\sigma_{k}^{-1}[f(x^{k}+\sigma_{k} u_{i}^{k})-f(x^{k})]u_{i}^{k}$, $\widehat{v}^{k}:=\sigma_{k}^{-1}[f(x^{k}+\sigma_{k} u)-f(x^{k})]u$ with $u\sim\mathcal{N}(0,I)$, the third and fourth equalities hold because $u_i^k$ are i.i.d. samples with $\mathbb{E}_{k}[\widehat v_i^k] = \nabla f_{\sigma_k}(x^k)$ (see \eqref{GSgradient}), the fifth equality holds because $\mathbb{E}_{u\sim {\cal N}(0,I)}[\widehat v^k] = \nabla f_{\sigma_k}(x^k)$  (see \eqref{GSgradient}) and the last inequality holds because of \cite[Theorem~4]{NesterovGS}.\footnote{Although this theorem was stated for a convex $f$, the inequality we need (i.e., the first inequality in \cite[Eq.~(35)]{NesterovGS}) does not rely on convexity.} Since $f$ has Lipschitz gradient with constant $L$, we have
\[
    \|\nabla f(x^k) \| \le \|\nabla f(0)\| + L\|x^k\|.
\]
Using this together with \eqref{eq2025082501}, we have 
\[
    \mathbb{E}_{k}\left[\|\nabla f_{\sigma_{k}}(x^{k})-v^{k}\|^{2}\right]\leq \frac{\sigma_k^2}{2(k+1)^{\frac{4}{3}\beta}}L^2(d+6)^3+ \frac{4(d+4)}{(k+1)^{\frac{4}{3}\beta}}\big(\| \nabla f(0)\|^2+ L^2\|x^k\|^2 \big).
\]
Now, in view of the definition of $D_0$, we have 
\begin{align*}
    \mathbb{E}_{k}\left[\|\nabla f_{\sigma_{k}}(x^{k})-v^{k}\|^{2}\right] &\leq \frac{\sigma_k^2}{2(k+1)^{\frac{4}{3}\beta}}L^2(d+6)^3+ \frac{4(d+4)}{(k+1)^{\frac{4}{3}\beta}}D_0\big(1+\|x^k\|^2 \big)  \\
    &\le \frac{\gamma^2}{2(k+1)^{\frac{4}{3}\beta}}L^2(d+6)^3+ \frac{4(d+4)}{(k+1)^{\frac{4}{3}\beta}}D_0\big(1\!+\!\|x^k\|^2 \big)\\
    &\le \frac{D_1}{(k+1)^{\frac{4}{3}\beta}}\big(1\!+\!\|x^k\|^2 \big) \le \frac{D_1}{(k+1)^{\frac{4}{3}\beta}}\big(1\!+\!\|x^k\|\big)^2, 
\end{align*}
where the second inequality holds because $\sigma_k \le \gamma$ for all $k\ge 0$, and the third inequality holds because of the definition of $D_1$. This proves \eqref{var_vk}.

Next, we prove the bound for $\mathbb{E}_{k}\left[\|\nabla^{2} f_{\sigma_{k}}(x^{k})-H^{k}\|_{F}^{2}\right]$. Similarly to \eqref{eq2025082501}, we can
derive
\begin{align}
\mathbb{E}_{k}\left[\|\nabla^{2} f_{\sigma_{k}}(x^{k})-H^{k}\|_{F}^{2}\right]\leq \frac{1}{J_{k}}\mathbb{E}_{u\sim\mathcal{N}(0,I)}[\|\widehat{H}^{k}\|_{F}^{2}],\label{1stboundH}
\end{align}
where $\widehat{H}^{k}:=\frac{1}{2\sigma_{k}^{2}}[f(x^{k}+\sigma_{k} u)+f(x^{k}-\sigma_{k} u)-2f(x^{k})](uu^{T}-I)$ with $u\sim\mathcal{N}(0,I)$, thanks to the fact that $\mathbb{E}_{u\sim\mathcal{N}(0,I)}[\widehat H^k] = \nabla^{2} f_{\sigma_{k}}(x^{k})$ (see the first equality in \eqref{GSHessian_Lip}). Notice that
\begin{align}\label{eq2025082502}
&\mathbb{E}_{u\sim\mathcal{N}(0,I)}[\|\widehat{H}^{k}\|_{F}^{2}]=\mathbb{E}_{u\sim\mathcal{N}(0,I)}\left[\left|\frac{f(x^{k}+\sigma_{k} u)+f(x^{k}-\sigma_{k} u)-2f(x^{k})}{2\sigma_{k}^{2}}\right|^{2}\|uu^{T}-I\|_{F}^{2}\right]\notag\\
&\le\left[\mathbb{E}_{u\sim\mathcal{N}(0,I)}\left(\left|\frac{f(x^{k}+\sigma_{k} u)+f(x^{k}-\sigma_{k} u)-2f(x^{k})}{2\sigma_{k}^{2}}\right|^{4}\right)\right]^{\frac{1}{2}}\notag\\
& \ \ \ \ \ \times
\left[\mathbb{E}_{u\sim\mathcal{N}(0,I)}\left(\|uu^{T}-I\|_{F}^{4}\right)\right]^{\frac{1}{2}}.
\end{align}
On the other hand, by Lemma \ref{lem2025102601}, we know that
\begin{align}\label{eq2025082503}
\mathbb{E}_{u\sim\mathcal{N}(0,I)}\left[\left|\frac{f(x^{k}+\sigma_{k} u)+f(x^{k}-\sigma_{k} u)-2f(x^{k})}{2\sigma_{k}^{2}}\right|^{4}\right]\le 2^{-4}L^{4}(d+8)^{4}.
\end{align}
Thus, combining \eqref{eq2025082502}, \eqref{eq2025082503} and \eqref{eq2025082504} with \eqref{1stboundH}, one has
\begin{align}
\mathbb{E}_{k}\left[\|\nabla^{2} f_{\sigma_{k}}(x^{k})-H^{k}\|_{F}^{2}\right]\leq \frac{1}{J_k}\cdot 2^{-2}L^{2}(d+8)^{2}(d+9)^{2}\le \frac{1}{J_k}\cdot L^{2}(d+9)^{4}.\label{2nddisplaying}
\end{align}
The relation \eqref{2ndexpectation} now follows immediately from the above display and the definition of $J_k$.

Finally, we prove the bound for $\mathbb{E}_{k}\left[\|\nabla^2 f_{\sigma_{k}}(x^{k})-H^{k}\|_F^{3}\right]$. Notice that
\begin{align}\label{eq2025081101}
\mathbb{E}_{k}\left[\|\nabla^2 f_{\sigma_{k}}(x^{k})-H^{k}\|_{F}^{3}\right]&\leq \left[\mathbb{E}_{k}(\|\nabla^{2}f_{\sigma_{k}}(x^{k})-H^{k}\|_{F}^{2})\right]^{1/2}
\cdot \left[\mathbb{E}_{k}(\|\nabla^{2}f_{\sigma_{k}}(x^{k})-H^{k}\|_{F}^{4})\right]^{1/2}\notag\\
&\le \frac{L(d+9)^{2}}{\sqrt{J_{k}}}\cdot \left[\mathbb{E}_{k}(\|\nabla^{2}f_{\sigma_{k}}(x^{k})-H^{k}\|_{F}^{4})\right]^{1/2},
\end{align}
where the last inequality follows from \eqref{2nddisplaying}. On the other hand, we know that
\begin{align}\label{eq2025081102}
&\mathbb{E}_{k}[\|\nabla^{2}f_{\sigma_{k}}(x^{k})-H^{k}\|_{F}^{4}]\notag\\
&=\frac{1}{J_{k}^{4}}\mathbb{E}_{k}\left[\left\|\sum_{i=1}^{J_{k}}(\nabla^{2}f_{\sigma_{k}}(x^{k})
-\widehat{H}_{i}^{k})\right\|_{F}^{4}\right]=\frac{1}{J_{k}^{4}}\mathbb{E}_{k}\left[\sum_{p,q=1}^{d}\left(\sum_{i=1}^{J_{k}}(\nabla^{2}f_{\sigma_{k}}(x^{k})-\widehat{H}_{i}^{k})_{p,q}\right)^{2}\right]^{2}\notag\\
&\le\frac{d^{2}}{J_{k}^{4}}\mathbb{E}_{k}\!\left[\sum_{p,q=1}^{d}\!\!\left(\sum_{i=1}^{J_{k}}(\nabla^{2}f_{\sigma_{k}}(x^{k})-\widehat{H}_{i}^{k})_{p,q}\right)^{4}\right]\!=\!\frac{d^{2}}{J_{k}^{4}}\sum_{p,q=1}^{d}\mathbb{E}_{k}\!\!\left[\left(\sum_{i=1}^{J_{k}}(\nabla^{2}f_{\sigma_{k}}(x^{k})-\widehat{H}_{i}^{k})_{p,q}\right)^{4}\right]\notag\\
&\overset{\rm(a)}\le \frac{d^{2}}{J_{k}^{4}}\left[\sum_{p,q=1}^{d}\left(3\sum_{i=1}^{J_{k}}(\mathbb{E}_{k}[(\nabla^{2}f_{\sigma_{k}}(x^{k})-\widehat{H}_{i}^{k})_{p,q}^{4}])^{1/2}\right)^{2}\right]\notag\\
&\le\frac{9d^{2}}{J_{k}^{3}}\left[\sum_{p,q=1}^{d}\sum_{i=1}^{J_k}\mathbb{E}_k[(\nabla^{2}f_{\sigma_{k}}(x^{k})-\widehat{H}_i^{k})_{p,q}^{4}]\right]\overset{\rm (b)}=\frac{9d^{2}}{J_{k}^{2}}\left[\sum_{p,q=1}^{d}\mathbb{E}_{u\sim\mathcal{N}(0,I)}[(\nabla^{2}f_{\sigma_{k}}(x^{k})-\widehat{H}^{k})_{p,q}^{4}]\right]\notag\\
&=\frac{9d^{2}}{J_{k}^{2}}\mathbb{E}_{u\sim\mathcal{N}(0,I)}\left[\sum_{p,q=1}^{d}\!(\nabla^{2}f_{\sigma_{k}}(x^{k})-\widehat{H}^{k})_{p,q}^{4}\right]\notag\\
&\le\frac{9d^{2}}{J_{k}^{2}}\mathbb{E}_{u\sim\mathcal{N}(0,I)}\left[\|\nabla^{2}f_{\sigma_{k}}(x^{k})-\widehat{H}^{k}\|_{F}^{4}\right],
\end{align}
where $\widehat{H}_{i}^{k}:=2^{-1}\sigma_{k}^{-2}[f(x^{k}+\sigma_{k} u_{i}^{k})+f(x^{k}-\sigma_{k} u_{i}^{k})-2f(x^{k})](u_{i}^{k}(u_{i}^{k})^{T}-I)$, $\widehat{H}^{k}:=2^{-1}\sigma_{k}^{-2}[f(x^{k}+\sigma_{k} u)+f(x^{k}-\sigma_{k} u)-2f(x^{k})](uu^{T}-I)$ with $u\sim\mathcal{N}(0,I)$, and we used Lemma \ref{4moment} and the fact that $u_i^k$ are i.i.d. samples with $\mathbb{E}_{k}[\widehat H_i^k] = \nabla^{2} f_{\sigma_{k}}(x^{k})$ (see the first equality in \eqref{GSHessian_Lip}) in {\rm(a)}, and we used the fact that  $u_i^k$ are i.i.d. samples in (b). Now, it suffices to derive an upper bound for $\mathbb{E}_{u\sim\mathcal{N}(0,I)}\left[\|\nabla^{2}f_{\sigma_{k}}(x^{k})-\widehat{H}^{k}\|_{F}^{4}\right]$. Notice that
\begin{align}\label{eq2025081103}
&\mathbb{E}_{u\sim\mathcal{N}(0,I)}\left[\|\nabla^{2}f_{\sigma_{k}}(x^{k})-\widehat{H}^{k}\|_{F}^{4}\right]\le 2^{3}\mathbb{E}_{u\sim\mathcal{N}(0,I)}\left[\|\nabla^{2}f_{\sigma_{k}}(x^{k})\|_{F}^{4}+\|\widehat{H}^{k}\|_{F}^{4}\right]\notag\\
&= 2^{3}\mathbb{E}_{u\sim\mathcal{N}(0,I)}\left[\|\widehat{H}^{k}\|_{F}^{4}\right]
+2^{3}\|\nabla^{2}f_{\sigma_{k}}(x^{k})\|_{F}^{4}\overset{\rm (a)}\le2^{4}\mathbb{E}_{u\sim\mathcal{N}(0,I)}\left[\|\widehat{H}^{k}\|_{F}^{4}\right]\notag\\
&\le2^{4}\left[\mathbb{E}_{u\sim\mathcal{N}(0,I)}(|2^{-1}\sigma_{k}^{-2}(f(x^{k}+\sigma_{k} u)+f(x^{k}-\sigma_{k} u)-2f(x^{k}))|^{8})\right]^{1/2}\notag
\\
& \ \ \ \ \ \ \ \times \left[\mathbb{E}_{u\sim\mathcal{N}(0,I)}(\|uu^{T}-I\|_{F}^{8})\right]^{1/2}\notag\\
&\overset{\rm (b)}\le 2^{4}[2^{-8}L^{8}(d+16)^{8}]^{\frac{1}{2}}2^{2}(d+16)^{4}\le 4L^{4}(d+16)^{8},
\end{align}
where (a) holds because the fact that $\mathbb{E}_{u\sim\mathcal{N}(0,I)}[\widehat H^k] = \nabla^{2} f_{\sigma_{k}}(x^{k})$ (see the first equality in \eqref{GSHessian_Lip})
implies that
\begin{align*}
0\le \mathbb{E}_{u\sim\mathcal{N}(0,I)}[\|\nabla^2 f_{\sigma_{k}}(x^{k})-\widehat H^{k}\|_F^{2}] =\mathbb{E}_{u\sim\mathcal{N}(0,I)}(\|\widehat H^{k}\|_F^{2})-\left\|\nabla^2 f_{\sigma_{k}}(x^{k})\right\|_F^{2},
\end{align*}
which in turn implies that 
\[
\|\nabla^{2}f_{\sigma_{k}}(x^{k})\|_{F}^{4} = (\|\nabla^{2}f_{\sigma_{k}}(x^{k})\|_{F}^{2})^2 \le (\mathbb{E}_{u\sim\mathcal{N}(0,I)}(\|\widehat H^{k}\|_F^{2}))^2\le \mathbb{E}_{u\sim\mathcal{N}(0,I)}(\|\widehat H^{k}\|_F^{4}),
\]
while (b) holds in view of \eqref{eq202508250422} and upon applying Lemma~\ref{lem2025102601} with $p = 8$.
 Combining \eqref{eq2025081101}, \eqref{eq2025081102} and \eqref{eq2025081103} with the definition of $J_k$, one can obtain \eqref{thirdexpectation}.
\end{proof}

\subsection{Convergence analysis}\label{sec:alg}

In this subsection, we present our convergence analysis for Algorithm~\ref{algorithm 01}. We impose the following assumptions on the input parameters of Algorithm~\ref{algorithm 01}.
\begin{assumption}\label{Ass_para}
The positive parameters $\rho$, $\alpha$ and $\beta$ in Algorithm~\ref{algorithm 01} satisfy $\rho\in(0,\frac{1}{2})$, $2\rho+\alpha>1$ and $\frac{\rho}{2}+\beta>1$.
\end{assumption}

\begin{lemma}\label{lem042301}
Consider problem \eqref{problem} and suppose that Assumption~\ref{Ass_para} holds. Let $\{x^{k}\}$ be generated by Algorithm \ref{algorithm 01}. Then we have for any $T\ge 1$ that
 \begin{align}\label{bd_diff_xk}
    \sigma_{k_{*}}^{-1}\mathbb{E}\left[\|x^{k_{*}+1}-x^{k_{*}}\|^{3}\right]&=\min_{\lfloor\frac{T}{2}\rfloor\le k\le T}\sigma_{k}^{-1}\mathbb{E}\left[\|x^{k+1}-x^{k}\|^{3}\right]\notag\\
    &\le\frac{72}{L\sqrt{d}T}\left(f_{\gamma}(x^0) - f_*+ D_6\right),
  \end{align}
  and
  \begin{align}
      \E [f_{\sigma_k}(x^k)] - f_{\gamma}(x^0) \le D_6 \ \ \ \forall k\ge 0,\label{bd_f}
  \end{align}
   where $k_{*}:=\max\{l:l\in\arg\min_{\lfloor\frac{T}{2}\rfloor\le  k \le T}\sigma_{k}^{-1}\mathbb{E}\left[\|x^{k+1}-x^{k}\|^{3}\right]\}$,
   \begin{align}\label{defD6}
   \begin{array}{c}
       \displaystyle D_4 := \frac{2^{\frac{\rho}{2}+1}\gamma^{\frac{1}{2}}D_{1}^{\frac{3}{4}}}{L^{\frac{1}{2}}d^{\frac{1}{4}}} \frac{\beta+ \frac{\rho}{2}}{\beta + \frac{\rho}{2}-1}, \quad D_5 := \frac{384\gamma^2D_3}{L^2d}\frac{\alpha+2\rho} {\alpha+2\rho-1},\\
       \displaystyle D_6 := D_4+D_5+ \frac{\gamma^2Ld}{2},
    \end{array}
   \end{align}
   and $D_1$, $D_3$ are defined in Lemma~\ref{lem2025080501}.
\end{lemma}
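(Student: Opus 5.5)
I will establish a one-step expected descent inequality for the smoothed objective and then telescope it. Fix $k\ge0$ and write $s_k:=x^{k+1}-x^k$ and $c_k:=\frac{L\sqrt d}{\sigma_k}(1+\|x^k\|)^3$.

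\textbf{Step 1 (one-step model inequality).} Apply Lemma~\ref{lem:cubic_descent} with $\sigma=\sigma_k$, $x=x^k$, $y=x^{k+1}$. Since $(1+\|x^k\|)^3\ge1$, the cubic term is at most $\frac{c_k}{6}\|s_k\|^3$. Replace $\nabla f_{\sigma_k}(x^k)$ by $v^k$ and $\nabla^2 f_{\sigma_k}(x^k)$ by $H^k$, keeping error terms. Then use the optimality conditions \eqref{eq022301}--\eqref{eq022302}. Taking the inner product of \eqref{eq022301} with $s_k$ and using \eqref{eq022302} gives the standard Nesterov--Polyak bound
\[
\langle v^k,s_k\rangle+\tfrac12\langle H^ks_k,s_k\rangle+\tfrac{c_k}{6}\|s_k\|^3\le-\tfrac{c_k}{12}\|s_k\|^3 .
\]
Hence
\[
f_{\sigma_k}(x^{k+1})\le f_{\sigma_k}(x^k)-\tfrac{c_k}{12}\|s_k\|^3+\|\nabla f_{\sigma_k}(x^k)-v^k\|\|s_k\|+\tfrac12\|\nabla^2f_{\sigma_k}(x^k)-H^k\|_F\|s_k\|^2 .
\]

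\textbf{Step 2 (absorbing the errors).} Use Young's inequality to absorb the two error terms into a fraction of the cubic term. For the gradient error, Young with exponents $(3/2,3)$ gives
\[
\|e_k\|\|s_k\|\le \tfrac{c_k}{72}\|s_k\|^3+C\,c_k^{-1/2}\|e_k\|^{3/2},
\]
and $c_k^{-1/2}\le \sigma_k^{1/2}(L\sqrt d)^{-1/2}(1+\|x^k\|)^{-3/2}$. The factor $(1+\|x^k\|)^{-3/2}$ cancels the $(1+\|x^k\|)^{3/2}$ that appears when we take $\mathbb{E}_k$ and apply Jensen to \eqref{var_vk}. This produces a term of order $\gamma^{1/2}D_1^{3/4}(L\sqrt d)^{-1/2}k^{-\rho/2}(k+1)^{-\beta}$. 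For the Hessian error, Young with exponents $(3,3/2)$ gives $\tfrac{c_k}{72}\|s_k\|^3+C c_k^{-2}\|E_k\|_F^3$, where $c_k^{-2}\le\sigma_k^2/(L^2d)$. Combined with \eqref{thirdexpectation}, this yields a term of order $\gamma^2D_3(L^2d)^{-1}k^{-2\rho}(k+1)^{-\alpha}$. After absorption we obtain
\[
\mathbb{E}f_{\sigma_k}(x^{k+1})\le \mathbb{E}f_{\sigma_k}(x^k)-\tfrac{L\sqrt d}{72\sigma_k}\mathbb{E}\|s_k\|^3+a_k,
\]
with $\sum_k a_k\le D_4+D_5$. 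This summability is exactly Assumption~\ref{Ass_para}: the series $\sum k^{-(\beta+\rho/2)}$ and $\sum k^{-(\alpha+2\rho)}$ are bounded by integral comparisons, which produce the factors $\frac{p}{p-1}$ in $D_4$ and $D_5$.

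\textbf{Step 3 (changing $\sigma$).} We need to pass from $f_{\sigma_k}(x^{k+1})$ to $f_{\sigma_{k+1}}(x^{k+1})$. The standard GS bound for $L$-smooth $f$ (obtained by symmetrizing $u$) is
\[
|f_{\sigma}(x)-f_{\sigma'}(x)|\le\tfrac{Ld}{2}|\sigma^2-\sigma'^2|.
\]
Because $\sigma_k$ is nonincreasing, these differences telescope to at most $\tfrac{Ld}{2}\gamma^2$. Summing from $0$ to $k-1$ gives \eqref{bd_f} with $D_6$. Summing from $\lfloor T/2\rfloor$ to $T$, and using $\mathbb{E}f_{\sigma_{T+1}}(x^{T+1})\ge f_*$ (valid since $f_\sigma\ge f_*$), gives
\[
\sum_{k=\lfloor T/2\rfloor}^T\sigma_k^{-1}\mathbb{E}\|s_k\|^3\le \tfrac{72}{L\sqrt d}\bigl(f_\gamma(x^0)-f_*+D_6\bigr).
\]
For this we also use \eqref{bd_f} at $k=\lfloor T/2\rfloor$, being careful not to double-count $D_6$. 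We may instead sum from $0$ to $T$, which yields a bound $\le f_\gamma(x^0)-f_*+D_6$ directly. The window contains at least $T/2$ terms, so the minimum is bounded by $1/(T/2)$ times the sum. The resulting factor of $2$ must fit into the constant $72$, so the descent coefficient should be $1/36$: I will take the Young fractions $c_k/72$ each, so that $1/12-2/72=1/18\ge1/36$. This gives \eqref{bd_diff_xk}.

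\textbf{Main obstacle.} The hardest part is the gradient-error absorption. The $(1+\|x^k\|)^3$ scaling must exactly cancel the $(1+\|x^k\|)$ growth in \eqref{var_vk} inside the conditional expectation, with the right powers and under $\mathbb{E}_k$. The $k^{-\rho/2}$ bookkeeping must also come out so that $\beta+\rho/2>1$ gives summability. Matching the explicit constants, including the $2^{\rho/2+1}$ in $D_4$ that comes from $k^{-\rho}\le 2^{\rho}(k+1)^{-\rho}$, is routine but delicate.
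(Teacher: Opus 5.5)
Your outline follows the paper's proof essentially step for step. The shared steps are: the cubic descent lemma; the bound $h_k(x^{k+1})\le-\frac{c_k}{12}\|s_k\|^3$ from \eqref{eq022301}--\eqref{eq022302}; Young absorption of the two error terms; Jensen with \eqref{var_vk} and \eqref{thirdexpectation}, including the $(1+\|x^k\|)^{3/2}$ cancellation you describe; the $\frac{Ld}{2}(\sigma_k^2-\sigma_{k+1}^2)$ bound for changing the smoothing level; telescoping from $0$ to $T$ using $f_{\sigma_{T+1}}\ge f_*$; and averaging over the at least $T/2$ indices of the window. For the smoothing-level bound, the paper imports it from the literature. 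Your symmetrization argument is a valid self-contained replacement: $\frac{d}{ds}f_s(x)=\frac12\mathbb{E}\langle\nabla f(x+su)-\nabla f(x-su),u\rangle$, whose absolute value is at most $Lsd$.

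The one step that fails as you specified it is the Young split, and it matters because the lemma has explicit constants. If you absorb only $\frac{c_k}{72}\|s_k\|^3$ from the gradient-error term, Young's inequality forces the constant $\frac23\sqrt{24}=\frac{4\sqrt6}{3}\approx 3.27$ in front of $c_k^{-1/2}\|e_k\|^{3/2}$. But $D_4$, through its factor $2^{\frac{\rho}{2}+1}$, only has room for a constant at most $2$. So your claim $\sum_k a_k\le D_4+D_5$ is false; you only get $\sum_k a_k\le\frac{2\sqrt6}{3}D_4+D_5$.

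For \eqref{bd_diff_xk} this happens to be survivable, because your descent coefficient $\frac1{18}$ leaves slack: $18\cdot\frac{2\sqrt6}{3}=12\sqrt6<36$. However, \eqref{bd_f} discards the descent term and needs the error sum itself to be bounded by $D_4+D_5$. So with your split, \eqref{bd_f} does not follow with the stated $D_6$.

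The fix is the paper's split:
\begin{itemize}
\item absorb $\frac{c_k}{24}\|s_k\|^3$ from the gradient term (constant $\frac23\sqrt8<2$, hence contributing at most $D_4$);
\item absorb $\frac{c_k}{72}\|s_k\|^3$ from the Hessian term (constant $96$, giving $\frac{96\sigma_k^2}{L^2d}\|E_k\|_F^3$ and hence exactly $D_5$).
\end{itemize}
This leaves $\frac1{12}-\frac1{24}-\frac1{72}=\frac1{36}$, which is precisely the coefficient that the factor $2$ from the window turns into $72$. More generally, with the Hessian fraction $\frac1{72}$, any gradient fraction $\eta c_k$ with $\eta\in[\frac1{27},\frac1{24}]$ works. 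Relatedly, the coefficient $\frac{L\sqrt d}{72\sigma_k}$ in your Step 2 display would only yield $144$ in \eqref{bd_diff_xk}; it should read $\frac{L\sqrt d}{36\sigma_k}$.
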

\begin{proof}
  From Lemma~\ref{lem:cubic_descent}, we have
  \begin{align}
   &f_{\sigma_{k}}(x^{k+1})\le f_{\sigma_{k}}(x^{k})+\langle\nabla f_{\sigma_{k}}(x^{k}),x^{k+1}-x^{k}\rangle+\frac{1}{2}\langle\nabla^{2} f_{\sigma_{k}}(x^{k})(x^{k+1}-x^{k}),x^{k+1}-x^{k}\rangle\notag\\
   &~~~~~~~~~~~~~~~~+\frac{L\sqrt{d}}{6\sigma_{k}}\|x^{k+1}-x^{k}\|^{3}\notag\\
   &\overset{\rm(a)}= f_{\sigma_{k}}(x^{k})+\langle\nabla f_{\sigma_{k}}(x^{k})-v^{k},x^{k+1}-x^{k}\rangle+\frac{1}{2}\langle(\nabla^{2} f_{\sigma_{k}}(x^{k})-H^{k})(x^{k+1}-x^{k}),x^{k+1}-x^{k}\rangle\notag\\
   &~~~~+\frac{L\sqrt{d}}{6\sigma_{k}}[1-(1+\|x^{k}\|)^{3}]\|x^{k+1}-x^{k}\|^{3}+h_{k}(x^{k+1})\notag\\
   &\le f_{\sigma_{k}}(x^{k})+\langle\nabla f_{\sigma_{k}}(x^{k})-v^{k},x^{k+1}-x^{k}\rangle+\frac{1}{2}\langle(\nabla^{2}f_{\sigma_{k}}(x^{k})-H^{k})(x^{k+1}-x^{k}),x^{k+1}-x^{k}\rangle\notag\\
   &~~~~+h_{k}(x^{k+1})\notag\\
   &\le f_{\sigma_{k}}(x^{k})+\|\nabla f_{\sigma_{k}}(x^{k})-v^{k}\|\|x^{k+1}-x^{k}\|+
   \frac{1}{2}\|\nabla^{2}f_{\sigma_{k}}(x^{k})-H^{k}\|\|x^{k+1}-x^{k}\|^{2}\notag\\
   &~~~~+h_{k}(x^{k+1}),\label{eq0513}
  \end{align}
  where {\rm (a)} follows from the definition of $h_{k}$ (see \eqref{defsq}).
  On the other hand, one has
  \begin{align*}
  h_{k}(x^{k+1})&=\langle v^{k},x^{k+1}\!-\!x^{k}\rangle\!+\!\frac{1}{2}\langle H^{k}(x^{k+1}\!-\!x^{k}),x^{k+1}\!-\!x^{k}\rangle\!+\!\frac{L\sqrt{d}}{6\sigma_{k}}(1+\|x^{k}\|)^{3}\|x^{k+1}\!-\!x^{k}\|^{3}\\
  &\overset{\rm(a)}=-\frac{1}{2}\langle H^{k}(x^{k+1}-x^{k}),x^{k+1}-x^{k}\rangle-\frac{L\sqrt{d}}{3\sigma_{k}}(1+\|x^{k}\|)^{3}\|x^{k+1}-x^{k}\|^{3}\\
  &\overset{\rm(b)}\le-\frac{L\sqrt{d}}{12\sigma_{k}}(1+\|x^{k}\|)^{3}\|x^{k+1}-x^{k}\|^{3} ,
  \end{align*}
  where we used \eqref{eq022301} in {\rm(a)} for the equality and {\rm(b)} follows from \eqref{eq022302}.  Combining this with \eqref{eq0513} and \cite[Lemma~2.8(a)]{Starnes2023},\footnote{Note that the $f_{\tau}$ in \cite{Starnes2023} was defined as $f_{\tau}(x) := \frac1{\pi^d}\int_{\R^d}f(x+\sigma u)e^{-\|u\|^2}du$, which corresponds to our $f_{\tau/\sqrt{2}}(x)$. Thus, we have from \cite[Lemma~2.8(a)]{Starnes2023} that $|f_{\sigma_1}(x) - f_{\sigma_2}(x)|\le (\sigma_1^2 - \sigma_2^2)Ld/2$ for all $x\in \R^d$ and $\sigma_1 > \sigma_2 > 0$; see the ArXiv version at https://arxiv.org/pdf/2311.00521 for a proof.} we get 
  \begin{align*}
  &\frac{L\sqrt{d}}{12\sigma_k}(1+\|x^{k}\|)^{3}\|x^{k+1}-x^{k}\|^{3}-\frac{1}{2}(\sigma_{k}^{2}-\sigma_{k+1}^{2})Ld\\
  &\le f_{\sigma_{k}}(x^{k})\!-\!f_{\sigma_{k+1}}(x^{k+1})\!+\!\|\nabla f_{\sigma_{k}}(x^{k})-v^{k}\|\|x^{k+1}\!-\!x^{k}\|\!+\!\frac{1}{2}\|\nabla^{2}f_{\sigma_{k}}(x^{k})-H^{k}\|\|x^{k+1}\!-\!x^{k}\|^{2}\\
  &\le f_{\sigma_{k}}(x^{k})-f_{\sigma_{k+1}}(x^{k+1})+\frac{2}{3}\frac{\|\nabla f_{\sigma_{k}}(x^{k})-v^{k}\|^{\frac{3}{2}}}{\big(L\sqrt{d}/(8\sigma_k)\big)^{\frac{1}{2}}(1+\|x^{k}\|)^{\frac{3}{2}}}+\frac{1}{3}\frac{L\sqrt{d}}{8\sigma_k}(1+\|x^{k}\|)^{3}\|x^{k+1}-x^{k}\|^{3}\\
  &~~+\frac{1}{3}\cdot \left(\frac{1}{2} \right)^3\frac{\|\nabla^{2}f_{\sigma_{k}}(x^{k})-H^{k}\|^{3}}{\left(L\sqrt{d}/(48\sigma_k) \right)^2(1+\|x^{k}\|)^{6}}+\frac{2}{3}\cdot\frac{L\sqrt{d}}{48\sigma_k}(1+\|x^{k}\|)^{3}\|x^{k+1}-x^{k}\|^{3} \\
  &\le f_{\sigma_{k}}(x^{k})\!\!-\!\!f_{\sigma_{k+1}}(x^{k+1})+\frac{2}{3}\frac{\|\nabla f_{\sigma_{k}}(x^{k})-v^{k}\|^{\frac{3}{2}}}{\big(L\sqrt{d}/(8\sigma_k)\big)^{\frac{1}{2}}(1+\|x^{k}\|)^{\frac{3}{2}}}+\frac{1}{3}\frac{L\sqrt{d}}{8\sigma_k}(1+\|x^{k}\|)^{3}\|x^{k+1}-x^{k}\|^{3}\\
  &~~+\frac{96\sigma_k^2\|\nabla^{2}f_{\sigma_{k}}(x^{k})-H^{k}\|^{3}}{L^2d}+\frac{L\sqrt{d}}{72\sigma_k}(1+\|x^{k}\|)^{3}\|x^{k+1}-x^{k}\|^{3} \\
  &\le f_{\sigma_{k}}(x^{k})\!\!-\!\!f_{\sigma_{k+1}}(x^{k+1})+\frac{2\sqrt{\sigma_k}\|\nabla f_{\sigma_{k}}(x^{k})-v^{k}\|^{\frac{3}{2}}}{L^{\frac{1}{2}}d^{\frac{1}{4}}(1+\|x^{k}\|)^{\frac{3}{2}}}+\frac{L\sqrt{d}}{24\sigma_k}(1+\|x^{k}\|)^{3}\|x^{k+1}-x^{k}\|^{3}\\
  &~~+\frac{96\sigma_k^2\|\nabla^{2}f_{\sigma_{k}}(x^{k})-H^{k}\|^{3}}{L^2d}+\frac{L\sqrt{d}}{72\sigma_k}(1+\|x^{k}\|)^{3}\|x^{k+1}-x^{k}\|^{3},
  \end{align*}
  where we used Young's inequality in the second inequality. By rearranging terms in the above inequality and taking expectations on both sides, we can obtain
  \begin{align}\label{eq2025080501}
   &\frac{L\sqrt{d}}{36\sigma_k}(1+\|x^{k}\|)^{3}\mathbb{E}_{k}\left[\|x^{k+1}-x^{k}\|^{3}\right]-\frac{1}{2}(\sigma_{k}^{2}-\sigma_{k+1}^{2})Ld\notag\\
   &\le f_{\sigma_{k}}(x^{k})-\mathbb{E}_{k}[f_{\sigma_{k+1}}(x^{k+1})]+\frac{2\sqrt{\sigma_k}}{L^{\frac{1}{2}}d^{\frac{1}{4}}(1+\|x^{k}\|)^{\frac{3}{2}}}\E_k\left[\|\nabla f_{\sigma_{k}}(x^{k})-v^{k}\|^{\frac{3}{2}} \right]\notag\\
   &~~~~~~~+\frac{96\sigma_k^2}{L^2d}\mathbb{E}_{k}\left[\|\nabla^{2}f_{\sigma_{k}}(x^{k})-H^{k}\|^{3}\right].
  \end{align}
  By Lemma \ref{lem2025080501}, one has
  \begin{align*}
  \mathbb{E}_{k}\left[\|\nabla f_{\sigma_{k}}(x^{k})-v^{k}\|^{\frac{3}{2}}\right]\le\left[\mathbb{E}_{k}\left(\|\nabla f_{\sigma_{k}}(x^{k})-v^{k}\|^{2}\right)\right]^{\frac{3}{4}} \le \frac{D_{1}^{\frac{3}{4}}}{(k+1)^{\beta}}\cdot (1+\|x^{k}\|)^{\frac{3}{2}}
  \end{align*}
  and
  \begin{align*}
  \mathbb{E}_{k}\left[\|\nabla^{2} f_{\sigma_{k}}(x^{k})-H^{k}\|_{F}^{3}\right]\leq \frac{D_3}{(k+1)^{\alpha}}.
  \end{align*}
  These two displays together with \eqref{eq2025080501} and the fact $\|M\|\le \|M\|_F$ for square matrices $M$ imply that
  \begin{align}
      & \frac{L\sqrt{d}}{36\sigma_k}\E_{k} \left[\|x^{k+1} -x^k \|^3\right] \le \frac{L\sqrt{d}}{36\sigma_k}(1+\|x^{k}\|)^{3}\E_{k} \left[\|x^{k+1} -x^k \|^3\right] \nonumber \\
      &\le f_{\sigma_{k}}(x^{k})-\mathbb{E}_{k}[f_{\sigma_{k+1}}(x^{k+1})]+\frac{2\sqrt{\sigma_k}D_{1}^{\frac{3}{4}}}{L^{\frac{1}{2}}d^{\frac{1}{4}}(k+1)^{\beta}}+\frac{96\sigma_k^2D_3}{L^2d(k+1)^{\alpha}} + \frac{1}{2}(\sigma_k^2 -\sigma_{k+1}^2)Ld. \label{sum_diff_x}
  \end{align}
  Taking expectation on both sides of \eqref{sum_diff_x} and summing from $k=0$ to $T\ge 1$, we obtain
  \begin{align}
      &-\frac{\gamma^2 Ld}{2}+\sum_{k=\lfloor \frac{T}{2} \rfloor}^T \frac{L\sqrt{d}}{36\sigma_k}\E \left[\|x^{k+1} -x^k\|^3 \right]
      \le-\frac{\gamma^2 Ld}{2}+\sum_{k=0}^T \frac{L\sqrt{d}}{36\sigma_k}\E \left[\|x^{k+1} -x^k\|^3 \right] \nonumber \\
      &\le f_{\gamma}(x^0) - f_*+ \sum_{k=0}^T\frac{2\sqrt{\sigma_k}D_{1}^{\frac{3}{4}}}{L^{\frac{1}{2}}d^{\frac{1}{4}}(k+1)^{\beta}}+\sum_{k=0}^T\frac{96\sigma_k^2D_3}{L^2d(k+1)^{\alpha}} \nonumber \\
      &\le f_{\gamma}(x^0) - f_*+ \sum_{k=0}^{\infty}\frac{2D_{1}^{\frac{3}{4}}\sqrt{\sigma_k}}{L^{\frac{1}{2}}d^{\frac{1}{4}}(k+1)^{\beta}}+\sum_{k=0}^{\infty}\frac{96\sigma_k^2D_3}{L^2d(k+1)^{\alpha}}\nonumber \\
      &\le f_{\gamma}(x^{0})-f_*+\frac{2D_{1}^{\frac{3}{4}}}{L^{\frac{1}{2}}d^{\frac{1}{4}}}
    \left(\sum_{k=0}^{+\infty}\frac{\sqrt{2^\rho\gamma}}{(k+1)^{\beta+\frac{\rho}{2}}}\right)
    +\frac{96D_3}{L^2d}\left(\sum_{k=0}^{+\infty}\frac{4\gamma^2}{(k+1)^{\alpha+2\rho}}\right) \nonumber \\
    &\le  f_{\gamma}(x^{0})-f_*+\frac{2^{\frac{\rho}{2}+1}\gamma^{\frac{1}{2}}D_{1}^{\frac{3}{4}}}{L^{\frac{1}{2}}d^{\frac{1}{4}}}
    \left(1+ \int_1^{\infty} x^{-\beta -\frac{\rho}{2}} dx\right)
    +\frac{384\gamma^2D_3}{L^2d}\left(1+\int_1^{\infty} x^{-\alpha -2\rho} dx\right) \nonumber \\
    & = f_{\gamma}(x^{0})-f_*+\frac{2^{\frac{\rho}{2}+1}\gamma^{\frac{1}{2}}D_{1}^{\frac{3}{4}}}{L^{\frac{1}{2}}d^{\frac{1}{4}}} \frac{\beta+ \frac{\rho}{2}}{\beta + \frac{\rho}{2}-1}
    +\frac{384\gamma^2D_3}{L^2d}\frac{\alpha+2\rho}{\alpha+2\rho-1}, \label{keysteps}
  \end{align}
  where we used the observation that $\sigma_k = \gamma k^{-\rho} \le 2^\rho\gamma (k+1)^{-\rho} \le 2\gamma (k+1)^{-\rho}$ for all $k \ge 1$ (recall that $\rho\in (0,1/2)$) in the fourth inequality.
  Thus, we have \eqref{bd_diff_xk}. 
  
  Moreover, from \eqref{sum_diff_x}, we obtain that
  \begin{align}
      \E_k[f_{\sigma_{k+1}}(x^{k+1})] - f_{\sigma_k}(x^k) \le \frac{2\sqrt{\sigma_k}D_{1}^{\frac{3}{4}}}{L^{\frac{1}{2}}d^{\frac{1}{4}}(k+1)^{\beta}}+\frac{96\sigma_k^2D_3}{L^2d(k+1)^{\alpha}} + \frac{1}{2}(\sigma_k^2 -\sigma_{k+1}^2)Ld. \nonumber
  \end{align}
  Taking the expectation on both sides of the above inequality and then summing from $k=0$ to $k=T$, we see via a computation analogous to the fourth and fifth inequalities in \eqref{keysteps} that \eqref{bd_f} holds.
\end{proof}

\begin{theorem}\label{thm110401}
Consider problem~\eqref{problem} and suppose that Assumption~\ref{Ass_para} holds. Then the sequence $\{x^k\}$ generated by Algorithm~\ref{algorithm 01} satisfies that, for all $T\ge 2$,
 \begin{align*}
  &\mathbb{E}\bigg[\frac{\|\nabla f_{\sigma_{k_{*}}}(x^{k_{*}})\|}{(1+\|x^{k_*}\|)^3}\bigg]  \\
  &\le\frac{3\gamma^{\frac{1}{3}}L^{\frac{2}{3}}(d+9)^2}{d^{\frac{1}{6}}}\cdot\frac{\left(f_{\gamma}(x^0) - f_*+ D_6\right)^{\frac{1}{3}}}{T^{\frac{1+\rho}{3}}}  + 9L^{\frac{1}{3}}\gamma^{-\frac{1}{3}}d^{\frac{1}{6}}\frac{\left(f_{\gamma}(x^0) - f_*+ D_6\right)^{\frac{2}{3}}}{T^{\frac{1+\rho}{3}}},
  \end{align*}
 where $k_{*}:=\max\{l:l\in\arg\min_{\lfloor\frac{T}{2}\rfloor\le  k \le T}\sigma_{k}^{-1}\mathbb{E}\left[\|x^{k+1}-x^{k}\|^{3}\right]\}$ and $D_6$ is defined in \eqref{defD6}.
\end{theorem}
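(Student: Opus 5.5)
The plan is to bound $\|\nabla f_{\sigma_k}(x^{k})\|$ at $k=k_*$ by combining the first-order optimality condition \eqref{eq022301} of the subproblem with the Taylor-type estimate from the proof of Lemma~\ref{lem:cubic_descent}, and then to use \eqref{bd_diff_xk} to control the step length. We may assume $x^{k+1}$ is available, so everything is evaluated at $k=k_*$ with $x^{k_*+1}$. Write $s_k:=x^{k+1}-x^k$ and $c_k:=(1+\|x^k\|)^3\ge1$. By \eqref{eq022301},
\[
\nabla f_{\sigma_k}(x^k)=(\nabla f_{\sigma_k}(x^k)-v^k)-H^k s_k-\frac{L\sqrt d}{2\sigma_k}c_k\|s_k\|s_k .
\]
We then write $-H^k s_k=(\nabla^2 f_{\sigma_k}(x^k)-H^k)s_k-\nabla^2 f_{\sigma_k}(x^k)s_k$. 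For the term $\nabla^2 f_{\sigma_k}(x^k)$, the second representation in \eqref{GSHessian_Lip}, together with the Lipschitz continuity of $\nabla f$, gives $\|\nabla^2 f_{\sigma_k}(x^k)\|\le L$. This holds because $\mathbb{E}[u(\nabla f(x+\sigma u)-\nabla f(x))^T]/\sigma$ is an average of Clarke Jacobians, or one can use \eqref{Hex}.

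After dividing by $c_k$ and using $c_k\ge1$, this yields
\[
\frac{\|\nabla f_{\sigma_k}(x^k)\|}{c_k}\le \frac{\|\nabla f_{\sigma_k}(x^k)-v^k\|}{c_k}+\|\nabla^2f_{\sigma_k}(x^k)-H^k\|\,\|s_k\|+L\|s_k\|+\frac{L\sqrt d}{2\sigma_k}\|s_k\|^2 .
\]
We then take total expectations and handle each term separately.

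\emph{Variance term.} By \eqref{var_vk} and Jensen's inequality, $\mathbb{E}[\|\nabla f_{\sigma_k}-v^k\|/c_k]\le \sqrt{D_1}(k+1)^{-2\beta/3}$.

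\emph{Hessian error term.} By H\"older's inequality it is bounded by $(\mathbb{E}\|\nabla^2f_{\sigma_k}-H^k\|_F^{3/2})^{2/3}(\mathbb{E}\|s_k\|^3)^{1/3}$, and \eqref{2ndexpectation} controls the first factor by $D_2^{1/2}(k+1)^{-\alpha/3}$, with $\sqrt{D_2}=L(d+9)^2$. This matches the constant $L^{2/3}(d+9)^2$ in the claim after combining with $(\mathbb{E}\|s_k\|^3)^{1/3}$.

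\emph{Step-length terms.} We have $\mathbb{E}\|s_k\|\le (\mathbb{E}\|s_k\|^3)^{1/3}$ and $\mathbb{E}\|s_k\|^2\le (\mathbb{E}\|s_k\|^3)^{2/3}$. By \eqref{bd_diff_xk}, $\mathbb{E}\|s_{k_*}\|^3\le \sigma_{k_*}\cdot\frac{72}{L\sqrt d T}(f_\gamma(x^0)-f_*+D_6)$.

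To convert powers of $\sigma_{k_*}$ and $k_*$ into powers of $T$, we use $k_*\in[\lfloor T/2\rfloor,T]$, so that $\sigma_{k_*}=\gamma k_*^{-\rho}\le \gamma 3^\rho T^{-\rho}$ for $T\ge2$. Then $\sigma_{k_*}^{-1}(\mathbb{E}\|s\|^3)^{2/3}\lesssim \sigma_{k_*}^{-1/3}T^{-2/3}$. Because of the negative power of $\sigma$, we need the lower bound $\sigma_{k_*}\ge \gamma T^{-\rho}$, which gives $\sigma_{k_*}^{-1/3}\le\gamma^{-1/3}T^{\rho/3}$. The resulting rate is $T^{(\rho-2)/3}$, and this is $\le T^{-(1+\rho)/3}$ because $\rho<1/2$. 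This is where the $\gamma^{-1/3}d^{1/6}L^{1/3}(\cdot)^{2/3}$ term comes from, with $72^{2/3}/2\le 9$.

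The remaining terms are handled as follows. The terms $L(\mathbb{E}\|s\|^3)^{1/3}$ and the Hessian error term give $\gamma^{1/3}T^{-(1+\rho)/3}$ times $(f_\gamma(x^0)-f_*+D_6)^{1/3}$, scaled by $L^{2/3}d^{-1/6}(d+9)^2$ up to absorbed constants, since $(k+1)^{-\alpha/3}\le1$. The variance term decays like $(k+1)^{-2\beta/3}$ with $\beta>1-\rho/2$, hence faster than $T^{-(1+\rho)/3}$. However, it carries a constant $\sqrt{D_1}$ that does not appear in the statement.

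The main obstacle is matching the exact constants. In particular, the gradient-error term must either be absorbed, perhaps by using $\|s_k\|$ from optimality differently, or by bounding $\|\nabla f_{\sigma_k}-v^k\|$ against the displayed quantities via $D_6\ge D_4\propto D_1^{3/4}$. The $\rho$-bookkeeping also needs care. I would first try to absorb $\sqrt{D_1}(k+1)^{-2\beta/3}$ using $D_1^{3/4}\le D_6$ and elementary power inequalities. If that fails, I would accept an extra additive term in the bound.
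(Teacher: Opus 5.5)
\textbf{There is a genuine gap.} Your pathwise inequality is valid, but it does not give the stated bound. The obstacle you flag at the end is exactly where the missing idea lies.

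\emph{What goes wrong.} You take norms in \eqref{eq022301} pathwise, so you must carry $\|\nabla f_{\sigma_k}(x^k)-v^k\|$. This produces an additional term $\sqrt{D_1}(k_*+1)^{-2\beta/3}$ that is not of the displayed form. Separately, splitting $H^k=\nabla^2 f_{\sigma_k}(x^k)+(H^k-\nabla^2 f_{\sigma_k}(x^k))$ gives the coefficient $L(d+9)^2(k_*+1)^{-\alpha/3}+L$ in front of $(\mathbb{E}\|s_{k_*}\|^3)^{1/3}$. After \eqref{bd_diff_xk}, this yields a constant as large as about $72^{1/3}3^{\rho/3}\approx 5$ where the statement has $3$. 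The factor $(k_*+1)^{-\alpha/3}$ does not rescue this for small $T$ when $\alpha$ is close to $1-2\rho$. So your remark that the Hessian-error term ``matches'' the constant is not right.

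Absorbing both excesses into the $9L^{1/3}\gamma^{-1/3}d^{1/6}\Delta^{2/3}$ term, using $\Delta\ge D_4$ and $\Delta\ge D_5$, would require a delicate parameter-by-parameter numerical comparison with thin margins. You do not carry this out, and your fallback (an extra additive term) proves a weaker theorem.

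\emph{The missing idea.} Take the conditional expectation $\mathbb{E}_{k_*}$ of \eqref{eq022301} before taking norms. Since $k_*$ is deterministic, $x^{k_*}$ and $\sigma_{k_*}$ are fixed under $\mathbb{E}_{k_*}$. Moreover $\mathbb{E}_{k_*}[v^{k_*}]=\nabla f_{\sigma_{k_*}}(x^{k_*})$ by \eqref{GSgradient}, because the $f(x^k)u_i^k$ part has mean zero. Hence
\[
\nabla f_{\sigma_{k_*}}(x^{k_*})=-\mathbb{E}_{k_*}[H^{k_*}s_{k_*}]-\tfrac{L\sqrt d}{2\sigma_{k_*}}c_{k_*}\,\mathbb{E}_{k_*}[\|s_{k_*}\|s_{k_*}],
\]
and the gradient-estimation error disappears entirely.

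Do not split $H^{k_*}$. Instead use Cauchy--Schwarz,
\[
\mathbb{E}_{k_*}\|H^{k_*}s_{k_*}\|\le(\mathbb{E}_{k_*}\|H^{k_*}\|^2)^{1/2}(\mathbb{E}_{k_*}\|s_{k_*}\|^2)^{1/2},
\]
together with
\[
\mathbb{E}_{k_*}\|H^{k_*}\|^2\le J_{k_*}^{-1}\sum_i\mathbb{E}_{k_*}\|\widehat H_i^{k_*}\|_F^2\le\tfrac14L^2(d+9)^4,
\]
which follows from Lemma~\ref{lem2025102601} with $p=4$ and \eqref{eq2025082504}. This gives the coefficient $\frac L2(d+9)^2$.

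Then divide by $c_{k_*}\ge1$, take total expectations, and apply Jensen to get $(\mathbb{E}\|s\|^2)^{1/2}\le(\mathbb{E}\|s\|^3)^{1/3}$ and $\mathbb{E}\|s\|^2\le(\mathbb{E}\|s\|^3)^{2/3}$. Finally use \eqref{bd_diff_xk} with $\gamma T^{-\rho}=\sigma_T\le\sigma_{k_*}\le3^\rho\gamma T^{-\rho}$. The constants become $\frac12\,72^{1/3}3^{\rho/3}\le3$ and $\frac12\,72^{2/3}\le9$, and $T^{-(2-\rho)/3}\le T^{-(1+\rho)/3}$ because $\rho<\frac12$. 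Your bookkeeping for the powers of $\sigma_{k_*}$ is otherwise the same as the paper's.
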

\begin{proof}
From \eqref{eq022301}, one has
\begin{align*}
\nabla f_{\sigma_{k_*}}(x^{k_*})\!+\!\mathbb{E}_{k_*}[H^{k_*}(x^{k_*+1}\!-\!x^{k_*})]\!+\!\frac{L\sqrt{d}}{2\sigma_{k_*}}(1\!+\!\|x^{k_*}\|)^{3}\mathbb{E}_{k_*}[\|x^{k_*+1}\!-\!x^{k_*}\|(x^{k_*+1}\!-\!x^{k_*})]\!=\!0.
\end{align*}
This implies that
\begin{align}
&\|\nabla f_{\sigma_{k_*}}(x^{k_*})\|\le\mathbb{E}_{k_*}[\|H^{k_*}(x^{k_*+1}-x^{k_*})\|]+\frac{L\sqrt{d}}{2\sigma_{k_*}}(1+\|x^{k_*}\|)^{3} \mathbb{E}_{k_*}[\|x^{k_*+1}-x^{k_*}\|^{2}]\notag\\
&\le[\mathbb{E}_{k_*}(\|H^{k_*}\|^{2})]^{\frac{1}{2}}[\mathbb{E}_{k_*}(\|x^{k_*+1}\!-\!x^{k_*}\|^{2})]^{\frac{1}{2}}\!+\!\frac{L\sqrt{d}}{2\sigma_{k_*}}(1+\|x^{k_*}\|)^{3} 
\mathbb{E}_{k_*}[\|x^{k_*+1}\!-\!x^{k_*}\|^{2}].\label{eq052701}
\end{align}
On the other hand, we can observe that
\begin{align}
 \mathbb{E}_{k_*}[\|H^{k_*}\|^{2}]&=\mathbb{E}_{k_*}\bigg[\bigg\|J_{k_*}^{-1}\sum_{i=1}^{J_{k_*}}\widehat{H}_{i}^{k_*}\bigg\|^{2}\bigg]\le J_{k_*}^{-1}\sum_{i=1}^{J_{k_*}}\mathbb{E}_{k_*}[\|\widehat{H}_{i}^{k_*}\|_{F}^{2}]\le 2^{-2}L^{2}(d+9)^{4},\label{eq052702}
\end{align}
where $J_{k_*}=\lceil(k_*+1)^{\frac{2}{3}\alpha}\rceil$ and $\widehat{H}_{i}^{k}:=2^{-1}\sigma_{k}^{-2}[f(x^{k}+\sigma_{k} u_{i}^{k})+f(x^{k}-\sigma_{k} u_{i}^{k})-2f(x^{k})](u_{i}^{k}(u_{i}^{k})^{T}-I)$, and the last inequality holds in view of the Cauchy-Schwarz inequality, \eqref{eq2025082504} and Lemma~\ref{lem2025102601} with $p = 4$ (see also \eqref{eq2025082502}). Combining \eqref{eq052701} and \eqref{eq052702}, one has
\begin{align*}
  &\|\nabla f_{\sigma_{k_*}}(x^{k_*})\|\le \frac{L}2(d+9)^{2}[\mathbb{E}_{k_*}(\|x^{k_*+1}-x^{k_*}\|^{2})]^{\frac{1}{2}}\!+\!\frac{L\sqrt{d}}{2\sigma_{k_*}}(1+\|x^{k_*}\|)^{3}\mathbb{E}_{k_*}[\|x^{k_*+1}-x^{k_*}\|^{2}]\\
  &\le (1+\|x^{k_*}\|)^{3}\bigg(\frac{L}2(d+9)^{2}[\mathbb{E}_{k_*}(\|x^{k_*+1}-x^{k_*}\|^{2})]^{\frac{1}{2}}\!+\!\frac{L\sqrt{d}}{2\sigma_{k_*}}\mathbb{E}_{k_*}[\|x^{k_*+1}-x^{k_*}\|^{2}]\bigg).
\end{align*}
Dividing both sides of the above inequality by $(1+\|x^{k_*}\|)^{3}$ and taking expectation, we obtain 
\begin{align}
  &\mathbb{E}\bigg[\frac{\|\nabla f_{\sigma_{k_*}}(x^{k_*})\|}{(1+\|x^{k_*}\|)^3}\bigg]\le 2^{-1} L(d+9)^{2}[\mathbb{E}(\|x^{k_*+1}-x^{k_*}\|^{2})]^{1/2}+\frac{L\sqrt{d}}{2\sigma_{k_*}}\mathbb{E}[\|x^{k_*+1}-x^{k_*}\|^{2}] \nonumber \\
  &\le 2^{-1}L(d+9)^2\left(\frac{72\sigma_{k_*}\Delta}{L\sqrt{d}T} \right)^{\frac{1}{3}}  + \frac{L\sqrt{d}}{2\sigma_{k_*}}\left(\frac{72\sigma_{k_*}\Delta}{L\sqrt{d}T} \right)^{\frac{2}{3}} \nonumber \\
  &= 2^{-1} L(d+9)^2\left(\frac{72}{L\sqrt{d}} \right)^{\frac{1}{3}}\Delta^{^\frac{1}{3}}\left(\frac{\sigma_{k_*}}{T}\right)^{\frac{1}{3}}  + \frac{L\sqrt{d}}{2\sigma_{k_*}^{\frac{1}{3}}}\left(\frac{72}{L\sqrt{d}} \right)^{\frac{2}{3}}\Delta^{^\frac{2}{3}} \frac{1}{T^{\frac{2}{3}}} \nonumber \\
  &\le 2^{-1} L(d+9)^2\left(\frac{72}{L\sqrt{d}} \right)^{\frac{1}{3}}\Delta^{^\frac{1}{3}}\left(\frac{\sigma_{\lfloor \frac{T}{2}\rfloor}}{T}\right)^{\frac{1}{3}}  + \frac{L\sqrt{d}}{2\sigma_T^{\frac{1}{3}}}\left(\frac{72}{L\sqrt{d}} \right)^{\frac{2}{3}}\Delta^{^\frac{2}{3}} \frac{1}{T^{\frac{2}{3}}} \nonumber \\
  &\le 2^{-1} L(d+9)^2\left(\frac{72}{L\sqrt{d}} \right)^{\frac{1}{3}}\Delta^{^\frac{1}{3}}\gamma^{\frac{1}{3}}3^{\frac{\rho}{3}}T^{-\frac{1+\rho}{3}}  + \frac{L\sqrt{d}}{2\gamma^{\frac{1}{3}}}\left(\frac{72}{L\sqrt{d}} \right)^{\frac{2}{3}}\Delta^{^\frac{2}{3}} T^{-\frac{2-\rho}{3}} \nonumber \\
  &\le \frac{3\gamma^{\frac{1}{3}}L^{\frac{2}{3}}(d+9)^2}{d^{\frac{1}{6}}}\Delta^{^\frac{1}{3}}T^{-\frac{1+\rho}{3}}  + 9L^{\frac{1}{3}}\gamma^{-\frac{1}{3}}d^{\frac{1}{6}}\Delta^{^\frac{2}{3}} T^{-\frac{2-\rho}{3}} \nonumber \\
  &\le \frac{3\gamma^{\frac{1}{3}}L^{\frac{2}{3}}(d+9)^2}{d^{\frac{1}{6}}}\Delta^{^\frac{1}{3}}T^{-\frac{1+\rho}{3}}  + 9L^{\frac{1}{3}}\gamma^{-\frac{1}{3}}d^{\frac{1}{6}}\Delta^{^\frac{2}{3}} T^{-\frac{1+\rho}{3}}, \nonumber 
\end{align}
where we applied Jensen's inequality to obtain the first summand on the right hand side of the first inequality, the second inequality follows from \eqref{bd_diff_xk} and Jensen's inequality upon letting $\Delta:= f_{\gamma}(x^0) - f_*+ D_6$, the fourth inequality follows from the fact $\lfloor\frac{T}{2}\rfloor\ge\frac{T}{3}$ for any $T\ge2$, while the last two inequalities hold because $\rho \in (0,\frac{1}{2})$.

\end{proof}
\begin{theorem}\label{lem071501}
 Consider problem~\eqref{problem} and suppose that Assumption~\ref{Ass_para} holds. Then the sequence $\{x^k\}$ generated by Algorithm~\ref{algorithm 01} satisfies that, for all $T \ge 1$, 
  \begin{align*}
 &\mathbb{E}\left[\frac{\max\{0,-\lambda_{\min}(\nabla^{2}f_{\sigma_{k_{*}}}(x^{k_{*}}))\}}{(1+\|x^{k_{*}}\|)^{3}}\right]  \le  3L^{\frac{2}{3}}d^{\frac{1}{3}}\gamma^{-\frac{2}{3}}\left( f_{\gamma}(x^0) - f_*+ D_6\right)^{\frac{1}{3}}T^{-\frac{1-2\rho}{3}},
 \end{align*}
where $k_{*}:=\max\{l:l\in\arg\min_{\lfloor\frac{T}{2}\rfloor\le  k \le T}\sigma_{k}^{-1}\mathbb{E}\left[\|x^{k+1}-x^{k}\|^{3}\right]\}$ and $D_6$ is defined in \eqref{defD6}.
\end{theorem}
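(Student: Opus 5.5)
The argument runs parallel to that of Theorem~\ref{thm110401}, with the second-order optimality condition \eqref{eq022302} in place of \eqref{eq022301}. First I fix $k=k_*$ and use Weyl's inequality together with \eqref{eq022302}:
\[
\lambda_{\min}(\nabla^2 f_{\sigma_k}(x^k)) \ge \lambda_{\min}(H^k) - \|\nabla^2 f_{\sigma_k}(x^k)-H^k\| \ge -\frac{L\sqrt d}{2\sigma_k}(1+\|x^k\|)^3\|x^{k+1}-x^k\| - \|\nabla^2 f_{\sigma_k}(x^k)-H^k\|_F .
\]
Since the right-hand side is nonpositive, the same bound holds for $-\max\{0,-\lambda_{\min}(\nabla^2 f_{\sigma_k}(x^k))\}$. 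I then divide by $(1+\|x^k\|)^3\ge 1$, which gives
\[
\frac{\max\{0,-\lambda_{\min}(\nabla^2 f_{\sigma_k}(x^k))\}}{(1+\|x^k\|)^3} \le \frac{L\sqrt d}{2\sigma_k}\|x^{k+1}-x^k\| + \|\nabla^2 f_{\sigma_k}(x^k)-H^k\|_F .
\]

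Next I take total expectation and bound the two terms separately. For the first term, Jensen's inequality and \eqref{bd_diff_xk} give
\[
\mathbb{E}\|x^{k_*+1}-x^{k_*}\| \le \bigl(\mathbb{E}\|x^{k_*+1}-x^{k_*}\|^3\bigr)^{1/3} \le \Bigl(\frac{72\sigma_{k_*}\Delta}{L\sqrt d\,T}\Bigr)^{1/3},
\]
where $\Delta:=f_\gamma(x^0)-f_*+D_6$. The first term is therefore at most $\frac{L\sqrt d}{2}\,72^{1/3}(L\sqrt d)^{-1/3}\Delta^{1/3}\sigma_{k_*}^{-2/3}T^{-1/3}$. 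Because $k_*\le T$, we have $\sigma_{k_*}\ge\sigma_T=\gamma T^{-\rho}$, so $\sigma_{k_*}^{-2/3}\le\gamma^{-2/3}T^{2\rho/3}$. Together with $72^{1/3}/2<3$, this yields $3L^{2/3}d^{1/3}\gamma^{-2/3}\Delta^{1/3}T^{-(1-2\rho)/3}$, which is exactly the claimed bound. The case $T=1$, where $\lfloor T/2\rfloor=0$ and $\sigma_0=\gamma$, is covered by the same estimate.

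The main obstacle is the Hessian-error term $\mathbb{E}\|\nabla^2 f_{\sigma_{k_*}}(x^{k_*})-H^{k_*}\|_F$, since the stated bound contains no separate summand for it. By \eqref{2ndexpectation} and Jensen's inequality it is at most $\sqrt{D_2}(k_*+1)^{-\alpha/3}\lesssim T^{-\alpha/3}$, using $k_*\ge\lfloor T/2\rfloor$. Assumption~\ref{Ass_para} gives $\alpha>1-2\rho$, so this term decays strictly faster than $T^{-(1-2\rho)/3}$. Absorbing it requires either a slightly larger constant in the statement or the sharper route below.

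The sharper route is to avoid the triangle inequality and instead use Young's inequality on the descent estimate in Lemma~\ref{lem042301}, where the Hessian error already enters through $D_3\subset D_6$. I would try that route first: reexamine \eqref{sum_diff_x}, so that the $\lambda_{\min}$ deficit is controlled entirely through the $\|x^{k+1}-x^k\|$ term and $D_6$. If that fails, I fall back on the crude absorption, noting that $\Delta\ge D_5$, which is proportional to $D_3$, and $\sqrt{D_2}$ can be compared with $D_3^{1/3}$ up to constants.
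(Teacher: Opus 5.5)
Your inequalities are correct, but the proposal does not reach the stated bound. The Weyl step leaves the additive term $\mathbb{E}\|\nabla^2 f_{\sigma_{k_*}}(x^{k_*})-H^{k_*}\|_F$, and you never dispose of it; you even suggest it may need a larger constant. The ``sharper route'' through \eqref{sum_diff_x} cannot remove it. The descent inequality only controls $\mathbb{E}\|x^{k+1}-x^k\|^3$, which you already use, and says nothing about the gap between $\lambda_{\min}(H^k)$ and $\lambda_{\min}(\nabla^2 f_{\sigma_k}(x^k))$.

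The missing idea is to average \emph{before} minimizing over directions. For a fixed unit $v$, \eqref{eq022302} gives $\langle v,H^kv\rangle+\frac{L\sqrt{d}}{2\sigma_k}(1+\|x^k\|)^3\|x^{k+1}-x^k\|\ge 0$. Apply $\mathbb{E}_k$: the cubic coefficient is fixed under it, and $\mathbb{E}_k[H^k]=\nabla^2 f_{\sigma_k}(x^k)$ by the first formula in \eqref{GSHessian_Lip}. Indeed, the $f(x^k-\sigma_k u)$ part has the same mean as the $f(x^k+\sigma_k u)$ part by symmetry of $u$, and $\mathbb{E}[uu^T-I]=0$ kills the $-2f(x^k)$ part. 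Taking the infimum over $v$ gives $\lambda_{\min}(\nabla^2 f_{\sigma_k}(x^k))\ge-\frac{L\sqrt{d}}{2\sigma_k}(1+\|x^k\|)^3\,\mathbb{E}_k\|x^{k+1}-x^k\|$ with no error term. Equivalently, concavity of $\lambda_{\min}$ gives $\lambda_{\min}(\mathbb{E}_kH^k)\ge\mathbb{E}_k\lambda_{\min}(H^k)$, which is exactly what Weyl's inequality discards. This is the paper's argument. From there your Jensen step, \eqref{bd_diff_xk}, $\sigma_{k_*}\ge\sigma_T$ and $72^{1/3}/2<3$ finish the proof verbatim; since $k_*$ is deterministic, conditioning at $k_*$ is legitimate.

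Your fallback does in fact close with the stated constant, though you did not check it. By \eqref{2ndexpectation}, $k_*+1\ge (T+1)/2\ge 1$ and $\alpha>1-2\rho$, one gets $\mathbb{E}\|\nabla^2 f_{\sigma_{k_*}}(x^{k_*})-H^{k_*}\|_F\le L(d+9)^2(k_*+1)^{-\alpha/3}\le 2^{1/3}L(d+9)^2T^{-(1-2\rho)/3}$. On the other side, $\Delta\ge D_5\ge 2304\gamma^2L(d+9)^2(d+16)^4$. Hence the unused slack $(3-72^{1/3}/2)L^{2/3}d^{1/3}\gamma^{-2/3}\Delta^{1/3}T^{-(1-2\rho)/3}$ exceeds $11L(d+9)^2T^{-(1-2\rho)/3}$, which covers the extra term. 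So the Weyl route is salvageable, but only because $D_6$ happens to contain the large constant $D_5$. The conditional-expectation argument needs no such coincidence and is shorter.
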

\begin{proof}
For any $v$ with $\|v\| = 1$, one can deduce from \eqref{eq022302} that for any $k\ge 0$,
\begin{align*}
 \langle v,H^{k}v\rangle+\frac{L\sqrt{d}}{2\sigma_{k}}(1+\|x^{k}\|)^{3}\|x^{k+1}-x^{k}\|\geq0,
\end{align*}
By taking the expectation on both sides of the above display, we can conclude that
\begin{align*}
 \langle v,\nabla^{2}f_{\sigma_{k}}(x^{k}) v\rangle+\frac{L\sqrt{d}}{2\sigma_{k}}(1+\|x^{k}\|)^{3}\mathbb{E}_{k}[\|x^{k+1}-x^{k}\|]\geq0.
\end{align*}
Taking infimum over all $v$ with $\|v\| = 1$ and dividing $(1+\|x^{k}\|)^{3}$ from both sides of the above inequality, we obtain
\begin{align*}
 -\frac{L\sqrt{d}}{2\sigma_{k}}\mathbb{E}_{k}[\|x^{k+1}-x^{k}\|]\le\frac{\min\{0,\lambda_{\mathrm{min}}
 (\nabla^{2}f_{\sigma_{k}}(x^{k}))\}}{(1+\|x^{k}\|)^{3}}.
\end{align*}
By taking the expectation on both sides of the above display, we can obtain that
\begin{align*}
   -\frac{L\sqrt{d}}{2\sigma_{k}}\mathbb{E}[\|x^{k+1}-x^{k}\|]\le
   \mathbb{E}\left[\frac{\min\{0,\lambda_{\mathrm{min}}(\nabla^{2}f_{\sigma_{k}}(x^{k}))\}}{(1+\|x^{k}\|)^{3}}\right].
  \end{align*}
Therefore, we can deduce from the above display that
 \begin{align*}
 &\mathbb{E}\left[\frac{\max\{0,-\lambda_{\min}(\nabla^{2}f_{\sigma_{k_{*}}}(x^{k_{*}}))\}}{(1+\|x^{k_{*}}\|)^{3}}\right]\\
 &\le \frac{L\sqrt{d}}{2\sigma_{k_*}}\mathbb{E}[\|x^{k_{*}+1}-x^{k_{*}}\|]\le  \frac{L\sqrt{d}}{2\sigma_{k_*}}\left(\frac{72\sigma_{k_*}}{L\sqrt{d}T}\left[f_{\gamma}(x^0) - f_*+ D_6\right]\right)^{\frac{1}{3}}\\
 &\le 3L^{\frac{2}{3}}d^{\frac{1}{3}}\left( f_{\gamma}(x^0) - f_*+ D_6\right)^{\frac{1}{3}}\left(\frac{1}{\sigma_{k_{*}}^{2}T}\right)^{\frac{1}{3}}\le 3L^{\frac{2}{3}}d^{\frac{1}{3}}\left( f_{\gamma}(x^0) - f_*+ D_6\right)^{\frac{1}{3}}\left(\frac{1}{\sigma_{T}^{2}T}\right)^{\frac{1}{3}}\\
 &\le3L^{\frac{2}{3}}d^{\frac{1}{3}}\gamma^{-\frac{2}{3}}\left( f_{\gamma}(x^0) - f_*+ D_6\right)^{\frac{1}{3}}T^{-\frac{1-2\rho}{3}},
 \end{align*}
 where the second inequality follows from \eqref{bd_diff_xk} and Jensen's inequality.
\end{proof}

We are now ready to present our complexity result for obtaining $(\epsilon_1,\epsilon_2,\delta)$-second-order stationary points. Our result is based on an assumption on moment bounds; see \eqref{bound_assumption}. We will discuss a sufficient condition for \eqref{bound_assumption} in Proposition~\ref{coro082801} below.

\begin{theorem}[Complexity for $(\epsilon_1,\epsilon_2,\delta)$-second-order stationarity]\label{thm:main_complexity}
   Consider problem~\eqref{problem} and suppose that Assumption~\ref{Ass_para} holds. Let $\{x^k\}$ be the sequence generated by Algorithm~\ref{algorithm 01}. Assume that there exists $\theta >0$ such that
   \begin{equation}\label{bound_assumption}
   \sup_{T\ge 1}\bigg\{\mathbb{E}[\|x^{\hat k}\|^{\theta}]:\; \hat k=\max\{l:l\in\argmin_{\lfloor\frac{T}{2}\rfloor\le  k \le T}\sigma_{k}^{-1}\mathbb{E}\left[\|x^{k+1}-x^{k}\|^{3}\right]\}\bigg\}\le \widetilde M_D
   \end{equation}
 for some $\widetilde M_D\in (0,\infty)$.
Then for any $\delta\in(0,1)$ and any $T \ge 3\left[\frac{\gamma \sqrt{d\pi e}}{\delta (4L)^{-\frac{1}{d}}\min \{5L,1\}^{\frac{1}{d}}}\right]^{\frac{1}{\rho}}+2$, it holds that\footnote{See footnote \ref{footnotehaha} for the well-definedness of the expectation in \eqref{iter_complexity2}.}
  \begin{align}
  & \mathbb{E}\left[\|\nabla f(x^{k_{*}})\|^{\frac{1}{6\lceil 1/\theta\rceil}}\right]
  \le 2\left(1+\sqrt{\widetilde{M}_{D}}\right)\left(\lambda_1 T^{-\rho} + \lambda_2 T^{-\frac{1+\rho}{3}}\right)^{\frac{1}{6\lceil 1/\theta\rceil}},\label{iter_complexity1}\\
 &\mathbb{E}\left[\max\left\{0,-\inf_{\|h\|=1}\suppf(h,\partial_{G,\delta}^{2}f(x^{k_{*}})h)\right\}^{\frac{1}{6\lceil 1/\theta\rceil}}\right]\notag\\
 &~~~~~~~~~~~~~~~~~~~~~~~~~~~
    \le 2\left(1+\sqrt{\widetilde{M}_{D}}\right)\left(\lambda_3T^{-d\rho}
   +\lambda_4T^{-\frac{1-2\rho}{3}}\right)^{\frac{1}{6\lceil 1/\theta\rceil}} \label{iter_complexity2},
  \end{align}
where $k_{*}:=\max\{l:l\in\argmin_{\lfloor\frac{T}{2}\rfloor\le  k \le T}\sigma_{k}^{-1}\mathbb{E}\left[\|x^{k+1}-x^{k}\|^{3}\right]\}$,
\begin{align*}
    & \lambda_1 := 3^\rho\gamma L\sqrt{d}, \\
    &\lambda_2 := \frac{3\gamma^{\frac{1}{3}}L^{\frac{2}{3}}(d+9)^2}{d^{\frac{1}{6}}}\left(f_{\gamma}(x^0) - f_*+ D_6\right)^{\frac{1}{3}}  + 9L^{\frac{1}{3}}\gamma^{-\frac{1}{3}}d^{\frac{1}{6}}\left(f_{\gamma}(x^0) - f_*+ D_6\right)^{\frac{2}{3}}, \\
    &\lambda_3 := 8L\cdot 3^{d\rho}\gamma^d (d\pi e)^{\frac{d}{2}}\delta^{-d}, \\
    &\lambda_4 := 3L^{\frac{2}{3}}d^{\frac{1}{3}}\gamma^{-\frac{2}{3}}\left( f_{\gamma}(x^0) - f_*+ D_6\right)^{\frac{1}{3}},
\end{align*}
and $D_6$ is defined in Lemma~\ref{lem042301}.
\end{theorem}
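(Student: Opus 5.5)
The plan is to combine Theorem~\ref{thm110401}, Theorem~\ref{lem071501} and Theorem~\ref{theorem_hessian}(ii) (in the simplified form of Remark~\ref{remark2025092201}), and then to remove the weight $(1+\|x^{k_*}\|)^{-3}$ using Lemma~\ref{lem082701} with $n=3$ and the moment bound \eqref{bound_assumption}. Since $2\lceil 3/\theta\rceil\le 6\lceil 1/\theta\rceil$ (because $\lceil 3/\theta\rceil\le 3\lceil 1/\theta\rceil$), the exponent $1/(6\lceil 1/\theta\rceil)$ in the statement is no larger than the one Lemma~\ref{lem082701} delivers. We can therefore apply Lemma~\ref{lem082701} with $n=3$ and obtain the bound for the exponent $1/(2\lceil 3/\theta\rceil)$. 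Alternatively, and more cleanly, we apply Lemma~\ref{lem082701} with $\theta$ replaced by a $\theta'\le \theta$ such that $\lceil 3/\theta'\rceil = 3\lceil 1/\theta\rceil$, for instance $\theta' = 1/\lceil 1/\theta\rceil$. Here we use $\E[\|x\|^{\theta'}]\le (\E[\|x\|^\theta])^{\theta'/\theta}\le \max\{1,\widetilde M_D\}$ by Jensen; a little care with constants is needed, and I would replace $\widetilde M_D$ by $\max\{1,\widetilde M_D\}$ if necessary, or accept a slightly adjusted constant. The factor $2$ in front of $(1+\sqrt{\widetilde M_D})$ should come from $2^{1/(6\lceil 1/\theta\rceil)}\le 2$.

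\textbf{Gradient bound.} By Lemma~\ref{lem053101} and Theorem~\ref{twice_diff_lip}-type arguments, $\nabla f_\sigma(x)=\E[\nabla f(x+\sigma u)]$, so $\|\nabla f(x)-\nabla f_\sigma(x)\|\le L\sigma\E\|u\|\le L\sigma\sqrt d$. Hence
\[
\frac{\|\nabla f(x^{k_*})\|}{(1+\|x^{k_*}\|)^3}\le L\sqrt d\,\sigma_{k_*}+\frac{\|\nabla f_{\sigma_{k_*}}(x^{k_*})\|}{(1+\|x^{k_*}\|)^3}.
\]
Since $k_*\ge \lfloor T/2\rfloor\ge T/3$, we have $\sigma_{k_*}\le \gamma 3^\rho T^{-\rho}$, which produces the term $\lambda_1T^{-\rho}$. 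Taking expectations and invoking Theorem~\ref{thm110401} gives $\E[g(x^{k_*})/(1+\|x^{k_*}\|)^3]\le \lambda_1T^{-\rho}+\lambda_2T^{-(1+\rho)/3}=:\alpha_c$ with $g=\|\nabla f(\cdot)\|$, which is continuous and nonnegative. Lemma~\ref{lem082701} then yields \eqref{iter_complexity1}.

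\textbf{Curvature bound.} Set $\epsilon:=\lambda_3T^{-d\rho}/(2\cdot 8)\cdot 8$; more precisely, choose $\epsilon$ so that $2\epsilon$ matches $\lambda_3 T^{-d\rho}$. Concretely, I would invert \eqref{2025092201}: the condition $\sigma\le [\delta(4L)^{-1/d}/\sqrt{d\pi e}]\epsilon^{1/d}$ is equivalent to $\epsilon\ge 4L\sigma^d(d\pi e)^{d/2}\delta^{-d}$. With $\sigma=\sigma_{k_*}\le 3^\rho\gamma T^{-\rho}$, the choice $\epsilon:=4L\,3^{d\rho}\gamma^d(d\pi e)^{d/2}\delta^{-d}T^{-d\rho}$ gives $2\epsilon=\lambda_3T^{-d\rho}$. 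The lower bound on $T$ in the statement is exactly what guarantees $\epsilon<\min\{5L,1\}$, so Remark~\ref{remark2025092201} applies. Here $\epsilon$ is deterministic, which is important because $k_*$ is deterministic by definition, although $x^{k_*}$ is random. Theorem~\ref{theorem_hessian}(ii) then gives
\[
\max\Bigl\{0,-\inf_{\|h\|=1}\suppf(h,\partial^2_{G,\delta}f(x^{k_*})h)\Bigr\}\le 2\epsilon+\max\{0,-\lambda_{\min}(\nabla^2 f_{\sigma_{k_*}}(x^{k_*}))\}.
\]
Dividing by $(1+\|x^{k_*}\|)^3\ge1$, taking expectations and using Theorem~\ref{lem071501}, we get $\alpha_c=\lambda_3T^{-d\rho}+\lambda_4T^{-(1-2\rho)/3}$.

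\textbf{Main obstacle.} The main obstacle is verifying that $x\mapsto \max\{0,-\inf_{\|h\|=1}\suppf(h,\partial^2_{G,\delta}f(x)h)\}$ is lower semicontinuous, or at least measurable, so that Lemma~\ref{lem082701} applies and the expectation is well defined. I would argue via outer semicontinuity of $x\mapsto\partial^2_{G,\delta}f(x)$, using closedness of the graph of $\partial^2_H f$ together with the compactness in Remark~\ref{remark3}. This makes $x\mapsto \inf_h \max_{M}\langle h,Mh\rangle$ upper semicontinuous, and hence its negative part is lower semicontinuous.
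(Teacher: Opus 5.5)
Your skeleton matches the paper's: the same deterministic $\epsilon_T=4L\,3^{d\rho}\gamma^d(d\pi e)^{d/2}\delta^{-d}T^{-d\rho}$ fed into Remark~\ref{remark2025092201}, the same bound $\|\nabla f-\nabla f_\sigma\|\le L\sqrt d\,\sigma$, Theorems~\ref{thm110401} and~\ref{lem071501}, and the same lower-semicontinuity argument. The one step where you deviate, removing the weight via Lemma~\ref{lem082701} with $n=3$, does not go through as written. The lemma's hypothesis is a bound on $\E[g(x)/(1+\|x\|^{3})]$, but you only bound $\E[g(x)/(1+\|x\|)^{3}]$ by $\alpha_c$. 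Since $1+\|x\|^3\le(1+\|x\|)^3$, the former is the \emph{larger} quantity, so your $\alpha_c$ cannot be used as the lemma's $\alpha_c$.

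The step is repairable. From $(1+t)^3\le 4(1+t^3)$ the hypothesis holds with $4\alpha_c$. With $p=1/(2\lceil 3/\theta\rceil)$ the lemma gives $\E[g^p]\le(1+\sqrt{\widetilde M_D})(8\alpha_c)^p$. Lyapunov's inequality with $q=1/(6\lceil1/\theta\rceil)\le p$ then gives $\E[g^q]\le(1+\sqrt{\widetilde M_D})^{q/p}(8\alpha_c)^q\le 2(1+\sqrt{\widetilde M_D})\,\alpha_c^q$, since $8^q\le 8^{1/6}<2$. The stated constant therefore survives, but only because of the exponent reduction, not via $2^{1/(6\lceil1/\theta\rceil)}\le 2$ as you wrote. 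Drop your ``cleaner'' alternative with $\theta'=1/\lceil1/\theta\rceil$. It yields the factor $1+\widetilde M_D^{\theta'/(2\theta)}$ in place of $1+\sqrt{\widetilde M_D}$, which is strictly worse when $\widetilde M_D<1$, so it does not prove the statement as given.

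The paper avoids all of this with one Jensen step first: $\E[g^{1/3}/(1+\|x\|)]\le(\E[g/(1+\|x\|)^3])^{1/3}\le\alpha_c^{1/3}$. It then applies Lemma~\ref{lem082701} with $n=1$ to the nonnegative lower semicontinuous function $g^{1/3}$. With $n=1$ the weight $1+\|x\|^n$ is exactly $1+\|x\|$, so no conversion constant appears. The exponent $\frac13\cdot\frac{1}{2\lceil1/\theta\rceil}$ equals $\frac{1}{6\lceil1/\theta\rceil}$, which is where the form of the statement comes from. The resulting factor $2^{1/(2\lceil1/\theta\rceil)}\le 2$ gives the constant directly.
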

\begin{proof}
  For each $T\ge 1$ and $\delta > 0$, define 
  \begin{align}
      \epsilon_T := 3^{d\rho}\gamma^d (d\pi e)^{\frac{d}{2}}(\delta^{-d}(4L) )T^{-d\rho}.\label{def_epsilon}
  \end{align}
  Then for any 
  \[
    T \ge 3\left[\frac{\gamma \sqrt{d\pi e}}{\delta (4L)^{-\frac{1}{d}}\min \{5L,1\}^{\frac{1}{d}}}\right]^{\frac{1}{\rho}}+2
  \]
  and $\delta \in (0,1)$, we have $0< \epsilon_T < \min\{5L,1 \}$ and 
  \[
  \sigma_{k_*}\le \sigma_{\lfloor\frac{T}2\rfloor}
   = \gamma (\lfloor T/2\rfloor)^{-\rho} \le \gamma 3^\rho T^{-\rho}= (d\pi e)^{-\frac12}\delta(4L)^{-\frac1d}\epsilon_T^\frac1d,
  \]
  where we used the fact that $\lfloor\frac{T}2\rfloor\ge \frac{T}3$ as $T\ge 2$ for the second inequality, and we used the definition of $\epsilon_T$ for the last equality;
  this means that $\sigma_{k_*}$ satisfies \eqref{2025092201} with $\epsilon_T$ in place of $\epsilon$.
  
 Thus, by Remark~\ref{remark2025092201} and the fact that $2 \le 2(1+ \|x^{k_*}\|)$, we obtain that
  \begin{align*}
    2(1+\|x^{k_{*}}\|)\epsilon_T+[-\lambda_{\min}(\nabla^{2}f_{\sigma_{k_{*}}}(x^{k_{*}}))]\ge -\inf_{\|h\|=1}\suppf(h,\partial_{G,\delta}^{2}f(x^{k_{*}})h).
  \end{align*}
  We can further deduce from the above inequality that
  \begin{align*}
    2(1+\|x^{k_{*}}\|)^{3}\epsilon_T+\max\{0,-\lambda_{\min}(\nabla^{2}f_{\sigma_{k_{*}}}(x^{k_{*}}))\}\ge\max\left\{0, -\inf_{\|h\|=1}\suppf(h,\partial_{G,\delta}^{2}f(x^{k_{*}})h)\right\}.
  \end{align*}
  This together with Jensen's inequality implies that
  \begin{align}
   &\mathbb{E}\left[\frac{\max\{0,-\inf\limits_{\|h\|=1}\suppf(h,\partial_{G,\delta}^{2}f(x^{k_{*}})h)\}^{1/3}}{1+\|x^{k_{*}}\|}\right]  \notag\\
   &\le \left(\mathbb{E}\left[\frac{\max\{0,-\inf\limits_{\|h\|=1}\suppf(h,\partial_{G,\delta}^{2}f(x^{k_{*}})h)\}}{(1+\|x^{k_{*}}\|)^{3}}\right] \right)^{\frac{1}{3}}\notag\\
   &\le \left(2\epsilon_T
   +\mathbb{E}\left[\frac{\max\{0,-\lambda_{\min}(\nabla^{2}f_{\sigma_{k_{*}}}(x^{k_{*}}))\}}{(1+\|x^{k_{*}}\|)^{3}}\right]\right)^{\frac{1}{3}}, \label{bd_second_condition}
  \end{align}
  where the first expectation above is well-defined because $x\mapsto \max\{0,-\inf\limits_{\|h\|=1}\suppf(h,\partial_{G,\delta}^{2}f(x)h)\}$ is lower semicontinuous.\footnote{To see this, recall from \cite[Proposition~2.6.2]{Clarke1983} that $x\mapsto \partial_H^2f(x)$ is upper semicontinuous in the sense of \cite[Proposition~2.6.2(c)]{Clarke1983}, and is locally uniformly bounded. Then it is routine to show that $x\mapsto \partial^2_{G,\delta}f(x)$ is closed in the sense of \cite[Page~34]{Aubin90}. In view of the remark on \cite[Page~42]{Aubin90}, we conclude that $x\mapsto \partial^2_{G,\delta}f(x)$ is upper semicontinuous in the sense of \cite[Definition~1.4.1]{Aubin90}. This together with \cite[Theorem~1.4.16]{Aubin90} gives the upper semicontinuity of $x\mapsto \suppf(h,\partial_{G,\delta}^{2}f(x)h)$. The lower semicontinuity of $x\mapsto \max\{0,-\inf\limits_{\|h\|=1}\suppf(h,\partial_{G,\delta}^{2}f(x)h)\}$ now follows immediately.\label{footnotehaha}} 
Using \eqref{bd_second_condition}, we have upon invoking Lemma~\ref{lem082701}
and recalling the definition of $\widetilde M_D$ that
 \begin{align}
      &\mathbb{E}\left[\max\left\{0,-\inf\limits_{\|h\|=1}\suppf(h,\partial_{G,\delta}^{2}f(x^{k_{*}})h)\right\}^{\frac{1}{6\lceil 1/\theta\rceil}}\right]  \nonumber \\
      &\le 2\left(1+\sqrt{\widetilde{M}_{D}}\right)
      \left(2\epsilon_T
   +\mathbb{E}\left[\frac{\max\{0,-\lambda_{\min}(\nabla^{2}f_{\sigma_{k_{*}}}(x^{k_{*}}))\}}{(1+\|x^{k_{*}}\|)^{3}}\right]\right)^{\frac{1}{6\lceil 1/\theta\rceil}}. \label{bd_support}
  \end{align}
  The relation \eqref{iter_complexity2} now follows immediately from the above display upon invoking Theorem~\ref{lem071501}.
  
  On the other hand, notice that
 \begin{align}
    \|\nabla f(x^{k_{*}})\|&=\|\nabla f(x^{k_{*}})-\nabla f_{\sigma_{k_{*}}}(x^{k_{*}})+\nabla f_{\sigma_{k_{*}}}(x^{k_{*}})\|\notag\\
    &\le\|\nabla f(x^{k_{*}})-\nabla f_{\sigma_{k_{*}}}(x^{k_{*}})\|+\|\nabla f_{\sigma_{k_{*}}}(x^{k_{*}})\|\notag\\
    &=\|\mathbb{E}_{u\sim \mathcal{N}(0,I)}[\nabla f(x^{k_{*}}+\sigma_{k_{*}} u)-\nabla f(x^{k_{*}})]\|+\|\nabla f_{\sigma_{k_{*}}}(x^{k_{*}})\|\notag\\
    &\le\mathbb{E}_{u\sim \mathcal{N}(0,I)}[\|\nabla f(x^{k_{*}}+\sigma_{k_{*}} u)-\nabla f(x^{k_{*}})\|]+\|\nabla f_{\sigma_{k_{*}}}(x^{k_{*}})\|\notag\\
    &\le L\sigma_{k_{*}} \mathbb{E}_{u\sim \mathcal{N}(0,I)}[\|u\|]+\|\nabla f_{\sigma_{k_{*}}}(x^{k_{*}})\|
    \le L\sigma_{k_{*}} \sqrt{d}+\|\nabla f_{\sigma_{k_{*}}}(x^{k_{*}})\|, \label{norm_grad_f}
  \end{align}
  where we used \cite[Lemma 1]{NesterovGS} in the last inequality.
  Therefore, we have 
  \begin{align}
      &\mathbb{E}\left[\frac{\|\nabla f(x^{k_{*}})\|^{1/3}}{1+\|x^{k_{*}}\|}\right] \le \left(\mathbb{E}\left[\frac{\|\nabla f(x^{k_{*}})\|}{(1+\|x^{k_{*}}\|)^3}\right] \right)^{\frac{1}{3}} \le \left( L\sqrt{d}\cdot\sigma_{k_{*}}+\E\left[\frac{\|\nabla f_{\sigma_{k_{*}}}(x^{k_{*}})\|}{(1+\|x^{k_{*}}\|)^{3}}\right] \right)^{\frac{1}{3}} \nonumber \\
      &\le \left(L\sqrt{d}\sigma_{\lfloor\frac{T}{2}\rfloor}+ \E\left[\frac{\|\nabla f_{\sigma_{k_{*}}}(x^{k_{*}})\|}{(1+\|x^{k_{*}}\|)^{3}}\right] \right)^{\frac{1}{3}}\le \left(3^\rho\gamma L\sqrt{d}T^{-\rho }+ \E\left[\frac{\|\nabla f_{\sigma_{k_{*}}}(x^{k_{*}})\|}{(1+\|x^{k_{*}}\|)^{3}}\right] \right)^{\frac{1}{3}}, \label{bd_drad_over_x}
  \end{align}
  where the second inequality holds because of \eqref{norm_grad_f} and the last inequality follows from the fact that $\lfloor\frac{T}{2}\rfloor\ge\frac{T}{3}$, which holds because $T\ge2$. Now, using \eqref{bd_drad_over_x}, we see upon invoking Lemma~\ref{lem082701} and the definition of $\widetilde M_D$ that
  \begin{align}
   &\mathbb{E}\left[\|\nabla f(x^{k_{*}})\|^{\frac{1}{6\lceil 1/\theta\rceil}}\right]\le 2\left(1+\sqrt{\widetilde{M}_{D}}\right)
   \left(3^\rho\gamma L\sqrt{d}T^{-\rho }+ \E\left[\frac{\|\nabla f_{\sigma_{k_{*}}}(x^{k_{*}})\|}{(1+\|x^{k_{*}}\|)^{3}}\right] \right)^{\frac{1}{6\lceil1/\theta\rceil}},\label{eq111103}
  \end{align}
  The relation \eqref{iter_complexity1} now follows immediately from the above display upon invoking Theorem~\ref{thm110401}.                                  \end{proof}

We now provide a sufficient condition for \eqref{bound_assumption}. Notice that condition \eqref{quad_growth2} holds for some $\mu_1 > 0$ and $\mu_2 > 0$ if and only if $\liminf_{\|x\|\to \infty}f(x)/\|x\|^\theta > 0$. 
\begin{proposition}[Sufficient condition for \eqref{bound_assumption}]\label{coro082801}
Consider problem~\eqref{problem} and suppose that Assumption~\ref{Ass_para} holds. Assume that there exist $\theta > 0$, $\mu_1>0$ and $\mu_2>0$ such that 
\begin{align}\label{quad_growth2}
  f(x)\ge \mu_1 \|x\|^\theta -\mu_2 \ \ \forall x\in \R^d.
\end{align}
Then the sequence $\{x^k\}$ generated by Algorithm~\ref{algorithm 01} satisfies that, for all $T\ge 1$,
\begin{align}
\mathbb{E}[\|x^{k_{*}}\|^{\theta}]\le \widetilde M_D,\notag
\end{align}
where $k_{*}:=\max\{l:l\in\argmin_{\lfloor\frac{T}{2}\rfloor\le  k \le T}\sigma_{k}^{-1}\mathbb{E}\left[\|x^{k+1}-x^{k}\|^{3}\right]\}$,
\begin{align*}
\widetilde{M}_{D}&:=\max\{2^{\theta-1},1\}\mu_1^{-1}[\mu_1 (\theta+d)^{\frac{\theta}{2}}+\mu_2+f_\gamma(x^0) + D_6] < \infty,
\end{align*}
and $D_6$ is defined in Lemma~\ref{lem042301}.
\end{proposition}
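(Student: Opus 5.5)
The plan is to turn the growth condition \eqref{quad_growth2} into a bound on $\mathbb{E}[\|x^{k_*}\|^\theta]$ through the function-value bound \eqref{bd_f} of Lemma~\ref{lem042301}. That lemma gives $\mathbb{E}[f_{\sigma_k}(x^k)]\le f_\gamma(x^0)+D_6$ for every $k\ge 0$. Since $k_*$ is a deterministic index (it is defined through expectations only), this bound applies at $k=k_*$. It therefore suffices to bound $f_{\sigma}(x)$ from below by a multiple of $\|x\|^\theta$ minus constants, uniformly in $\sigma\le\gamma<1$.

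For the lower bound, I will take expectations in \eqref{quad_growth2}:
\[
f_\sigma(x)=\mathbb{E}_u[f(x+\sigma u)]\ge \mu_1\mathbb{E}_u[\|x+\sigma u\|^\theta]-\mu_2 .
\]
Next I relate $\|x\|^\theta$ to $\|x+\sigma u\|^\theta$ and $\|\sigma u\|^\theta$. From $\|x\|\le\|x+\sigma u\|+\sigma\|u\|$ and the elementary inequality $(a+b)^\theta\le \max\{2^{\theta-1},1\}(a^\theta+b^\theta)$, which covers both $\theta\ge1$ (convexity) and $\theta<1$ (subadditivity), we get
\[
\|x\|^\theta\le c_\theta\bigl(\|x+\sigma u\|^\theta+\sigma^\theta\|u\|^\theta\bigr),\qquad c_\theta:=\max\{2^{\theta-1},1\}.
\]
Taking expectations and using $\sigma\le\gamma<1$ gives $\|x\|^\theta\le c_\theta\bigl(\mathbb{E}\|x+\sigma u\|^\theta+\mathbb{E}\|u\|^\theta\bigr)$. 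The Gaussian moment is bounded by $\mathbb{E}\|u\|^\theta\le(\theta+d)^{\theta/2}$, using \cite[Lemma~1]{NesterovGS} for $\theta\ge2$ and Jensen's inequality ($\mathbb{E}\|u\|^\theta\le d^{\theta/2}$) for $\theta<2$. Combining the displays yields
\[
\mu_1\|x\|^\theta\le c_\theta\bigl(f_\sigma(x)+\mu_2+\mu_1(\theta+d)^{\theta/2}\bigr).
\]

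Finally, I plug in $x=x^{k_*}$ and $\sigma=\sigma_{k_*}$, take total expectation, and use \eqref{bd_f}. This gives
\[
\mathbb{E}\|x^{k_*}\|^\theta\le c_\theta\mu_1^{-1}\bigl[\mu_1(\theta+d)^{\theta/2}+\mu_2+f_\gamma(x^0)+D_6\bigr]=\widetilde M_D .
\]
Finiteness of $\widetilde M_D$ follows because $f_\gamma(x^0)$ is finite by Lemma~\ref{lem053101}.

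The only delicate point is justifying the Gaussian moment bound for all $\theta>0$, including non-integer $\theta<2$ where the cited lemma may not be stated directly. The Jensen fallback above handles that range. The remaining steps are routine.
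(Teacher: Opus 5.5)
Your proposal is correct and follows essentially the same argument as the paper: apply \eqref{bd_f} at the deterministic index $k_*$, then bound $f_{\sigma_{k_*}}(x^{k_*})$ from below using \eqref{quad_growth2}, the inequality $(a+b)^\theta\le\max\{2^{\theta-1},1\}(a^\theta+b^\theta)$, the Gaussian moment bound and $\sigma_{k_*}<1$. Your Jensen fallback for $\theta<2$ is not needed, since \cite[Lemma~1]{NesterovGS} already gives $\mathbb{E}\|u\|^\theta\le d^{\theta/2}\le(\theta+d)^{\theta/2}$ in that range.
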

\begin{proof}
 When $k_* \ge 1$, we can obtain from Lemma \ref{lem042301} that
\begin{align}\label{equationhaha}
&\mathbb{E}[f_{\sigma_{k_{*}}}(x^{k_*})]\le f_\gamma(x^0) + D_6.
\end{align}
Notice that the above inequality also holds when $k_* = 0$.

On the other hand, according to \eqref{quad_growth2}, we have
\begin{align*}
  &f_{\sigma_{k_{*}}}(x^{k_{*}})=\mathbb{E}_{u\sim\mathcal{N}(0,I)}[f(x^{k_{*}}+\sigma_{k_{*}} u)]\ge\mu_1\mathbb{E}_{u\sim\mathcal{N}(0,I)}[\|x^{k_*} +\sigma_{k_*}u\|^{\theta}] -\mu_2\\
  &=\frac{\mu_1}{w_{\theta}}\mathbb{E}_{u\sim\mathcal{N}(0,I)}[w_{\theta}\|x^{k_{*}}+\sigma_{k_{*}} u\|^{\theta}+w_{\theta}\|\sigma_{k_{*}} u\|^{\theta}-w_{\theta}\|\sigma_{k_{*}} u\|^{\theta}]-\mu_2\\
  &\ge\frac{\mu_1}{w_{\theta}}\|x^{k_*}\|^{\theta}-\mu_1 (\theta+d)^{\frac{\theta}{2}}\sigma_{k_{*}}^{\theta}-\mu_2,
\end{align*}
where $w_{\theta}:=\max\{2^{\theta-1},1\}$ and we use the inequality $\|x^{k_{*}}\|^{\theta}\le(\|x^{k_{*}}+\sigma_{k_{*}} u\|+\|\sigma_{k_{*}} u\|)^{\theta}$,  \cite[Exercise 17(a) in Chapter 3]{Rudin1987} and \cite[Lemma 1]{NesterovGS} for the last inequality. This together with $\sigma_k \in (0,1)$ implies that
\begin{align}
 \mathbb{E}[f_{\sigma_{k_{*}}}(x^{k_{*}})] \ge\frac{\mu_1}{w_{\theta}}\mathbb{E}[\|x^{k_*}\|^{\theta}]-\mu_1 (\theta+d)^{\frac{\theta}{2}}-\mu_2.\label{eq082802}
\end{align}
Combining \eqref{equationhaha} and \eqref{eq082802}, one has
\begin{align*}
 \frac{\mu_1}{w_{\theta}}\mathbb{E}[\|x^{k_*}\|^{\theta}]&\le \mu_1 (\theta+d)^{\frac{\theta}{2}}+\mu_2+f_\gamma(x^0) + D_6.
\end{align*}
The desired conclusion now follows immediately from the above display and the definition of $\widetilde M_D$.
\end{proof}